\theoremstyle{plain}
\newtheorem{thm}{Theorem}[section]
\newtheorem{prop}[thm]{Proposition}
\newtheorem{lem}[thm]{Lemma}
\newtheorem{cor}[thm]{Corollary}
\theoremstyle{definition}
\newtheorem{dfn}[thm]{Definition}
\newtheorem{rmk}[thm]{Remark}
\newcommand{\diag}{\mathrm{diag}}
\newcommand{\End}{\mathrm{End}}
\newcommand{\har}{\mathrm{har}}
\newcommand{\Hom}{\mathrm{Hom}}
\newcommand{\ord}{\mathrm{ord}}
\newcommand{\Res}{\mathrm{Res}}
\newcommand{\st}{\mathrm{st}}
\newcommand{\St}{\mathrm{St}}
\newcommand{\src}{\mathrm{src}}
\newcommand{\Sym}{\mathrm{Sym}}
\newcommand{\Kbar}{\bar{K}}
\newcommand{\cL}{\mathcal{L}}
\newcommand{\cO}{\mathcal{O}}
\newcommand{\cT}{\mathcal{T}}
\newcommand{\cV}{\mathcal{V}}
\newcommand{\vep}{\varepsilon}
\newcommand{\bC}{\mathbb{C}}
\newcommand{\bF}{\mathbb{F}}
\newcommand{\bP}{\mathbb{P}}
\newcommand{\bQ}{\mathbb{Q}}
\newcommand{\bZ}{\mathbb{Z}}
\newcommand{\frB}{\mathfrak{B}}
\newcommand{\frem}{\mathfrak{m}}
\newcommand{\frn}{\mathfrak{n}}
\renewcommand{\p@enumii}{}
\begin{document}

\title[$\wp$-adic continuous families of Drinfeld eigenforms]{$\wp$-adic continuous families of Drinfeld 
eigenforms of finite 
slope}
\author{Shin Hattori}
\address[Shin Hattori]{Department of Natural Sciences, Tokyo City University}

\date{\today}


\begin{abstract}
Let $p$ be a rational prime, $v_p$ the normalized $p$-adic valuation on $\bZ$, $q>1$ a $p$-power and $A=\bF_q[t]$. Let $\wp\in A$ be an irreducible polynomial 
and $\frn
\in A$ a non-zero element which is prime to $\wp$. 
Let $k\geq 2$ and $r\geq 1$ be integers. We denote by $S_k(\Gamma_1(\frn\wp^r))$ the space of Drinfeld cuspforms of level $\Gamma_1(\frn\wp^r)$ and weight $k$ 
for $\bF_q(t)$. 
Let $n\geq 1$ be an integer and $a\geq 0$ a rational number. 
Suppose that $\frn\wp$ has a prime factor of degree one and the generalized eigenspace in $S_k(\Gamma_1(\frn\wp^r))$ of slope $a$ is one-dimensional.
In this paper, under an assumption that $a$ is sufficiently small, 
we construct a family $\{F_{k'}\mid v_p(k'-k)\geq \log_p(p^n+a)\}$ of Hecke eigenforms $F_{k'}\in S_{k'}(\Gamma_1(\frn\wp^r))$ of slope $a$ such that, for any 
$Q\in 
A$, 
	the Hecke eigenvalues of $F_k$ and $F_{k'}$ at $Q$ are congruent modulo $\wp^\kappa$ with some $\kappa>p^{v_p(k'-k)}-p^n-a$.
\end{abstract}

\maketitle



\section{Introduction}\label{SecIntro}



Let $p$ be a rational prime, $q>1$ a $p$-power and $\bF_q$ the field of $q$ elements. 
Put $A=\bF_q[t]$ and $K=\bF_q(t)$. Let $\wp\in A$ be an irreducible polynomial of positive degree, 
$\frn$ a non-zero element of $A$ which is prime to $\wp$ and $r\geq 1$ an integer. 
Put $A_r=A/(\wp^r)$ and $\kappa(\wp)=A/(\wp)$.
We 
denote by $K_\wp$ the $\wp$-adic completion of $K$, by $\bC_\wp$ the $\wp$-adic completion of an algebraic closure of $K_\wp$ and 
by $v_\wp:\bC_\wp\to \bQ\cup\{+\infty\}$ the $\wp$-adic 
additive valuation on $\bC_\wp$ normalized as $v_\wp(\wp)=1$.
Similarly, we denote by $K_\infty$ the $(1/t)$-adic completion of $K$ and by $\bC_\infty$ the $(1/t)$-adic completion of an 
algebraic closure of $K_\infty$. 
Let $\Kbar$ be the algebraic closure of $K$ inside $\bC_\infty$ and we fix an embedding of $K$-algebras $\iota_\wp: \bar{K}\to \bC_\wp$. 
For any $x\in \bar{K}$, we define its normalized $\wp$-adic valuation by $v_\wp(\iota_\wp(x))$.
Let $\Omega=\bP^1(\bC_\infty)\setminus \bP^1(K_\infty)$ be the Drinfeld upper half plane, which has a natural structure of a 
rigid 
analytic variety over $K_\infty$.

Let $\Gamma$ be a subgroup of $\mathit{SL}_2(A)$ and $k$ an integer. A Drinfeld modular form of level $\Gamma$ and weight $k$ is a rigid analytic function 
on $\Omega$ satisfying
\[
f\left(\frac{az+b}{cz+d}\right)=(cz+d)^kf(z)\quad \text{for any }\gamma=\begin{pmatrix}
a&b\\c&d
\end{pmatrix}\in \Gamma, z\in \Omega
\]
and a holomorphy condition at cusps. It is considered as a function field analogue of the notion of elliptic modular form. 

Recently, $\wp$-adic properties of Drinfeld modular forms have attracted attention and have been studied actively (for example, 
\cite{BV1,BV0,BV_slope,Goss_v,Ha_HTT, Ha_GVt,PZ,Vincent}). 
However, though we have a highly 
developed theory of $p$-adic analytic families of elliptic eigenforms of finite slope, $\wp$-adic properties of Drinfeld modular forms are much less 
well-understood
compared to the elliptic case. 
One of the difficulties in the Drinfeld case is that,
since the group $\cO_{K_\wp}^\times$ is topologically of infinitely generated, analogues of the completed group ring $\bZ_p[[\bZ_p^\times]]$ 
are not Noetherian, and it seems that we have no good definition of characteristic power series applicable to non-Noetherian base rings, as mentioned in 
\cite[paragraph
before Lemma 
2.3]{Buz_EV}.

In this paper, we will construct families of Drinfeld eigenforms in which Hecke eigenvalues vary in a $\wp$-adically continuous way. For the precise statement, 
we fix some notation. For any $\frem\in A$, we put
\[
\Gamma_1(\frem)=\left\{\gamma \in \mathit{SL}_2(A)\ \middle|\ \gamma\equiv \begin{pmatrix}
1&*\\0&1
\end{pmatrix}\bmod \frem\right\}.
\]
Let $\Theta$ be any subgroup of $1+\wp A_r\subseteq A_r^\times$. We define
\[
\Gamma_0^\Theta(\wp^r)=\left\{\gamma \in \mathit{SL}_2(A)\ \middle|\ \gamma\bmod \wp^r \in \begin{pmatrix}
\Theta&*\\0&\Theta
\end{pmatrix}\right\}\subseteq \Gamma_1(\wp)
\]
and $\Gamma_1^\Theta(\frn,\wp^r)=\Gamma_1(\frn)\cap \Gamma_0^\Theta(\wp^r)$, which satisfies $\Gamma_1^{\{1\}}(\frn,\wp^r)=\Gamma_1(\frn\wp^r)$.

Let $k\geq 2$ be an integer. For any non-zero element $Q\in A$, the Hecke operator $T_Q$ acts on 
the $\bC_\infty$-vector space $S_k(\Gamma_1^\Theta(\frn,\wp^r))$ of Drinfeld cuspforms of level $\Gamma_1^\Theta(\frn,\wp^r)$ and weight $k$. 
The operator $T_\wp$ is also denoted by $U$. 
Since they stabilize an $A$-lattice $\cV_k(A)$ (Proposition \ref{PropInteg}), every eigenvalue of $T_Q$ is integral over $A$. 
The normalized $\wp$-adic valuation of an eigenvalue of $U$ is called slope, and we denote by $d(k,a)$ the dimension of 
the generalized $U$-eigenspace for the eigenvalues of slope $a$. For any Hecke eigenform $F$, its $T_Q$-eigenvalue is denoted by $\lambda_Q(F)$. 
We denote by $v_p$ the $p$-adic valuation on $\bZ$ satisfying $v_p(p)=1$.
Then the main theorem of this paper (Theorem \ref{ThmBF}) gives the following, which we will prove in \S\ref{SubsecBF}.

\begin{thm}\label{ThmMainIntro}
	Suppose that $\frn\wp$ has a prime factor $\pi$ of degree one. Let $n\geq 1$ and $k\geq 2$ be integers. 
	Put $d=[\Gamma_1(\pi):\Gamma_1^\Theta(\frn,\wp^r)]$, $\vep=d(k,0)$ and
	\begin{align*}
	D_2(n,d,\vep)&=\frac{1}{d}\left\{\sqrt{2dp^n+(d-\vep+1)(2d-\vep-1)}-\frac{3}{2}d+\vep\right\},\\
	D(n,d,\vep)&=\min\left\{p^n\left(\frac{4+d p^n -d}{4+2dp^n-2\vep}\right), D_2(n,d,\vep)\right\}.
	\end{align*}
	Let $a$ be any non-negative rational number satisfying 
	\[
	a<\min\{D(n,d,\vep),k-1\}.
	\]
	
	Suppose $d(k,a)=1$. Then, for any integer $k'\geq k$ satisfying 
	\[
	v_p(k'-k)\geq \log_p(p^n+a), 
	\]
	there exists a Hecke eigenform $F_{k'}\in S_{k'}(\Gamma_1^\Theta(\frn,\wp^r))$ of slope $a$
	such that for any $Q$ we have
	\[
	v_\wp(\iota_\wp(\lambda_Q(F_{k'})-\lambda_Q(F_{k})))> p^{v_p(k'-k)}-p^n-a.
	\]
\end{thm}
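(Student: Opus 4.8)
The plan is to transfer the problem to linear algebra over $A$ and its $\wp$-adic completion. By Proposition \ref{PropInteg} every $T_Q$, and in particular $U=T_\wp$, preserves the $A$-lattice $\cV_k(A)$ inside $S_k(\Gamma_1^\Theta(\frn,\wp^r))$, so each operator is represented by a matrix with entries in $A$; the same holds in weight $k'$. I would therefore fix compatible $A$-bases of $\cV_k(A)$ and $\cV_{k'}(A)$ and study the characteristic polynomial $P_k(X)=\det(X-U\mid \cV_k(A))\in A[X]$ of $U$, reading the slope $a$ off the $\wp$-adic Newton polygon of $P_k$ after the embedding $\iota_\wp$. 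The construction of $F_{k'}$ then splits into three tasks: (i) a Hecke-equivariant congruence between the two lattices modulo a high power of $\wp$; (ii) a Newton-polygon argument isolating the slope-$a$ part; and (iii) a Hensel-type lift together with a projection estimate for the eigenvalue congruences.

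For (i), the essential point is to produce, for weights with $v_p(k'-k)=m$, an identification of $\cV_k(A)$ and $\cV_{k'}(A)$ as Hecke modules modulo $\wp^{c}$ with $c$ controlled from below by $p^{m}$. Here the characteristic-$p$ nature of $A$ is decisive: writing $k'-k=p^{m}w$, one has $X^{k'-k}-1=(X^{w}-1)^{p^{m}}$ in any $\bF_q$-algebra, so any dependence of the matrix entries of $U$ and $T_Q$ on the weight through such exponentials contributes $\wp$-adic valuation multiplied by $p^{m}$. This is the exact origin of the factor $p^{v_p(k'-k)}$ in the conclusion. Concretely, I would choose a basis of $\cV_k(A)$ on which the weight appears only as an exponent (via the expansions at a cusp, using that $\frn\wp$ has a degree-one prime $\pi$ so that the relevant expansions and the index $d=[\Gamma_1(\pi):\Gamma_1^\Theta(\frn,\wp^r)]$ are under control), and show that replacing $k$ by $k'$ perturbs the matrices of $U$ and every $T_Q$ only by entries of valuation at least $p^{m}-(p^{n}+a)$; in particular $P_{k'}(X)\equiv P_k(X)$ modulo $\wp^{c}$ with $c>p^{m}-p^{n}-a$.

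For (ii), the hypothesis $d(k,a)=1$ says that $P_k$ has a single, simple root of slope $a$, while $\vep=d(k,0)$ and the total rank of $\cV_k(A)$ bound the remaining low slopes. The role of the thresholds $D_2(n,d,\vep)$ and $D(n,d,\vep)$ is precisely to guarantee, whenever $a<\min\{D(n,d,\vep),k-1\}$, that this slope-$a$ root is separated from the rest of the $\wp$-adic spectrum by a gap that is large compared with $a$: $D_2$ is the relevant root of the quadratic coming from the estimate that bounds how close the neighbouring slopes can be in terms of $d$ and $\vep$, and the auxiliary bound entering $D$ controls the ordinary ($\vep$-dimensional, slope-$0$) part. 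Since the Newton polygon of $P_{k'}$ agrees with that of $P_k$ below the relevant height by the congruence in (i), the same simple slope-$a$ root appears for $P_{k'}$; Hensel's lemma (or Krasner's lemma) lifts it to a root $\lambda$ of $P_{k'}$ with $v_\wp(\iota_\wp(\lambda))=a$, and the bound $a<k-1$ keeps us within the classical range where this root corresponds to an honest eigenform. Diagonalising $U$ on the associated characteristic subspace then produces the Hecke eigenform $F_{k'}$ of slope $a$.

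Finally, for (iii), I would match the slope-$a$ eigenline in weight $k'$ with that in weight $k$ through the mod-$\wp^{c}$ identification of (i) and measure the eigenvalue discrepancy. Because $U$ separates the slope-$a$ line from the complementary characteristic subspaces with a gap bounded below in terms of $a$, the projection onto the slope-$a$ eigenline loses at most $p^{n}+a$ from the congruence exponent $c$, and one obtains $v_\wp(\iota_\wp(\lambda_Q(F_{k'})-\lambda_Q(F_k)))>p^{m}-p^{n}-a$ for every $Q$ simultaneously, as claimed. I expect the genuine difficulty to lie in (i) and (ii) jointly: making the weight congruence quantitatively sharp (so that the exponent is $p^{v_p(k'-k)}$ up to the correction $p^{n}+a$, and not merely $p^{v_p(k'-k)}$ up to an uncontrolled loss) and simultaneously controlling the Newton polygon of $P_{k'}$ using only the coarse data $d$ and $\vep$. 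The precise shapes of $D_2(n,d,\vep)$ and $D(n,d,\vep)$ are exactly the output of optimising this Newton-polygon separation against the available congruence.
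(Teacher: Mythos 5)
Your overall architecture (a characteristic-$p$ Frobenius congruence between weights, Newton-polygon rigidity via the elementary divisors of $U$, then an eigenvalue comparison) matches the paper's in outline, and your identification of $(aX+cY)^{p^m}\equiv (aX)^{p^m}\bmod \wp^{p^m}$ as the source of the exponent $p^{v_p(k'-k)}$ is exactly right. But there are two genuine gaps. First, in (i) you cannot identify $\cV_k(A)$ and $\cV_{k'}(A)$ as Hecke modules modulo $\wp^c$, nor assert $P_{k'}\equiv P_k$: the lattices have ranks $d(k-1)$ and $d(k'-1)$, so the characteristic polynomials have different degrees. The paper instead uses the surjective weight-reduction map $\rho_{k,p^m}$, whose kernel $\cV_{k'}^{<p^m}$ is Hecke-stable and on which $U$ acts with all slopes at least $k-1$ (Lemma \ref{LemPreGliss}); this is where the hypothesis $a<k-1$ enters, and it is why the comparison of Newton polygons only holds after discarding a block of high slope (Corollary \ref{CorWindow}), not by a termwise congruence of $P_k$ and $P_{k'}$. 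Relatedly, the thresholds $D$ and $D_2$ are not ``spectral gap'' constants isolating the slope-$a$ root from its neighbours; they are lower bounds for the constant $C(n,d,\vep)$ of Proposition \ref{PropPerturb}, which comes from Kedlaya's bound on how much the coefficients of $\det(I-BX)$ can move under a $\wp^{p^n}$-perturbation given the elementary divisors of $B$. Nothing in the hypotheses prevents $U^{(k)}$ from having eigenvalues of slope arbitrarily close to $a$.

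The second and more serious gap is step (iii). Your projection estimate rests on the premise that $U$ separates the slope-$a$ line from the complementary characteristic subspaces by a gap controlled by $a$, and that the projection therefore loses at most $p^n+a$; neither claim is justified, and the first is false in general for the reason just given. The paper's argument here is of a different nature: after reducing $F_{k'}$ to a mod-$\wp^{p^m}$ eigenvector $G$ in weight $k$, one writes $G=\alpha F_k+\beta H$ for a suitable complement $H$ and runs a dichotomy on $b=v_\wp(\beta)$. If $b$ is large, $G$ is close to the line $\cO_E F_k$ and the eigenvalue congruence follows directly (the loss of $a$ coming from $v_\wp(G)\leq a$, which is itself proved using the block structure of $U^{(k')}$ and $a<k-1$). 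If $b$ is small, one constructs a matrix that is $\wp^{p^n}$-close to $U^{(k)}$ but visibly has two eigenvalues of slope $a$, contradicting $d(k,a)=1$ via a second application of Proposition \ref{PropPerturb} at the auxiliary level $n$ (this is why the theorem needs both $a<D(m,d,\vep)$ with $m=v_p(k'-k)$ and $a<D(n,d,\vep)$, and why the final bound is $p^m-p^n-a$ rather than $p^m-a$). Without this dichotomy, or some substitute for it, you have no way to relate the simultaneous Hecke eigensystem of $G$ to that of $F_k$; a $U$-eigenvector mod $\wp^{p^m}$ of slope $a$ need not be congruent to a multiple of $F_k$ to any useful depth just because $d(k,a)=1$.
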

In fact, what we will prove allow nebentypus characters at $\wp$ (Remark \ref{RmkMain0}).

For example, in the case of $\frn=1$, $\wp=t$ and $r=1$, we have $\Gamma_1^\Theta(\frn,\wp^r)=\Gamma_1(t)$, $d=\vep=1$ and $D(n,1,1)=\sqrt{2p^n}-\frac{1}{2}$.
In this case, Theorem \ref{ThmMainIntro} implies that, for any Hecke eigenform $F_k$ of slope zero in $S_k(\Gamma_1(t))$,
the $T_Q$-eigenvalue $\lambda_Q(F_k)$ is $t$-adically arbitrarily close to those coming from Hecke eigenforms with $A$-expansion \cite{Pet},
which shows $\lambda_Q(F_k)=1$ for any $Q$ (Proposition \ref{PropOrdt}). 
This suggests that, though we will prove constancy results of the dimension of slope zero cuspforms with respect to $k$ and $r$ (Proposition 
\ref{PropOrdAgree} and 
Proposition \ref{PropHida}), Hida theory for the level $\Gamma_0(t^r)$ should be trivial (Remark \ref{RmkHida}).
We also note that families constructed in Theorem \ref{ThmMainIntro} contain Hecke eigenforms whose Hecke eigenvalue at $Q$ is not a power of $Q$ (\S
\ref{SubsecEx}), 
and thus they capture a more subtle $\wp$-adic structure of Hecke eigenvalues than the theory of $A$-expansions.

Let us explain the idea of the proof of Theorem \ref{ThmMainIntro}. Note that a usual method to construct $p$-adic families of eigenforms of finite slope in 
the 
number field 
case is the use of the Riesz theory \cite{Col_Banach,Buz_EV}, which is not available for our case at present, due to the lack of a notion of characteristic 
power series over non-Noetherian Banach algebras. Instead, we follow an idea of Buzzard \cite{Buz_DQA} by which he constructed $p$-adically continuous families 
of quaternionic 
eigenforms 
over $\bQ$.

First we will prove a variant of the Gouv\^{e}a-Mazur conjecture (Proposition \ref{PropPerturb}),
which implies $d(k,a)=d(k',a)$ if $k$ and $k'$ are highly congruent $p$-adically and $a$ is 
sufficiently small. With the assumption $d(k,a)=1$, it produces Hecke eigenforms $F_k$ and $F_{k'}$ of slope $a$ in weights $k$ and $k'$, 
respectively. 
For this part, 
we employ the same idea as in \cite{Ha_GVt}: a lower bound of elementary divisors of the representing matrix of $U$ with some basis 
and a perturbation lemma \cite[Theorem 4.4.2]{Ked} yield the equality. 
To obtain such a bound (Corollary \ref{CorElDiv}), we need to define Hecke operators acting on the 
Steinberg complex 
(\ref{EqnExactVLL}) with 
respect to $\Gamma_1^\Theta(\frn,\wp^r)$, which is done in \S\ref{SubsecHecke}. Note that similar Hecke operators on a Steinberg complex in an adelic setting 
are
given in \cite[\S6.4]{Boeckle}.

Then, a weight reduction map (\S\ref{SubsecWtRed}) yields a Drinfeld cuspform $G$ of weight $k$ such that, for $m=v_p(k'-k)$, the element
 $G\bmod \wp^{p^m}$ is a Hecke eigenform with the same
eigenvalues as those of $F_{k'} \bmod \wp^{p^m}$. Now the point is that, if two lines generated by $F_k$ and $G$ are highly congruent in some 
sense, then we 
can show that the eigenvalues of $F_k$ and $G \bmod \wp^{p^m}$ are also highly congruent, which gives Theorem \ref{ThmMainIntro}; otherwise the two lines are so 
far apart that, again by the Gouv\^{e}a-Mazur variant mentioned above, they produce $U$-eigenvalues of slope $a$ with multiplicity 
more than one, which contradicts 
$d(k,a)=1$ (Theorem \ref{ThmBF}).

\subsection*{Acknowledgements} The author would like to thank Gebhard B\"{o}ckle for suggesting him to look for $\wp$-adically continuous families of Drinfeld 
eigenforms instead of $\wp$-adically analytic ones, and David Goss for a helpful discussion. This work was supported by JSPS KAKENHI Grant Number JP17K05177.



\section{Drinfeld cuspforms via the Steinberg module}\label{SecDCF}


For any arithmetic subgroup $\Gamma$ of $\mathit{SL}_2(A)$ and any integer $k\geq2$, we denote by $S_k(\Gamma)$ the space of Drinfeld cuspforms of level $\Gamma
$ 
and weight $k$.
In this section, we first recall an interpretation of $S_k(\Gamma)$ using the Steinberg module due to Teitelbaum \cite[p.~506]{Tei}, 
following the normalization of \cite[\S5]{Boeckle}. We also introduce Hecke operators acting on the Steinberg complex. Using them, we define an $A$-lattice of 
the space of Drinfeld cuspforms which is 
stable under the Hecke action.


\subsection{Steinberg module}\label{SubsecSt}


For any $A$-algebra $B$, we consider $B^2$ as the set of row vectors, and define a left action $\circ$ of $\mathit{GL}_2(B)$ on it by $\gamma\circ x=x
\gamma^{-1}$. 
Let $\cT$ be the Bruhat-Tits tree for $\mathit{SL}_2(K_\infty)$. We denote by $\cT_0$ the set of vertices of $\cT$, 
which is the set of $K_\infty^\times$-equivalence classes of $\cO_{K_\infty}$-lattices in $K_\infty^2$, 
and by $\cT_1$ the set of its edges. The oriented graph associated with $\cT$ and the 
set of 
oriented edges are denoted by $\cT^o$ and $\cT^o_1$, respectively. For any oriented edge $e$, we denote its origin by $o(e)$, 
its terminus by $t(e)$ and the opposite edge by $-e$. The group $\{\pm 1\}$ acts on $\cT^o_1$ by $(-1)e=-e$.

Let $\Gamma$ be an arithmetic subgroup of $\mathit{SL}_2(A)$ \cite[\S3.4]{Boeckle}, and we assume $\Gamma$ to be $p'$-torsion free (namely, every element of $
\Gamma$ of finite order has $p$-power order). 
The group $
\Gamma$ acts on $\cT$ and $\cT^o$ via the natural inclusion $\Gamma\to \mathit{GL}_2(K_\infty)$. We say a vertex or an oriented edge of $\cT$ is $\Gamma$-stable 
if its stabilizer subgroup in $\Gamma$ is trivial, and $\Gamma$-unstable otherwise. 
We denote by $\cT_0^\st$ and $\cT_1^{o,
\st}$ the subsets of $\Gamma$-stable elements. 
For any $\Gamma$-unstable vertex $v$, its stabilizer subgroup in $\Gamma$ is a non-trivial finite $p$-group and thus fixes a unique rational end which we denote 
by 
$b(v)$ 
\cite[Ch.~II, \S2.9]{Se}.

For any ring $R$ and any set $S$, we write $R[S]$ for the free $R$-module with basis $\{[s]\mid s\in S\}$. When $S$ admits a left action of $\Gamma$, the $R$-
module $R[S]$ 
also admits a natural left action of the group ring $R[\Gamma]$ which we denote by $\circ$. In this case, we also define a right action of $\Gamma$ on $R[S]$ by 
$[s]|_{\gamma}=\gamma^{-1}
\circ [s]$, which makes it a right $R[\Gamma]$-module.

Put
\[
\bZ[\bar{\cT}_1^{o,\st}]=\bZ[\cT_1^{o,\st}]/\langle [e]+[-e]\mid e\in \cT_1^{o,\st}\rangle.
\]
We define a surjection of $\bZ[\Gamma]$-modules 
$\partial_\Gamma: \bZ[\cT_1^{o,\st}]\to \bZ[\cT_0^\st]$ by $\partial_\Gamma(e)=[t(e)]-[o(e)]$, 
where we put $[v]=0$ in $\bZ[\cT_0^\st]$ for any $\Gamma$-unstable vertex $v$. 
It factors as $\partial_\Gamma:\bZ[\bar{\cT}_1^{o,\st}]\to \bZ[\cT_0^\st]$. Note that the both sides of this map are free 
left $\bZ[\Gamma]$-modules of finite rank. 

We define the Steinberg module 
$\St$ as the kernel of the natural augmentation map
\[
\bZ[\bP^1(K)]\to \bZ,
\]
on which the group $\mathit{GL}_2(K)$ acts via
\[
\gamma\circ (x:y)=(x:y)\gamma^{-1},\quad (x:y)\in\bP^1(K). 
\]
We consider it as a left $\bZ[\Gamma]$-module via the natural inclusion
$\Gamma\to \mathit{GL}_2(K)$.
Then the Steinberg module $\St$ is 
a finitely generated projective $\bZ[\Gamma]$-module which sits in the split exact sequence
\begin{equation}\label{EqnExactSt}
\xymatrix{
	0 \ar[r] & \St \ar[r] & \bZ[\bar{\cT}_1^{o,\st}]\ar[r]^{\partial_\Gamma} & \bZ[\cT_0^\st] \ar[r] & 0.
}
\end{equation}
We consider these three left $\bZ[\Gamma]$-modules as right $\bZ[\Gamma]$-modules via the action $[s]\mapsto [s]|_\gamma$.


\subsection{Drinfeld cuspforms and harmonic cocycles}\label{SubsecHC}


For any integer $k\geq 2$ and any $A$-algebra $B$, we denote by $H_{k-2}(B)$ the $B$-submodule of the polynomial ring $B[X,Y]$ consisting of homogeneous 
polynomials 
of degree $k-2$. We consider the left action of the multiplicative monoid $M_2(B)$ on $H_{k-2}(B)$ defined by $(\gamma\circ X,\gamma\circ Y)=(X,Y)\gamma$. 
On $\mathit{GL}_2(B)$, it agrees 
with 
the natural left action on $\Sym^k(\Hom_B(B^2,B))$ induced by the action $\circ$ on $B^2$ after identifying $(X,Y)$ with the dual basis for the basis $
((1,0),
(0,1))$ of $B^2$. Put
\[
V_k(B)=\Hom_B(H_{k-2}(B),B).
\]
We denote the dual basis of the free $B$-module $V_k(B)$ with respect to 
the basis $\{X^iY^{k-2-i}\mid 0\leq i\leq k-2\}$ of $H_{k-2}(B)$ by 
\[
\{(X^iY^{k-2-i})^\vee\mid 0\leq i\leq k-2\}.
\]
We also denote by $\circ$ the natural left action of $\mathit{GL}_2(B)$ on $V_k(B)$ induced by that on $H_{k-2}(B)$. For $\gamma=\begin{pmatrix}
a&b\\c&d
\end{pmatrix}\in \mathit{GL}_2(B)$, $P(X,Y)\in H_{k-2}(B)$ and $\omega\in V_k(B)$, this action is given by
\begin{align*}
(\gamma\circ \omega)(P(X,Y))&=\omega(\gamma^{-1}\circ P(X,Y))\\
&=\det(\gamma)^{2-k}\omega(P(dX-cY,-bX+aY))
\end{align*}
as in \cite[p.~51]{Boeckle}. 
The group $\Gamma$ acts on $H_{k-2}(B)$ and $V_k(B)$ via the natural map $\Gamma\to \mathit{GL}_2(B)$. Moreover, the monoid
\[
M^{-1}=\{\xi\in \mathit{GL}_2(K)\mid \xi^{-1}\in M_2(A)\}
\]
acts on $V_k(B)$ by
\[
(\xi\circ \omega)(P(X,Y))=\omega(\xi^{-1}\circ P(X,Y)).
\]

Put $\cV_k(B)=\St\otimes_{\bZ[\Gamma]}V_k(B)$ and
\[
\cL_{1,k}(B)=\bZ[\bar{\cT}_1^{o,\st}]\otimes_{\bZ[\Gamma]}V_k(B),\quad \cL_{0,k}(B)=\bZ[\cT_0^{\st}]\otimes_{\bZ[\Gamma]}
V_k(B).
\]
We have the split exact sequence
\begin{equation}\label{EqnExactVLL}
\xymatrix{
	0 \ar[r] & \cV_k(B) \ar[r] & \cL_{1,k}(B) \ar[r]^{\partial_\Gamma\otimes 1} &\cL_{0,k}(B)\ar[r] & 0
}
\end{equation}
which is functorial on $B$ and compatible with any base change of $B$. Let $B'$ be any $A$-subalgebra of $B$. Since the $\bZ[\Gamma]$-module $\St$ is 
projective, 
the natural maps $\cV_k(B')\to \cV_k(B)$, $\cL_{1,k}(B')\to \cL_{1,k}(B)$ and $\cL_{0,k}(B')\to \cL_{0,k}(B)$ are injective. 

Let $\Lambda_1\subseteq \cT_1^{o,\st}$ be a complete set of 
representatives of 
$\Gamma\backslash \cT_1^{o,\st}/\{\pm 1\}$. By \cite[Ch.~II, \S1.2, Corollary]{Se}, for any element $e\in \cT_1^{o,\st}$ we can write uniquely
\begin{equation}\label{EqnRe}
r(e)=\vep_e\gamma_e e \quad(\vep_e\in\{\pm 1\}, \gamma_e\in \Gamma, r(e)\in \Lambda_1).
\end{equation} 
Note that $r(e)$, $\vep_e$ and $\gamma_e$ depend on the choice of $\Lambda_1$.
The right $\bZ[\Gamma]$-module $\bZ[\bar{\cT}_1^{o,\st}]$ is free with basis $\{[e]\mid e\in \Lambda_1\}$ and thus, for any $A$-algebra $B
$, any element $x$ of $\cL_{1,k}(B)$ can be written uniquely as
\[
x=\sum_{e\in\Lambda_1} [e]\otimes \omega_e, \quad \omega_e\in V_k(B).
\]

\begin{dfn}\label{DfnHC}
	Let $M$ be a module. A map $c:\cT_1^o\to M$ is said to be a harmonic cocycle if the following conditions are satisfied:
	\begin{enumerate}
		\item\label{DfnHC-h} For any $v\in \cT_0$, we have 
		\[
		\sum_{e\in\cT_1^o,\ t(e)=v}c(e)=0.
		\]
		\item\label{DfnHC-o} For any $e\in \cT_1^o$, we have $c(-e)=-c(e)$.
		\end{enumerate}
	\end{dfn}

Any harmonic cocycle $c$ is determined by its values at $\Gamma$-stable edges, as follows. For any $e\in \cT^{o}_1$, an edge $e'\in \cT^{o,\st}_1$ is said to be 
a 
source of $e$ if the following conditions hold:
\begin{itemize}
	\item When $e$ is $\Gamma$-stable, we require $e'=e$.
	\item When $e$ is $\Gamma$-unstable, we require that a vertex $v$ of $e'$ is $\Gamma$-unstable, $e$ lies on the unique half line from $v$ to $b(v)$ and $e$ 
	has 
	the same orientation as $e'$ with respect to this half line.
\end{itemize}
We denote by $\src(e)$ the set of sources of $e$. Then Definition \ref{DfnHC} (\ref{DfnHC-h}) gives
\begin{equation}\label{EqnSrc}
c(e)=\sum_{e'\in \src(e)}c(e').
\end{equation}
Moreover, for any $\gamma\in \Gamma$, we have 
\begin{equation}\label{EqnGammaSrc}
\src(\gamma(e))=\gamma(\src(e)),\quad \src(-e)=-\src(e).
\end{equation}

For any $A$-algebra $B$, we denote by $C_k^\har(\Gamma, B)$ the set of harmonic cocycles $c:\cT_1^o\to V_k(B)$ which is $\Gamma$-equivariant (namely, $c(\gamma
(e))=\gamma\circ c(e)$ for any $\gamma\in \Gamma$ and $e\in \cT_1^o$). For any rigid analytic function $f$ on $\Omega$ and $e\in \cT^o_1$, we can define
an element $\Res(f)(e)\in V_k(\bC_\infty)$, which gives an isomorphism of $\bC_\infty$-vector spaces
\[
\Res_\Gamma: S_k(\Gamma)\to C_k^\har(\Gamma,\bC_\infty),\quad f\mapsto (e\mapsto \Res(f)(e))
\]
(\cite[Theorem 16]{Tei}, see also \cite[Theorem 5.10]{Boeckle}).
By \cite[(17)]{Boeckle}, the slash operator defined by
\[
(f|_k\gamma)(z)=\det(\gamma)^{k-1}(cz+d)^{-k}f\left(\frac{az+b}{cz+d}\right),\quad \gamma=\begin{pmatrix}
	a&b\\c&d
	\end{pmatrix}\in \mathit{GL}_2(K)
\]
satisfies $\Res(f|_k\gamma)(e)=\gamma^{-1}\circ \Res(f)(\gamma(e))$.


On the other hand, the argument in \cite[p.~506]{Tei} shows that for any $A$-algebra $B$, we have a $B$-linear isomorphism
\[
\Phi_\Gamma: C_k^\har(\Gamma,B)\to \cV_k(B), \quad \Phi_\Gamma(c)=\sum_{e\in\Lambda_1} [e]\otimes c(e),
\]
which is independent of the choice of a complete set of representatives $\Lambda_1$.
This implies that, for any morphism $B\to B'$ of $A$-algebras, the natural map
\[
C_k^\har(\Gamma,B)\otimes_B B'\to C_k^\har(\Gamma,B')
\]
is an isomorphism. Moreover, we obtain an isomorphism
\[
\Phi_\Gamma\circ \Res_\Gamma: S_k(\Gamma)\to \cV_k(\bC_\infty).
\]
In particular, for any $A$-subalgebra $B$ of $\bC_\infty$, we have an injection
\[
\cV_k(B)\to \cV_k(\bC_\infty)\simeq S_k(\Gamma).
\]




\subsection{Hecke operators}\label{SubsecHecke}

For any non-zero element $Q\in A$, we have a Hecke operator $T_Q$ 
acting on $S_k(\Gamma)$ defined as follows. Write
\[
\Gamma \begin{pmatrix}
1&0\\0&Q
\end{pmatrix}
\Gamma=\coprod_{i\in I(\Gamma,Q)} \Gamma\xi_i,
\]
where $\{\xi_i\mid i\in I(\Gamma,Q)\}$ is a complete set of representatives of the right coset space $\Gamma\backslash\Gamma \begin{pmatrix}
1&0\\0&Q
\end{pmatrix}
\Gamma$.
For any $f\in S_k(\Gamma)$, we put
\[
T_Qf=\sum_{i\in I(\Gamma,Q)} f|_k\xi_i.
\]


For any $A$-algebra $B$, we define a Hecke operator $T_Q^\har$ on $C_k^\har(\Gamma,B)$ as follows. 
Note that $\xi_i^{-1}$ is an element of the monoid $M^{-1}$. For any 
$c\in C_k^\har(\Gamma,B)$ and $e\in \cT^{o}_1$, we put
\[
T^\har_Q(c)(e)=\sum_{i\in I(\Gamma,Q)} \xi_i^{-1}\circ c(\xi_i(e)).
\]
Since $c$ is $\Gamma$-equivariant, we see that $T^\har_Q(c)$ is a harmonic cocycle which is independent of the choice of a complete set of 
representatives $\{\xi_i\mid i\in I(\Gamma,Q)\}$. For any $
\delta\in \Gamma$, the set $\{\xi_i\delta\mid i\in I(\Gamma,Q)\}$ is also a complete set of representatives of the same right coset space. This yields 
$T^\har_Q(c)\in C_k^\har(\Gamma,B)$. By \cite[(17)]{Boeckle}, for any $A$-subalgebra $B$ of $\bC_\infty$, the endomorphism $T_Q^\har$ is identified 
with the restriction on $C_k^\har(\Gamma,B)\subseteq C_k^\har(\Gamma,\bC_\infty)$ of the Hecke operator $T_Q$ on $S_k(\Gamma)$ via 
the isomorphism $\Res_{\Gamma}: S_k(\Gamma)\to C_k^\har(\Gamma,\bC_\infty)$.

We also introduce a Hecke operator $T_{1,Q}$ on $\cL_{1,k}(B)$ as follows. We denote by $C_{1,k}^\pm(\Gamma,B)$ the set of $\Gamma$-equivariant 
maps $c:\cT^{o,\st}_1\to V_k(B)$ satisfying $c(-e)=-c(e)$ for any $e\in \cT^{o,\st}_1$. Then the map
\[
\Phi_{1,\Gamma}: C_{1,k}^\pm(\Gamma,B)\to \cL_{1,k}(B),\quad \Phi_{1,\Gamma}(c)=\sum_{e\in \Lambda_1}[e]\otimes c(e)
\]
is independent of the choice of $\Lambda_1$. By the uniqueness of the expression (\ref{EqnRe}), we see that it is an isomorphism.
For any $c\in C_{1,k}^\pm(\Gamma,B)$ and $e\in \cT^{o,\st}_1$, we put
\[
T^\pm_{1,Q}(c)(e)=\sum_{i\in I(\Gamma,Q)}\sum_{e'\in \src(\xi_i(e))} \xi_i^{-1}\circ c(e').
\]
By (\ref{EqnGammaSrc}), it is independent of the choice of $\{\xi_i\}$, and the same argument as in the case of $T^\har_Q$ shows that it defines 
an endomorphism $T^\pm_{1,Q}$ on $C_{1,k}^\pm(\Gamma,B)$.
Now we put
\[
T_{1,Q}=\Phi_{1,\Gamma}\circ T^\pm_{1,Q}\circ \Phi_{1,\Gamma}^{-1}.
\]
From the construction, we see that $T_{1,Q}$ is independent of the choices of $\Lambda_1$ and $\{\xi_i\}$.

For an explicit description of $T_{1,Q}$, fix a complete set of representatives $\Lambda_1$ and 
take any element $x=\sum_{e\in \Lambda_1} [e]\otimes 
\omega_{e}$ of $\cL_{1,k}(B)$. For any $e'\in \cT^{o,\st}_1$, we have
\[
\Phi_{1,\Gamma}^{-1}(x)(e')=\vep_{e'}\gamma_{e'}^{-1}\circ \omega_{r(e')},
\]
where $\vep_{e'}$, $\gamma_{e'}$ and $r(e')$ are defined as (\ref{EqnRe}) using $\Lambda_1$. Hence we obtain
\begin{equation}\label{EqnDefT1}
T_{1,Q}(x)=\sum_{e\in \Lambda_1}[e]\otimes\sum_{i\in I(\Gamma,Q)}\sum_{e'\in \src(\xi_i(e))} 
\vep_{e'}(\xi_i^{-1}\gamma_{e'}^{-1})\circ \omega_{r(e')}.
\end{equation}

\begin{prop}\label{PropInteg}
The restriction of $T_{1,Q}$ on the submodule $\cV_k(B)\subseteq \cL_{1,k}(B)$ agrees with $T_Q^\har$ 
		via the isomorphism $\Phi_{\Gamma}: C_k^\har(\Gamma,B)\to\cV_k(B)$. In particular, $\cV_k(B)$ is stable under $T_{1,Q}$, and 
		if $B$ is an $A$-subalgebra of $\bC_\infty$, then $\cV_k(B)$ defines a $B$-lattice of $S_k(\Gamma)$ which is stable under Hecke 
		operators.
\end{prop}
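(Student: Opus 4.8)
The plan is to reduce the statement to a compatibility between the two descriptions of Hecke operators and then transport everything through the isomorphisms $\Phi_\Gamma$ and $\Phi_{1,\Gamma}$. The crucial observation is that both isomorphisms are given by the \emph{same} formula $c \mapsto \sum_{e\in\Lambda_1}[e]\otimes c(e)$, the only difference being their domains: $\Phi_\Gamma$ acts on harmonic cocycles $c \in C_k^\har(\Gamma,B)$ defined on all of $\cT_1^o$, whereas $\Phi_{1,\Gamma}$ acts on $c \in C_{1,k}^\pm(\Gamma,B)$, which are defined only on $\cT_1^{o,\st}$. First I would check that, under these two maps, the inclusion $\cV_k(B)\hookrightarrow \cL_{1,k}(B)$ coming from (\ref{EqnExactVLL}) is identified with the restriction map $C_k^\har(\Gamma,B)\to C_{1,k}^\pm(\Gamma,B)$, $c\mapsto c|_{\cT_1^{o,\st}}$. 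Using the formula for $\Phi_{1,\Gamma}^{-1}$ recorded before (\ref{EqnDefT1}), this amounts to the identity $\vep_{e'}\gamma_{e'}^{-1}\circ c(r(e'))=c(e')$ for every $e'\in\cT_1^{o,\st}$, which follows from the defining relation (\ref{EqnRe}) together with the $\Gamma$-equivariance of $c$ and the antisymmetry $c(-e)=-c(e)$.

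The heart of the argument is then the compatibility of the two Hecke operators with this restriction map: for $c\in C_k^\har(\Gamma,B)$ and any $e\in\cT_1^{o,\st}$ I claim $(T^\har_Q c)|_{\cT_1^{o,\st}}=T^\pm_{1,Q}(c|_{\cT_1^{o,\st}})$. To see this I would start from the definition $T^\har_Q(c)(e)=\sum_{i} \xi_i^{-1}\circ c(\xi_i(e))$ and apply the source relation (\ref{EqnSrc}) to the edge $\xi_i(e)$, writing $c(\xi_i(e))=\sum_{e'\in\src(\xi_i(e))}c(e')$. Since the monoid action $\xi_i^{-1}\circ(-)$ on $V_k(B)$ is additive, this turns each summand into $\sum_{e'\in\src(\xi_i(e))}\xi_i^{-1}\circ c(e')$, and summing over $i$ reproduces exactly the defining formula for $T^\pm_{1,Q}(c|_{\cT_1^{o,\st}})(e)$. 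The only point to verify is that every source $e'$ lies in $\cT_1^{o,\st}$, so that $c(e')=c|_{\cT_1^{o,\st}}(e')$; this is built into the definition of $\src$.

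Combining the two paragraphs gives the first assertion: under $\Phi_\Gamma$ the operator $T_{1,Q}=\Phi_{1,\Gamma}\circ T^\pm_{1,Q}\circ\Phi_{1,\Gamma}^{-1}$ restricts on $\cV_k(B)$ to $T^\har_Q$. Stability of $\cV_k(B)$ under $T_{1,Q}$ is then immediate, since $T^\har_Q$ preserves $C_k^\har(\Gamma,B)$. For the final lattice statement, I would use that $\St$ is a finitely generated projective $\bZ[\Gamma]$-module, so $\cV_k(B)=\St\otimes_{\bZ[\Gamma]}V_k(B)$ is a direct summand of a finite free $B$-module and hence finitely generated projective over $B$; together with the base-change isomorphism $\cV_k(B)\otimes_B\bC_\infty\simeq\cV_k(\bC_\infty)\simeq S_k(\Gamma)$ already established, this shows $\cV_k(B)$ is a full $B$-lattice in $S_k(\Gamma)$, and Hecke-stability transfers along this identification because $T^\har_Q$ is the restriction of $T_Q$ to $C_k^\har(\Gamma,B)$.

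I expect the main obstacle to be purely bookkeeping rather than conceptual: correctly matching all the identifications in the first step (the interplay of $\vep_{e'}$, $\gamma_{e'}$, equivariance and antisymmetry) and making sure the source relation (\ref{EqnSrc}) is applied to the right edges. Once these identifications are pinned down, the comparison of Hecke operators is a one-line manipulation and the lattice claim is formal.
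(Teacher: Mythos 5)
Your proposal is correct and uses exactly the same two ingredients as the paper's proof — the identity $c(r(e'))=\vep_{e'}\gamma_{e'}\circ c(e')$ coming from (\ref{EqnRe}) together with equivariance and antisymmetry, and the source relation (\ref{EqnSrc}) applied to $\xi_i(e)$ — merely reorganized by factoring the computation through the restriction map $C_k^\har(\Gamma,B)\to C_{1,k}^\pm(\Gamma,B)$ instead of evaluating (\ref{EqnDefT1}) directly on $\Phi_\Gamma(c)$. This is essentially the paper's argument.
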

\begin{proof}
Take any $c\in 
	C_k^\har(\Gamma,B)$. Since $c(r(e'))=\vep_{e'}\gamma_{e'}c(e')$, (\ref{EqnSrc}) yields
	\begin{align*}
	T_{1,Q}(\Phi_{\Gamma}(c))&=\sum_{e\in \Lambda_1}[e]\otimes\sum_{i\in I(\Gamma,Q)}\sum_{e'\in \src(\xi_i(e))} \xi_i^{-1}\circ c(e')\\
	&=\sum_{e\in \Lambda_1}[e]\otimes\sum_{i\in I(\Gamma,Q)} \xi_i^{-1}\circ c(\xi_i(e))=\sum_{e\in \Lambda_1}[e]\otimes T^\har_Q(c)(e),
	\end{align*}
	which agrees with $\Phi_{\Gamma}(T^\har_Q(c))$.
	\end{proof}





\section{Variation of Gouv\^{e}a-Mazur type}\label{SecGM}

Let $\frn\in A$ be a non-zero polynomial which is prime to $\wp$ and $r\geq 1$ an integer. 
For any $A$-algebra 
$B$ and any integer $m\geq 1$, put
\[
B_m=B/\wp^m B.
\]
Note that, since we have the canonical section $[-]:\kappa(\wp)\to \cO_{K_\wp}$ of the natural surjection $\cO_{K_\wp}\to \kappa(\wp)$, 
we can consider $B_m$ canonically as a $\kappa(\wp)$-algebra.

Let $\Theta$ be any subgroup of $1+\wp A_r$. We define
\[
\Gamma_0^\Theta(\wp^r)=\left\{\gamma \in \mathit{SL}_2(A)\ \middle|\ \gamma\bmod \wp^r \in \begin{pmatrix}
\Theta&*\\0&\Theta
\end{pmatrix}\right\}\subseteq \Gamma_1(\wp)
\]
and $\Gamma_1^\Theta(\frn,\wp^r)=\Gamma_1(\frn)\cap \Gamma_0^\Theta(\wp^r)$. 
The subgroup $\Gamma_1^\Theta(\frn,\wp^r)$ of $\mathit{SL}_2(A)$ is $p'$-torsion free 
and contains $\Gamma_1^{\{1\}}(\frn,\wp^r)=\Gamma_1(\frn\wp^r)$. When $\Theta=1+\wp A_r$, we also denote 
$\Gamma_0^\Theta(\wp^r)$ and $\Gamma_1^\Theta(\frn,\wp^r)$ 
by $\Gamma_0^p(\wp^r)$ and $\Gamma_1^p(\frn,\wp^r)$, respectively. 
For $\Gamma_1^\Theta(\frn,\wp^r)$, we fix a complete set of representatives $\Lambda_1$ as in \S\ref{SubsecHC}.

For Hecke operators of level $\Gamma_1^\Theta(\frn,\wp^r)$, we also write
\[
U=T_\wp,\quad U_1=T_{1,\wp}. 
\]
Let $d(k,a)$ be the dimension of the generalized $U$-eigenspace in $S_k(\Gamma_1^\Theta(\frn,\wp^r))$ of slope $a$.
In this section, we prove $p$-adic local constancy results for $d(k,a)$ with respect to $k$,
which generalize the Gouv\^{e}a-Mazur conjecture \cite[Theorem 1.1]{Ha_GVt} for the case of level $\Gamma_1(t)$. 


\subsection{Hecke operators of level $\Gamma_1^\Theta(\frn,\wp^r)$}\label{SubsecDCG}

Let $Q\in A$ be any non-zero element. Write
\[
\Gamma_1^\Theta(\frn,\wp^r)
\begin{pmatrix}
1&0\\0&Q
\end{pmatrix}\Gamma_1^\Theta(\frn,\wp^r)=\coprod_{i\in I(Q)} \Gamma_1^\Theta(\frn,\wp^r) \xi_i.
\]
For any $\gamma\in \Gamma_1^\Theta(\frn,\wp^r)$, $i\in I(Q)$ and $\lambda\in \kappa(\wp)^\times$, we have
\begin{equation}\label{EqnBetaCong}
\gamma\xi_i\equiv \begin{pmatrix}
1&*\\0&Q
\end{pmatrix},\quad \gamma\begin{pmatrix}
\lambda^{-1}&0\\0&\lambda
\end{pmatrix}\equiv \begin{pmatrix}
\lambda^{-1}&*\\0&\lambda
\end{pmatrix} \bmod \wp.
\end{equation}

Consider the Hecke operator $T_Q$ acting on the $\bC_\infty$-vector space $S_k(\Gamma_1^\Theta(\frn,\wp^r))$, which preserves 
the $A$-lattice $\cV_k(A)$ by Proposition \ref{PropInteg}. 
To describe it explicitly for the case where $Q$ is irreducible, we fix a complete set of representatives $R_Q$ of $A/(Q)$. 
When $Q$ divides $\frn\wp^r$, we have $I(Q)=R_Q$ and 
\[
(T_Qf)(z)=\frac{1}{Q}\sum_{\beta\in R_Q} f\left(\frac{z+\beta}{Q}\right).
\]
When $Q$ does not divide $\frn\wp^r$, we can find $R,S\in A$ satisfying $RQ-\frn\wp^r S=1$. Put
\[
\eta_\diamond=\begin{pmatrix}
R&S\\\frn\wp^r &Q
\end{pmatrix},\quad 
\xi_\diamond=\begin{pmatrix}
RQ&S\\\frn\wp^r Q&Q
\end{pmatrix}=\eta_\diamond\begin{pmatrix}
Q&0\\0&1
\end{pmatrix}.
\]
Then we have $I(Q)=\{\diamond\}\sqcup R_Q$ and 
\[
(T_Qf)(z)=Q^{k-1}(\langle Q\rangle_{\frn\wp^r} f)(Qz)+\frac{1}{Q}\sum_{\beta\in R_Q} f\left(\frac{z+\beta}{Q}\right),
\]
where $\langle Q\rangle_{\frn\wp^r}$ is the diamond operator acting on $S_k(\Gamma_1^\Theta(\frn,\wp^r))$ defined by $f\mapsto f|_k\eta_\diamond$.

Note that the natural map 
\[
\mathit{SL}_2(A)\to \mathit{SL}_2(A/(\frn\wp^r))\simeq \mathit{SL}_2(A/(\frn))\times \mathit{SL}_2(A_r)
\]
is surjective.
For any $\lambda\in \kappa(\wp)^\times$, we choose $\eta_\lambda\in 
\mathit{SL}
_2(A)$ 
satisfying
\begin{equation}\label{EqnDefEta}
\eta_\lambda\bmod \frn=I, \quad \eta_\lambda\bmod \wp^r=\begin{pmatrix}
[\lambda]^{-1}&0\\0&[\lambda]
\end{pmatrix}
\end{equation}
and put
\[
\langle \lambda\rangle_{\wp^r}f=f|_k\eta_\lambda.
\]
By 
\begin{equation}\label{EqnGammaEta}
\Gamma_1(\frn\wp^r)\subseteq \Gamma_1^\Theta(\frn,\wp^r),\quad  \eta_\lambda^{-1}\Gamma_1^\Theta(\frn,\wp^r)\eta_\lambda=\Gamma_1^\Theta(\frn,\wp^r),
\end{equation}
this is independent of the choice of $\eta_\lambda$ and defines
an action of $\kappa(\wp)^\times$ on $S_k(\Gamma_1^\Theta(\frn,\wp^r))$. 

For any $\kappa(\wp)[\kappa(\wp)^\times]$-module $M$ and any character $\chi:\kappa(\wp)^\times\to \kappa(\wp)^\times$, 
we denote by $M(\chi)$ the maximal $\kappa(\wp)$-subspace of $M$ on which any $\lambda\in \kappa(\wp)^\times$ acts via $\chi(\lambda)$. 
Since the order of the group $\kappa(\wp)^\times$ is prime to $p$, we have 
the projector
\[
\vep_\chi:M\to M(\chi),\quad \vep_\chi(m)=-\sum_{\lambda\in\kappa(\wp)^\times} \chi(\lambda)^{-1}(\lambda\cdot m)
\]
and the 
decomposition into $\chi$-parts
\[
M=\bigoplus_\chi M(\chi),
\]
where the sum runs over the set of such characters $\kappa(\wp)^\times\to \kappa(\wp)^\times$.

We consider $\bar{K}$ as a $\kappa(\wp)$-algebra by the unique map $\kappa(\wp)\to \bar{K}$ which commutes the diagram
\[
\xymatrix{
	\kappa(\wp)\ar[r]\ar[dr]_{[-]}& \bar{K}\ar[d]^{\iota_\wp}\\
	& \bC_\wp.
}
\]
Then we have
\[
S_k(\Gamma_1^\Theta(\frn,\wp^r))=\bigoplus_{\chi} S_k(\Gamma_1^\Theta(\frn,\wp^r))(\chi).
\]


Note that, when an irreducible polynomial $Q$ does not divide $\frn\wp^r$, we may further assume that $\eta_\lambda$ satisfies
\[
\eta_\lambda=\begin{pmatrix}
a&b\\c&d
\end{pmatrix},\quad a\notin (Q).
\]
Using this, for any irreducible polynomial $Q$ we can show
\[
\Gamma_1^\Theta(\frn,\wp^r)
\eta_\lambda^{-1}\begin{pmatrix}
1&0\\0&Q
\end{pmatrix}\eta_\lambda\Gamma_1^\Theta(\frn,\wp^r)\\
=\Gamma_1^\Theta(\frn,\wp^r)
\begin{pmatrix}
1&0\\0&Q
\end{pmatrix}\Gamma_1^\Theta(\frn,\wp^r).
\]
Then (\ref{EqnGammaEta}) yields
\begin{equation}\label{EqnGammaEtaCoset}
\begin{split}
\coprod_{i\in I(Q)} &\Gamma_1^\Theta(\frn,\wp^r) \xi_i\eta_\lambda=\Gamma_1^\Theta(\frn,\wp^r)
\begin{pmatrix}
1&0\\0&Q
\end{pmatrix}\eta_\lambda\Gamma_1^\Theta(\frn,\wp^r)\\
&=\Gamma_1^\Theta(\frn,\wp^r)
\eta_\lambda
\begin{pmatrix}
1&0\\0&Q
\end{pmatrix}\Gamma_1^\Theta(\frn,\wp^r)=\coprod_{i\in I(Q)} \Gamma_1^\Theta(\frn,\wp^r) \eta_\lambda\xi_i.
\end{split}
\end{equation}
Thus $T_Q$ commutes with $\langle \lambda\rangle_{\wp^r}$ and
$S_k(\Gamma_1^\Theta(\frn,\wp^r))(\chi)$ is stable under Hecke operators. We denote by $d(k,\chi,a)$ be the dimension of the generalized $U$-eigenspace in 
$S_k(\Gamma_1^\Theta(\frn,\wp^r))(\chi)$ of slope $a$. To indicate the level, we often write
\[
d(k,a)=d(\Gamma_1^\Theta(\frn,\wp^r),k,a),\quad d(k,\chi,a)=d(\Gamma_1^\Theta(\frn,\wp^r),k,\chi,a).
\]

For any $A$-algebra $B$, we also have the diamond operator $\langle \lambda\rangle_{\wp^r}$
\[
\langle \lambda\rangle_{\wp^r}\in \End(C_k^\har(\Gamma_1^\Theta(\frn,\wp^r),B)),\quad c\mapsto (e\mapsto \eta_\lambda^{-1}\circ c(\eta_\lambda(e))),
\]
which is compatible with that on $S_k(\Gamma_1^\Theta(\frn,\wp^r))$ when $B=\bC_\infty$.
From (\ref{EqnGammaEta}) we see that $e$ is $\Gamma_1^\Theta(\frn,\wp^r)$-stable if and only of $\eta_\lambda(e)$ is, and thus the corresponding operators on $
\cV_k(B)$ and $\cL_{1,k}(B)$ are given by
\begin{equation}\label{EqnDiaL}
\langle \lambda\rangle_{\wp^r}(\sum_{e\in\Lambda_1}[e]\otimes \omega_e)=\sum_{e\in \Lambda_1}[e]\otimes \vep_{\eta_\lambda(e)}(\eta_\lambda^{-1}\gamma^{-1}
_{\eta_\lambda(e)}
)\circ \omega_{r(\eta_\lambda(e))}.
\end{equation}
When $B$ is also a $\kappa(\wp)$-algebra, we have the decomposition
\[
C_k^\har(\Gamma_1^\Theta(\frn,\wp^r),B)=\bigoplus_\chi C_k^\har(\Gamma_1^\Theta(\frn,\wp^r),B)(\chi)
\]
and similarly for $\cL_{1,k}(B)$ and $\cV_k(B)$. These summands are stable under 
Hecke operators by 
(\ref{EqnGammaEtaCoset}).


\subsection{Weight reduction}\label{SubsecWtRed}

Let $N\geq 1$ be any integer. For any $A$-algebra $B$, the $B$-linear map
\[
\mu_{k,N}: H_{k-2}(B)\to H_{k-2+N}(B),\quad X^iY^{k-2-i}\mapsto X^{i+N}Y^{k-2-i}
\]
induces the dual map
\[
\rho_{k,N}: V_{k+N}(B)\to V_k(B),\quad (X^iY^{k+N-2-i})^\vee\mapsto \begin{cases} 
(X^{i-N}Y^{k+N-2-i})^\vee & (i\geq N) \\ 0 & (i<N)\end{cases}.
\]
It is a surjection whose kernel is
\[
V_{k+N}^{<N}(B)=\bigoplus_{i<N} B (X^iY^{k+N-2-i})^\vee.
\]

\begin{lem}\label{LemWtRedV}
	Let $n\geq 0$ be any non-negative integer, 
	$\bar{B}$ any $A_{p^n}$-algebra and $\lambda\in \kappa(\wp)^\times$. Let $\xi\in M_2(A)$ be any element satisfying
	\[
	\xi=\begin{pmatrix}
	a&b\\c&d
	\end{pmatrix},\quad a\bmod \wp=\lambda,\quad c\equiv 0 \bmod \wp.
	\]
	Let $m$ be the order of $\lambda$ in $\kappa(\wp)^\times$. 
	Then, for any element $\omega\in V_{k+p^n m}(\bar{B})$, we have
	\[
	\xi^{-1}\circ \rho_{k,p^n m}(\omega)=\rho_{k,p^n m}(\xi^{-1}\circ\omega).
	\]
	In particular, for any integer $m'\geq 1$, the map $\rho_{k,p^nm'}:V_{k+p^nm'}(\bar{B})\to V_k(\bar{B})$ is 
	$\Gamma_1^\Theta(\frn,\wp^r)$-equivariant and its kernel $V_{k+p^nm'}^{<p^n m'}(\bar{B})$ is $\Gamma_1^\Theta(\frn,\wp^r)$-stable.
	\end{lem}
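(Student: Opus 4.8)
The plan is to reduce the claimed operator identity to a single identity between homogeneous polynomials in $\bar{B}[X,Y]$, and then to settle that polynomial identity by a Frobenius computation in characteristic $p$.

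First I would unwind the two sides. Recall that $\rho_{k,N}$ is the $\bar{B}$-dual of the multiplication map $\mu_{k,N}\colon P\mapsto X^N P$, so that $\rho_{k,N}(\omega)(P)=\omega(X^N P)$ for $\omega\in V_{k+N}(\bar{B})$ and $P\in H_{k-2}(\bar{B})$, while the action of $\xi^{-1}\in M^{-1}$ on $V_\bullet(\bar B)$ is given by $(\xi^{-1}\circ\psi)(P)=\psi(\xi\circ P)$. Writing $N=p^n m$ and evaluating both sides of the asserted equality at an arbitrary $P\in H_{k-2}(\bar B)$, the left-hand side becomes $\omega\big(X^N\cdot(\xi\circ P)\big)$ and the right-hand side becomes $\omega\big(\xi\circ(X^N P)\big)$. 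Since $\xi\circ{}$ is the algebra endomorphism of $\bar B[X,Y]$ given by substitution, $\xi\circ(X^N P)=(\xi\circ X)^N\,(\xi\circ P)$. As $\omega$ and $P$ are arbitrary, the identity therefore follows once I show the single relation
\[
(\xi\circ X)^{N}=X^{N}\qquad\text{in }\bar B[X,Y],\ N=p^n m.
\]

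For this relation I would use that $\bar B$ is an $A_{p^n}$-algebra, hence of characteristic $p$ and with $\wp^{p^n}=0$. Here $\xi\circ X=aX+cY$ with $a\equiv\lambda$ and $c\equiv 0\bmod\wp$. Applying the $p^n$-power Frobenius gives $(aX+cY)^{p^n}=a^{p^n}X^{p^n}+c^{p^n}Y^{p^n}$; writing $c=\wp c'$ the second term is $\wp^{p^n}(c')^{p^n}Y^{p^n}=0$, so $(\xi\circ X)^{p^n}=a^{p^n}X^{p^n}$. Raising to the $m$-th power yields $(\xi\circ X)^{p^n m}=a^{p^n m}X^{p^n m}$, and since $\lambda$ has order $m$ we may write $a^m=1+\wp s$, whence $a^{p^n m}=(a^m)^{p^n}=(1+\wp s)^{p^n}=1+\wp^{p^n}s^{p^n}=1$. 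This gives $(\xi\circ X)^{N}=X^{N}$ and proves the first assertion.

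For the ``in particular'' part, note that any $\gamma\in\Gamma_1^\Theta(\frn,\wp^r)$ reduces mod $\wp$ to an upper triangular unipotent matrix, so it satisfies the hypotheses of the first part with $\lambda=1$ and $m=1$; applied in every weight this shows that each $\rho_{k',p^n}$ is $\Gamma_1^\Theta(\frn,\wp^r)$-equivariant. Since $\rho_{k,p^n m'}$ is dual to multiplication by $X^{p^n m'}=(X^{p^n})^{m'}$, it factors as the composite $\rho_{k,p^n}\circ\rho_{k+p^n,p^n}\circ\cdots\circ\rho_{k+(m'-1)p^n,p^n}$ of such maps, and a composite of equivariant maps is equivariant. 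Finally the kernel of an equivariant map is a subrepresentation, so $V_{k+p^n m'}^{<p^n m'}(\bar B)=\Ker(\rho_{k,p^n m'})$ is $\Gamma_1^\Theta(\frn,\wp^r)$-stable. The only genuine content is the displayed polynomial identity; the main thing to get right is the bookkeeping of the dual and substitution actions in the reduction, after which the Frobenius, thanks to $\wp^{p^n}=0$ and the order-$m$ condition, kills both the off-diagonal contribution $c^{p^n}$ and the scalar defect $a^{p^n m}-1$ simultaneously once $N$ is taken to be $p^n m$.
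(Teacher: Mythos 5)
Your proof is correct and follows essentially the same route as the paper's: both reduce the assertion to the dual of the commutation of $\xi\circ{}$ with multiplication by $X^{p^n m}$, and both settle it by the Frobenius computation $(aX+cY)^{p^n}=a^{p^n}X^{p^n}+c^{p^n}Y^{p^n}$ together with $\wp^{p^n}\bar B=0$ and $\lambda^m=1$. The only cosmetic differences are that you isolate the single identity $(\xi\circ X)^{p^n m}=X^{p^n m}$ (using $a^m=1+\wp s$ rather than the section $[\lambda]$) and obtain the case of general $m'$ by factoring $\rho_{k,p^n m'}$ into a composite of maps $\rho_{k',p^n}$, whereas the paper reruns the same computation on basis monomials.
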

\begin{proof}
	In the ring $A_{p^n}$, we can write $a=[\lambda]+\wp a'$ with some $a'\in A_{p^n}$. 
	For any integer $i\in [0,k-2]$, the assumption $\wp^{p^n}\bar{B}=0$
implies
	\begin{align*}
	\xi\circ\mu_{k,p^n m}(X^iY^{k-2-i})&=(aX+cY)^{p^n m+i}(bX+dY)^{k-2-i}\\
	&=(a^{p^n}X^{p^n}+c^{p^n}Y^{p^n})^m (aX+cY)^{i}(bX+dY)^{k-2-i}\\
	&=([\lambda]^{p^n}X^{p^n})^m(aX+cY)^{i}(bX+dY)^{k-2-i}\\
	&=X^{p^n m}(aX+cY)^{i}(bX+dY)^{k-2-i}\\
   &=\mu_{k,p^n m}(\xi\circ (X^iY^{k-2-i})).
	\end{align*}
	Taking the dual yields the lemma.
	\end{proof}

By Lemma \ref{LemWtRedV}, for any $A_{p^n}$-algebra $\bar{B}$ and any integer $m'\geq 1$, we obtain the surjection
\[
1\otimes \rho_{k,p^nm'}:\cV_{k+p^nm'}(\bar{B})\to \cV_k(\bar{B})
\]
and similarly for $\cL_{1,k}(\bar{B})$.

\begin{lem}\label{LemWtRed}
	For any $A_{p^n}$-algebra $\bar{B}$, the maps
	\[
	1\otimes \rho_{k,p^n}:\cV_{k+p^n}(\bar{B})\to \cV_k(\bar{B}),\quad \cL_{1,k+p^n}(\bar{B})\to \cL_{1,k}(\bar{B})
	\]
	commute with Hecke operators. Moreover, the maps 
	\[
	1\otimes \rho_{k,p^n(q^d-1)}:\cV_{k+p^n(q^d-1)}(\bar{B})\to \cV_k(\bar{B}),\quad \cL_{1,k+p^n(q^d-1)}(\bar{B})\to \cL_{1,k}(\bar{B})
	\]
	commute with $\langle\lambda\rangle_{\wp^r}$ for any $\lambda\in\kappa(\wp)^\times$. In particular, the $\bar{B}$-submodules
	\[
	\cV_{k+p^n}^{<p^n}(\bar{B}),\quad \cV_{k+p^n(q^d-1)}^{<p^n(q^d-1)}(\bar{B})
	\]
	are stable under Hecke operators.
	\end{lem}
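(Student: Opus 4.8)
The plan is to reduce all three assertions to Lemma \ref{LemWtRedV} by reading off, from the explicit formulas (\ref{EqnDefT1}) and (\ref{EqnDiaL}), exactly which matrices act on the coefficient modules, and then checking that each of them meets the congruence hypothesis of that lemma.

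I would first treat compatibility with the Hecke operators. In (\ref{EqnDefT1}) the operator $T_{1,Q}$ acts on the coefficients through the matrices $\xi_i^{-1}\gamma_{e'}^{-1}=(\gamma_{e'}\xi_i)^{-1}$, with $\gamma_{e'}\in\Gamma_1^\Theta(\frn,\wp^r)$ and $\xi_i$ a coset representative. Since the data $\xi_i$, $\vep_{e'}$, $\gamma_{e'}$, $r(e')$ and $\src(\xi_i(e))$ depend only on $\Gamma_1^\Theta(\frn,\wp^r)$, $Q$ and the tree, the formula (\ref{EqnDefT1}) in weights $k$ and $k+p^n$ differs only in the coefficient module; hence it suffices to show that $\rho_{k,p^n}$ commutes with the action of each matrix $\gamma_{e'}\xi_i$. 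By (\ref{EqnBetaCong}) we have $\gamma_{e'}\xi_i\equiv\begin{pmatrix}1&*\\0&Q\end{pmatrix}\bmod\wp$, so $\gamma_{e'}\xi_i\in M_2(A)$ satisfies the hypothesis of Lemma \ref{LemWtRedV} with residue $\lambda=1$; as $[1]=1$ the weight shift is immaterial and $\rho_{k,p^n}$ (indeed $\rho_{k,p^n m'}$ for every $m'$) commutes with $(\gamma_{e'}\xi_i)^{-1}$. Summing the corresponding terms in (\ref{EqnDefT1}) shows that $1\otimes\rho_{k,p^n}$ intertwines $T_{1,Q}$ on $\cL_{1,k+p^n}(\bar B)$ and on $\cL_{1,k}(\bar B)$, and restricting to the Hecke-stable submodule $\cV_k(\bar B)\subseteq\cL_{1,k}(\bar B)$ of Proposition \ref{PropInteg} gives the same statement for $\cV$.

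Next I would handle the diamond operators. By (\ref{EqnDiaL}), $\langle\lambda\rangle_{\wp^r}$ acts on the coefficients through $\eta_\lambda^{-1}\gamma^{-1}_{\eta_\lambda(e)}=(\gamma_{\eta_\lambda(e)}\eta_\lambda)^{-1}$. Using $\gamma_{\eta_\lambda(e)}\in\Gamma_1^\Theta(\frn,\wp^r)\subseteq\Gamma_1(\wp)$ and the normalization (\ref{EqnDefEta}), one finds $\gamma_{\eta_\lambda(e)}\eta_\lambda\equiv\begin{pmatrix}\lambda^{-1}&*\\0&\lambda\end{pmatrix}\bmod\wp$, so this matrix fits Lemma \ref{LemWtRedV} with residue $\lambda^{-1}\in\kappa(\wp)^\times$. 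The only new point is that I take the shift $N=p^n(q^d-1)$ in place of $p^n\ord(\lambda^{-1})$; but the computation in the proof of Lemma \ref{LemWtRedV} goes through verbatim once $[\lambda^{-1}]^{q^d-1}=1$, and this holds because $[-]$ is multiplicative and $\kappa(\wp)^\times$ has order $q^d-1$. Hence $\rho_{k,p^n(q^d-1)}$ commutes with $(\gamma_{\eta_\lambda(e)}\eta_\lambda)^{-1}$, and summing in (\ref{EqnDiaL}) shows that $1\otimes\rho_{k,p^n(q^d-1)}$ commutes with $\langle\lambda\rangle_{\wp^r}$ on both $\cV$ and $\cL_1$. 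Moreover, since the Hecke matrices $\gamma_{e'}\xi_i$ still have residue $1$, the first part applies equally to the shift $p^n(q^d-1)$, so $1\otimes\rho_{k,p^n(q^d-1)}$ also commutes with every $T_Q$.

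Finally, for the stability statements I would identify $\cV_{k+p^n}^{<p^n}(\bar B)$ and $\cV_{k+p^n(q^d-1)}^{<p^n(q^d-1)}(\bar B)$ with the kernels of $1\otimes\rho_{k,p^n}$ and $1\otimes\rho_{k,p^n(q^d-1)}$. Indeed each $V_{k+N}^{<N}(\bar B)$ is $\Gamma_1^\Theta(\frn,\wp^r)$-stable by Lemma \ref{LemWtRedV}, and since $\St$ is projective over $\bZ[\Gamma_1^\Theta(\frn,\wp^r)]$, tensoring the exact sequence $0\to V_{k+N}^{<N}(\bar B)\to V_{k+N}(\bar B)\xrightarrow{\rho_{k,N}}V_k(\bar B)\to 0$ with $\St$ remains exact, exhibiting these submodules as the asserted kernels. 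A kernel of a map commuting with the Hecke (and diamond) operators is stable under them, which gives the claim. The only substantive ingredient is Lemma \ref{LemWtRedV}; granting it, everything is the bookkeeping of matching the matrices in (\ref{EqnDefT1}) and (\ref{EqnDiaL}) to its hypotheses, the one genuine subtlety being the passage from $\ord(\lambda^{-1})$ to the uniform exponent $q^d-1$, which is what lets a single weight shift commute simultaneously with all diamond operators and thereby respect the decomposition into $\chi$-parts.
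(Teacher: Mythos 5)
Your proof is correct and follows essentially the same route as the paper's: reduce to $\cL_{1,k}$, use the explicit formulas (\ref{EqnDefT1}) and (\ref{EqnDiaL}) to reduce everything to the commutation of $\rho_{k,N}$ with the matrices $\gamma\xi_i$ and $\gamma\eta_\lambda$, and conclude by (\ref{EqnBetaCong}) and Lemma \ref{LemWtRedV}. Your treatment is in fact slightly more careful than the paper's on the one delicate point, namely justifying the uniform shift $p^n(q^d-1)$ in place of $p^n\,\mathrm{ord}(\lambda^{-1})$ via $[\lambda^{-1}]^{q^d-1}=1$, which the paper leaves implicit.
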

\begin{proof}
	It is enough to show the assertions on $\cL_{1,k}(\bar{B})$. 
	By (\ref{EqnDefT1}) and (\ref{EqnDiaL}), we reduce ourselves to 
	showing that, for any $\gamma\in \Gamma_1^\Theta(\frn,\wp^r)$, $i\in I(Q)$, $\lambda\in\kappa(\wp)^\times$, $\omega \in V_{k+p^n}(\bar{B})$ and $\omega' \in 
	V_{k
	+p^n(q^d-1)}(\bar{B})$, we have
	\begin{align*}
	(\gamma\xi_i)^{-1}\circ \rho_{k,p^n}(\omega)&=\rho_{k,p^n}((\gamma\xi_i)^{-1}\circ \omega),\\ (\gamma\eta_\lambda)^{-1}\circ \rho_{k,p^n(q^d-1)}(\omega')&=
	\rho_{k,p^n(q^d-1)}
	((\gamma\eta_\lambda)^{-1}\circ \omega').
	\end{align*}
	By (\ref{EqnBetaCong}), this follows from Lemma \ref{LemWtRedV}.
\end{proof}


\subsection{Dimension of slope zero cuspforms}\label{SubsecDimOrd}

Using harmonic cocycles, the proofs of \cite[Corollary 8.2 and Proposition 8.3]{Hida_TotReal} can be adapted to obtain constancy 
results for the dimension of slope zero cuspforms with respect to the weight and the level at $\wp$. First we prove the following key lemma.


\begin{lem}\label{LemPreGliss}
	Let $B$ be any flat $A$-algebra. For any $s\in \St$ and any integer $j\in [0,k-2]$, the element $s\otimes (X^jY^{k-2-j})^\vee\in \cV_k(B)$ 
	satisfies
	\[
	U(s\otimes (X^jY^{k-2-j})^\vee)\in \wp^{k-2-j}\cV_k(B).
	\]  
	\end{lem}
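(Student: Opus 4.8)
The plan is to track $\wp$-divisibility through the explicit description of $U_1=T_{1,\wp}$ in (\ref{EqnDefT1}) and then transport it from $\cL_{1,k}(B)$ to $\cV_k(B)$. Set $\Gamma=\Gamma_1^\Theta(\frn,\wp^r)$. First I would record the relevant coset representatives: since $r\geq 1$ forces $\wp\mid\frn\wp^r$, the operator $U=T_\wp$ has no diamond contribution, and matching the formula $(T_\wp f)(z)=\frac1\wp\sum_\beta f(\frac{z+\beta}{\wp})$ against the slash operator one may take $\xi_\beta=\begin{pmatrix}1&\beta\\0&\wp\end{pmatrix}$ with $\beta$ running over a set $R_\wp$ of representatives of $A/(\wp)$, each $\xi_\beta^{-1}$ lying in the monoid $M^{-1}$.

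Next I would reduce the target. By Proposition \ref{PropInteg} the element $U(s\otimes(X^jY^{k-2-j})^\vee)$ already lies in $\cV_k(B)$, so it suffices to show that it lies in $\wp^{k-2-j}\cL_{1,k}(B)$: since (\ref{EqnExactVLL}) is split, $\cV_k(B)$ is a direct summand of $\cL_{1,k}(B)$, and intersecting $\wp^{k-2-j}\cL_{1,k}(B)$ with this summand gives exactly $\wp^{k-2-j}\cV_k(B)$. Here flatness of $B$ guarantees that multiplication by $\wp$ is injective, so these divisibility statements are unambiguous.

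The heart of the argument is a local computation on $V_k(B)$. Writing $s=\sum_{e\in\Lambda_1}[e]\cdot g_e$ in the free right $\bZ[\Gamma]$-basis $\{[e]\}$ of $\bZ[\bar{\cT}_1^{o,\st}]$, so that $\omega_{r(e')}=g_{r(e')}\circ(X^jY^{k-2-j})^\vee$, formula (\ref{EqnDefT1}) exhibits $U_1(s\otimes(X^jY^{k-2-j})^\vee)$ as a $\bZ$-linear combination of terms $\vep_{e'}(\xi_\beta^{-1}\delta)\circ(X^jY^{k-2-j})^\vee$ with $\delta\in\Gamma$ (absorbing $\gamma_{e'}^{-1}g_{r(e')}$ and using associativity of the $M^{-1}$-action); the signs $\vep_{e'}$ and the source sums are irrelevant to divisibility. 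Thus everything reduces to the claim that, for $\mu=\xi_\beta^{-1}\delta\in M^{-1}$, one has $\mu\circ(X^jY^{k-2-j})^\vee\in\wp^{k-2-j}V_k(B)$. To see this I would compute $\mu^{-1}=\delta^{-1}\xi_\beta$ by hand: because $\Gamma\subseteq\Gamma_1(\wp)$ the lower-left entry of $\delta$ is divisible by $\wp$, and a direct multiplication shows that both $\mu^{-1}\circ X$ and $\mu^{-1}\circ Y$ lie in the $A$-lattice $AX+\wp AY$. Hence every occurrence of $Y$ produced by $\mu^{-1}$ carries a factor $\wp$, so the coefficient of $X^jY^{k-2-j}$ in $\mu^{-1}\circ(X^aY^{k-2-a})$ is divisible by $\wp^{k-2-j}$ for every $a$; dualizing through $(\mu\circ\omega)(P)=\omega(\mu^{-1}\circ P)$ gives the claim.

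The main obstacle is precisely this $\wp$-bookkeeping inside (\ref{EqnDefT1}): one must verify that the $\Gamma$-twists $\gamma_{e'}$ and the passage to sources do not spoil divisibility. The point is that they can only ever contribute elements $\delta$ of $\Gamma$, and it is exactly the $\Gamma_1(\wp)$-congruence on such $\delta$ that makes $\delta^{-1}\xi_\beta$ stabilize the lattice $AX+\wp AY$; without the level-$\wp$ structure the estimate would fail. Everything else—the choice of $\Lambda_1$, of the $\xi_\beta$, and the splitting of (\ref{EqnExactVLL})—is formal.
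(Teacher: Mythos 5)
Your argument is correct and is essentially the paper's own proof: both reduce the statement to $\wp$-divisibility inside $\cL_{1,k}(B)$ (you via the splitting of (\ref{EqnExactVLL}), the paper via the reduction maps $\cV_k(B)\to\cV_k(B_m)$ and flatness), write $s$ in terms of the free basis $\{[e]\mid e\in\Lambda_1\}$ twisted by $\Gamma$, and then use the congruence $\gamma\xi_\beta\equiv\begin{pmatrix}1&*\\0&\wp\end{pmatrix}\bmod\wp$ of (\ref{EqnBetaCong}) to see that every occurrence of $Y$ in $(\gamma\xi_\beta)\circ(X^lY^{k-2-l})$ carries a factor of $\wp$, so the coefficient of $X^jY^{k-2-j}$ is divisible by $\wp^{k-2-j}$. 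No gaps.
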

\begin{proof}
	For any non-negative integer $m$, we have the commutative diagram with exact rows
	\[
	\xymatrix{
		0 \ar[r] & \cV_k(B) \ar[r]\ar[d] & \cL_{1,k}(B) \ar[r]^{\partial_\Gamma\otimes 1}\ar[d] &\cL_{0,k}(B)\ar[r]\ar[d] & 0\\
		0 \ar[r] & \cV_k(B_m) \ar[r] & \cL_{1,k}(B_m) \ar[r]^{\partial_\Gamma\otimes 1} &\cL_{0,k}(B_m)\ar[r] & 0.
	}
	\]
	Since the structure map $A\to B$ is flat, we see that $\wp^{m}\cV_k(B)$ and $\wp^m\cL_{1,k}(B)$ are the kernels of the left two vertical maps. Thus it 
	suffices to show 
	$U_1(s\otimes (X^jY^{k-2-j})^\vee)\in \wp^{k-2-j}\cL_{1,k}(B)$. 
	
	Any element of $\St$ is a $\bZ$-linear combination of elements of $\bZ[\bar{\cT}^{o,\st}_1]$ of the form $[e]|_\alpha$ 
	with  $e\in \Lambda_1$ and $\alpha\in \Gamma_1^\Theta(\frn,\wp^r)$. Moreover, for any $\omega\in V_k(B)$, we have 
	$[e]|_\alpha\otimes\omega=[e]\otimes\alpha\circ \omega$.
	By (\ref{EqnDefT1}), it is enough to show that, 
	for any $i\in I(\wp)$, $\gamma\in \Gamma_1^\Theta(\frn,\wp^r)$ and integers $j,l\in [0,k-2]$, we have
	\[
	((\gamma\xi_i)^{-1}\circ (X^jY^{k-2-j})^\vee)(X^lY^{k-2-l})\in \wp^{k-2-j}B.
	\]
	
	Write $\gamma\xi_i=\begin{pmatrix}
	a&b\\c&d
	\end{pmatrix}$. 
	Then the above evaluation is equal to
	\[
	(X^jY^{k-2-j})^\vee((aX+cY)^l(bX+dY)^{k-2-l}).
	\]
	By (\ref{EqnBetaCong}) we have $c,d\equiv 0 \bmod \wp$ and the coefficient of $X^jY^{k-2-j}$ in the product 
	$(aX+cY)^l(bX+dY)^{k-2-l}$ is divisible by $\wp^{k-2-j}$. This concludes the proof.
\end{proof}

\begin{prop}\label{PropOrdAgree}
	\begin{enumerate}
		\item\label{PropOrdAgree-1} $d(\Gamma_1^\Theta(\frn,\wp^r),k,0)$ is independent of $k$.
		\item\label{PropOrdAgree-chi} For any character $\chi:\kappa(\wp)^\times\to \kappa(\wp)^\times$, we have
		\[
		k_1\equiv k_2 \bmod q^d-1\Rightarrow d(\Gamma_1^\Theta(\frn,\wp^r),k_1,\chi,0)=d(\Gamma_1^\Theta(\frn,\wp^r),k_2,\chi,0).
		\]
	\end{enumerate}
\end{prop}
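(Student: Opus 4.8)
The plan is to reduce the analytically defined slope-zero dimension to a purely mod-$\wp$ computation and then to transport it across weights by the weight-reduction maps of \S\ref{SubsecWtRed}, exactly in the spirit of Hida theory. Concretely, since $U$ preserves the $A$-lattice $\cV_k(A)$ (Proposition \ref{PropInteg}), its characteristic polynomial lies in $A[T]$ and all its eigenvalues are integral over $A$, hence lie in $\oel$ for some finite extension $L/K_\wp$. An eigenvalue has slope $0$ exactly when it is a unit in $\oel$, equivalently when its reduction is non-zero; reducing the characteristic polynomial modulo $\wp$ then shows that $d(\Gamma_1^\Theta(\frn,\wp^r),k,0)$ equals the number of non-zero eigenvalues of $\bar{U}:=U\bmod\wp$ acting on $\cV_k(\kappa(\wp))=\cV_k(A)\otimes_A\kappa(\wp)$. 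Writing $\cV_k(\kappa(\wp))=\cV_k^{\mathrm{ord}}\oplus\cV_k^{\mathrm{nil}}$ for the Fitting decomposition into the part on which $\bar U$ is bijective and the part on which it is nilpotent, this means $d(\Gamma_1^\Theta(\frn,\wp^r),k,0)=\dim_{\kappa(\wp)}\cV_k^{\mathrm{ord}}$, and the same identity holds inside each $\chi$-part after applying the projector $\vep_\chi$, which is defined over $\kappa(\wp)$.

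For part (\ref{PropOrdAgree-1}) I would take the weight-reduction exponent $p^0=1$. By Lemma \ref{LemWtRed} the surjection $1\otimes\rho_{k,1}\colon\cV_{k+1}(\kappa(\wp))\to\cV_k(\kappa(\wp))$ commutes with every Hecke operator, and its kernel is $\cV_{k+1}^{<1}(\kappa(\wp))$, spanned by the elements $s\otimes(Y^{k-1})^\vee$ with $s\in\St$. By Lemma \ref{LemPreGliss}, $U$ sends such an element into $\wp^{k-1}\cV_{k+1}(A)$, which reduces to $0$ because $k\geq2$; hence $\bar U$ annihilates the kernel, so $\ker(1\otimes\rho_{k,1})\subseteq\cV_{k+1}^{\mathrm{nil}}$. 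Being $\bar U$-equivariant, $1\otimes\rho_{k,1}$ respects the Fitting decomposition, and since $\cV^{\mathrm{ord}}=\mathrm{im}(\bar U^N)$ for large $N$ it carries $\cV_{k+1}^{\mathrm{ord}}$ onto $\cV_k^{\mathrm{ord}}$; its restriction there is injective because $\ker(1\otimes\rho_{k,1})\subseteq\cV_{k+1}^{\mathrm{nil}}$ meets $\cV_{k+1}^{\mathrm{ord}}$ trivially. Thus $\cV_{k+1}^{\mathrm{ord}}\cong\cV_k^{\mathrm{ord}}$, giving $d(\Gamma_1^\Theta(\frn,\wp^r),k+1,0)=d(\Gamma_1^\Theta(\frn,\wp^r),k,0)$ and hence independence of $k$.

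For part (\ref{PropOrdAgree-chi}) I would run the identical argument with the exponent $q^d-1$ in place of $1$. By Lemma \ref{LemWtRed} the map $1\otimes\rho_{k,q^d-1}$ commutes with every diamond operator $\langle\lambda\rangle_{\wp^r}$ and so preserves the $\chi$-decomposition; moreover, since the coset representatives $\xi_i$ defining $U=T_\wp$ satisfy $a\equiv1$ and $c\equiv0\bmod\wp$ by (\ref{EqnBetaCong}), the computation in the proof of Lemma \ref{LemWtRedV} applies verbatim with this exponent and shows that $1\otimes\rho_{k,q^d-1}$ also commutes with $U$. Its kernel $\cV_{k+q^d-1}^{<q^d-1}(\kappa(\wp))$ is spanned by elements of $Y$-degree at least $k-1\geq1$, so Lemma \ref{LemPreGliss} again forces $\bar U$ to vanish on it. Applying $\vep_\chi$ and repeating the Fitting argument inside the $\chi$-part yields $d(\Gamma_1^\Theta(\frn,\wp^r),k+(q^d-1),\chi,0)=d(\Gamma_1^\Theta(\frn,\wp^r),k,\chi,0)$, and iterating this step connects any two weights $\geq2$ that are congruent modulo $q^d-1$.

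The step I expect to be the main obstacle is the first one: rigorously matching the slope-zero dimension, defined over $\bC_\infty$ through the $\wp$-adic valuations of the eigenvalues transported by $\iota_\wp$, with the mod-$\wp$ rank of $\bar U$. This demands that $U$ be genuinely defined over $A$, so that reduction of the characteristic polynomial commutes with reduction of the operator; that integrality of the eigenvalues place them all in $\oel$; and that the unit eigenvalues be exactly those with non-zero reduction, which is the content of the Newton-polygon bookkeeping. The $\chi$-refinement additionally requires $\vep_\chi$ to be defined over $\kappa(\wp)$ so that the isotypic decomposition is compatible with reduction modulo $\wp$. Once this dictionary is in place, the weight-reduction maps of \S\ref{SubsecWtRed} together with Lemma \ref{LemPreGliss} make the remaining steps essentially formal.
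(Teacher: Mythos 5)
Your proposal is correct and follows essentially the same route as the paper: both reduce the slope-zero count to the number of nonzero eigenvalues of $U$ on $\cV_k(\kappa(\wp))$, then use the weight-reduction exact sequence of Lemma \ref{LemWtRed} together with Lemma \ref{LemPreGliss} (which kills $U$ on the kernel $\cV^{<N}$ modulo $\wp$) to transfer that count between weights. The only cosmetic difference is that you package the final step via the ordinary/nilpotent Fitting decomposition, whereas the paper uses the multiplicativity of $\det(I-UX)$ along the exact sequence; these are equivalent.
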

\begin{proof}
	Note that $d(\Gamma_1^\Theta(\frn,\wp^r),k,0)$ is equal to the degree of the polynomial
	\[
	\det(I-UX;\cV_k(\kappa(\wp))).
	\]
	By Lemma \ref{LemWtRed} for $n=0$, we have the exact sequence
	\[
	\xymatrix{
		0\ar[r]& \cV_{k+1}^{<1}(\kappa(\wp))\ar[r] & \cV_{k+1}(\kappa(\wp))\ar[r]& \cV_k(\kappa(\wp))\ar[r] & 0
	}
	\]
	whose maps are compatible with Hecke operators. Since $(k+1)-2>0$, Lemma \ref{LemPreGliss} implies $U=0$ on $\cV_{k+1}^{<1}(\kappa(\wp))$ 
	and
	thus we have
	\[
	\det(I-UX;\cV_{k+1}^{<1}(\kappa(\wp)))=1,
	\]
	which yields the assertion (\ref{PropOrdAgree-1}). Since Lemma \ref{LemWtRed} also gives the exact sequence
	\[
	\xymatrix{
		0\ar[r]& \cV_{k+p^d-1}^{<p^d-1}(\kappa(\wp))(\chi)\ar[r] & \cV_{k+p^d-1}(\kappa(\wp))(\chi)\ar[r]& \cV_k(\kappa(\wp))(\chi)\ar[r] & 0,
	}
	\]
	the assertion (\ref{PropOrdAgree-chi}) follows similarly.
\end{proof}

\begin{prop}\label{PropHida}
	$d(\Gamma_1^p(\frn,\wp^r),k,0)$ and $d(\Gamma_1^p(\frn,\wp^r),k,\chi,0)$ are independent of $r\geq 1$.
\end{prop}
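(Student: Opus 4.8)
The plan is to compare the two consecutive levels $\Gamma_r:=\Gamma_1^p(\frn,\wp^r)$ and $\Gamma_{r+1}:=\Gamma_1^p(\frn,\wp^{r+1})$ directly through harmonic cocycles, exploiting that $U=T_\wp$ lowers the level at $\wp$ once $r\geq 1$. As in the proof of Proposition \ref{PropOrdAgree}, $d(\Gamma_r,k,0)$ equals $\deg\det(I-UX;\cV_k(\kappa(\wp)))$, that is, the dimension of the Fitting (``ordinary'') component of $\cV_k(\kappa(\wp))$: the stable image $\bigcap_N\Img(U^N)$ on which $U$ is bijective, cut out by the idempotent $e$ that is a polynomial in $U$. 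Via $\Phi_\Gamma$ I will work throughout with $C_k^\har(\Gamma,\kappa(\wp))$ and the operator $T_\wp^\har$. Note that $\Gamma_{r+1}\subseteq\Gamma_r$, the only difference between them being the congruence on the lower-left entry ($c\equiv 0\bmod\wp^r$ versus $c\equiv 0\bmod\wp^{r+1}$); the diagonal condition is $\equiv 1\bmod\wp$ for every $r$ precisely because $\Theta=1+\wp A$ is the full subgroup.

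First I would record the two maps relating these levels. Since a $\Gamma_r$-equivariant cocycle is a fortiori $\Gamma_{r+1}$-equivariant, pullback gives an injection $\iota\colon C_k^\har(\Gamma_r,\kappa(\wp))\hookrightarrow C_k^\har(\Gamma_{r+1},\kappa(\wp))$. Because $\wp$ divides $\frn\wp^r$ at both levels, the same formula with the same representatives $\xi_\beta=\begin{pmatrix}1&\beta\\0&\wp\end{pmatrix}$, $\beta\in R_\wp$, defines $T_\wp^\har$ at each level, so $\iota$ is tautologically $U$-equivariant and hence $d(\Gamma_r,k,0)\leq d(\Gamma_{r+1},k,0)$. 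The content lies in the reverse inequality, for which I would prove the key claim: for $r\geq 1$,
\[
T_\wp^\har\bigl(C_k^\har(\Gamma_{r+1},\kappa(\wp))\bigr)\subseteq \iota\bigl(C_k^\har(\Gamma_r,\kappa(\wp))\bigr).
\]

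To prove the claim, take $\phi\in C_k^\har(\Gamma_{r+1},\kappa(\wp))$ and $\gamma=\begin{pmatrix}a&b\\c&d\end{pmatrix}\in\Gamma_r$, and run the usual Hecke bookkeeping $\xi_\beta\gamma=\gamma'\xi_{\beta'}$, where $\beta\mapsto\beta'$ is the bijection of $R_\wp$ given by $\beta'\equiv\beta+b\bmod\wp$ (using $c\equiv 0$, $a\equiv d\equiv 1\bmod\wp$) and $\gamma'=\xi_\beta\gamma\xi_{\beta'}^{-1}$. A direct computation gives $\gamma'=\begin{pmatrix}a+\beta c&*\\\wp c&d-\beta' c\end{pmatrix}$, whose lower-left entry $\wp c$ is $\equiv 0\bmod\wp^{r+1}$ and whose diagonal entries are $\equiv 1\bmod\wp$ (here $r\geq 1$ is essential), while reducing modulo $\frn$ shows $\gamma'\equiv\begin{pmatrix}1&*\\0&1\end{pmatrix}$; thus $\gamma'\in\Gamma_{r+1}$. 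Substituting into $T_\wp^\har(\phi)(\gamma e)=\sum_\beta\xi_\beta^{-1}\circ\phi(\xi_\beta\gamma e)$, using the $\Gamma_{r+1}$-equivariance of $\phi$ and that $\beta\mapsto\beta'$ is a bijection, yields $T_\wp^\har(\phi)(\gamma e)=\gamma\circ T_\wp^\har(\phi)(e)$, so $T_\wp^\har(\phi)$ is $\Gamma_r$-equivariant, proving the claim.

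Finally, any $x$ in the ordinary component of $C_k^\har(\Gamma_{r+1},\kappa(\wp))$ lies in $\Img(U)$, hence in $\iota(C_k^\har(\Gamma_r,\kappa(\wp)))$ by the claim; writing $x=\iota(y)$, the $U$-equivariance of $\iota$ and $ex=x$ force $\iota(ey)=e\iota(y)=x=\iota(y)$, so $ey=y$ and the preimage $y$ is itself ordinary. This gives $d(\Gamma_{r+1},k,0)\leq d(\Gamma_r,k,0)$, hence equality, and by induction independence of $r\geq 1$. For the character version I would fix, for each $\lambda$, a single $\eta_\lambda\in\mathit{SL}_2(A)$ satisfying the defining congruences modulo $\frn$ and modulo $\wp^{r+1}$ simultaneously, so that $\iota$ becomes $\langle\lambda\rangle_{\wp^r}$-equivariant and respects the decomposition into $\chi$-parts; as $U$ commutes with the diamond operators, the entire argument restricts verbatim to each $C_k^\har(\Gamma_r,\kappa(\wp))(\chi)$. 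The main obstacle is the key claim, and specifically checking that $\gamma'$ lands in the \emph{smaller} group $\Gamma_{r+1}$ rather than merely in $\Gamma_r$. This is exactly where both $r\geq 1$ and the fullness of $\Theta=1+\wp A$ enter: the diagonal of $\gamma'$ is perturbed by $\beta c\equiv 0\bmod\wp^r$, which stays in $1+\wp A$ modulo $\wp^{r+1}$ but would in general leave a proper $\Theta$, so the level-lowering, and hence the proposition, is special to $\Gamma_1^p$.
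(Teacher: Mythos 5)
Your proof is correct and follows essentially the same route as the paper: your key claim that $T_\wp^\har$ carries $\Gamma_{r+1}$-equivariant cocycles into $\Gamma_r$-equivariant ones is exactly the paper's map $s$ coming from the double coset $\Gamma_{r+1}\begin{pmatrix}1&0\\0&\wp\end{pmatrix}\Gamma_r=\coprod_\beta\Gamma_{r+1}\xi_\beta$, and both arguments then conclude by noting that $\iota$ is $U$-equivariant and restricts to an isomorphism on the ordinary parts (with the same remark about choosing $\eta_\lambda$ compatibly for the $\chi$-part). You merely spell out the matrix verification that the paper leaves implicit.
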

\begin{proof}
	Put $\Gamma_r=\Gamma_1^p(\frn,\wp^r)$. Let $\bar{\kappa}$ be an algebraic closure of $\kappa(\wp)$.
	We reduce ourselves to showing that the multiplicities of non-zero eigenvalues of $U$ acting on
	$C^\har_k(\Gamma_r,\bar{\kappa})$ and $C^\har_k(\Gamma_r,\bar{\kappa})(\chi)$ are independent of $r$. 
	These are the same as the dimensions of the generalized
	eigenspaces 
	\[
	C^\har_k(\Gamma_r,\bar{\kappa})^\ord,\quad C^\har_k(\Gamma_r,\bar{\kappa})(\chi)^\ord
	\] 
	of non-zero eigenvalues, respectively.
	
	Since any $c\in C_k^\har(\Gamma_r,\bar{\kappa})$ is also $\Gamma_{r+1}$-equivariant, we have the natural inclusion
	\[
	\iota: C^\har_k(\Gamma_r,\bar{\kappa})\to C^\har_k(\Gamma_{r+1},\bar{\kappa}).
	\]
	Since we have
	\[
	\Gamma_{r+1}\begin{pmatrix}
	1&0\\0&\wp
	\end{pmatrix}\Gamma_r=\coprod_{\beta\in R_\wp} \Gamma_{r+1}\xi_\beta,\quad \xi_\beta=\begin{pmatrix}
	1&\beta\\0&\wp
	\end{pmatrix},
	\]
	we obtain a map $s:C^\har_k(\Gamma_{r+1},\bar{\kappa})\to C^\har_k(\Gamma_{r},\bar{\kappa})$ by
	\[
	s(c)(e)=\sum_{\beta\in R_\wp}\xi_\beta^{-1}\circ c(\xi_\beta(e)),
	\]
	which makes the following diagram commutative.
	\[
	\xymatrix{
		C_k^\har(\Gamma_r,\bar{\kappa})\ar[r]^{\iota}\ar[d]_U & C^\har_k(\Gamma_{r+1},\bar{\kappa})\ar[d]^U\ar[ld]_s\\
		C_k^\har(\Gamma_r,\bar{\kappa})\ar[r]_{\iota}& C^\har_k(\Gamma_{r+1},\bar{\kappa})
	}
	\]
	From this we see that $\iota$ and $s$ commute with $U$ and, since $U$ is isomorphic on $C^\har_k(\Gamma_r,\bar{\kappa})^\ord$, the map $\iota$ gives 
	an isomorphism
	\[
	\iota^\ord: C^\har_k(\Gamma_r,\bar{\kappa})^\ord\to C^\har_k(\Gamma_{r+1},\bar{\kappa})^\ord.
	\]
	This settles the assertion on $d(\Gamma_1^p(\frn,\wp^r),k,0)$. Moreover, since the diamond operator $\langle\lambda\rangle_{\wp^r}$ is independent of the 
	choice of $\eta_\lambda$ satisfying (\ref{EqnDefEta}), we also have 
	\[
	\langle\lambda\rangle_{\wp^{r+1}}\circ \iota=\iota\circ \langle\lambda\rangle_{\wp^r}.
	\]
	Since $U$ commutes with diamond operators, the map $\iota^\ord$ also induces an isomorphism
	\[
	C^\har_k(\Gamma_r,\bar{\kappa})(\chi)^\ord\to C^\har_k(\Gamma_{r+1},\bar{\kappa})(\chi)^\ord,
	\]
	from which the assertion on $d(\Gamma_1^p(\frn,\wp^r),k,\chi,0)$ follows.
\end{proof}


\subsection{Representing matrix of $U$}\label{SubsecUop}

Let $E/K_\wp$ be a finite extension of complete 
valuation fields. We extend the normalized $\wp$-adic valuation $v_\wp$ naturally to $E$. We denote by $\cO_E$ the integer ring of $E$.

\begin{lem}\label{LemVFree}
	Suppose that $\frn\wp$ has a prime factor $\pi$ of degree one. Then the right $\bZ[\Gamma_1^\Theta(\frn,\wp^r)]$-module $\St$ 
	is free of rank $[\Gamma_1(\pi):\Gamma_1^\Theta(\frn,\wp^r)]$, where the rank is independent of the choice of such $\pi$.
	\end{lem}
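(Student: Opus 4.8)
The plan is to establish freeness of $\St$ as a right $\bZ[\Gamma_1^\Theta(\frn,\wp^r)]$-module by leveraging the split exact sequence (\ref{EqnExactSt}) and a careful analysis of the $\Gamma$-action on the Bruhat–Tits tree when $\frn\wp$ has a degree-one prime factor $\pi$. Since $\St$ is already known (from \S\ref{SubsecSt}) to be a finitely generated projective right $\bZ[\Gamma]$-module sitting inside
\[
0\to \St\to \bZ[\bar{\cT}_1^{o,\st}]\xrightarrow{\partial_\Gamma}\bZ[\cT_0^{\st}]\to 0,
\]
and since both outer terms are free of finite rank over $\bZ[\Gamma]$ with $\Gamma=\Gamma_1^\Theta(\frn,\wp^r)$, the first task is to compute the two ranks and subtract. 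The rank of $\St$ will then equal $|\Gamma\backslash\cT_1^{o,\st}/\{\pm1\}|-|\Gamma\backslash\cT_0^{\st}|$, which is an Euler-characteristic count on the quotient graph $\Gamma\backslash\cT$.

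\emph{First} I would reduce to the case $\Gamma=\Gamma_1(\pi)$ and use the degree-one hypothesis to show directly that $\St$ is free over $\bZ[\Gamma_1(\pi)]$ of some rank $N$. The key geometric input is that when $\pi$ has degree one, the quotient graph $\Gamma_1(\pi)\backslash\cT$ is a tree (or at least has controlled first Betti number), so that the quotient of the stable part has a free resolution making the projective $\St$ actually free; concretely, freeness of a finitely generated projective module over $\bZ[\Gamma]$ here follows because $\St$ is the kernel of a surjection between two finite-rank free modules and the quotient graph geometry forces stable freeness to upgrade to freeness. \emph{Then} I would pass from $\Gamma_1(\pi)$ to the finite-index subgroup $\Gamma_1^\Theta(\frn,\wp^r)\subseteq\Gamma_1(\pi)$: restriction of scalars along $\bZ[\Gamma_1^\Theta(\frn,\wp^r)]\hookrightarrow\bZ[\Gamma_1(\pi)]$ sends a free $\bZ[\Gamma_1(\pi)]$-module of rank $N$ to a free $\bZ[\Gamma_1^\Theta(\frn,\wp^r)]$-module of rank $N\cdot[\Gamma_1(\pi):\Gamma_1^\Theta(\frn,\wp^r)]$, because $\bZ[\Gamma_1(\pi)]$ is itself free of rank $[\Gamma_1(\pi):\Gamma_1^\Theta(\frn,\wp^r)]$ over the subgroup ring. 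This is exactly where the index in the statement enters, and it also makes transparent why the rank is independent of the choice of $\pi$: the intrinsic rank $N$ over $\bZ[\Gamma_1(\pi)]$ must itself be $1$ (or a fixed value), so the final rank is governed only by the index.

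\emph{The main obstacle} I expect is verifying that the projective right $\bZ[\Gamma_1(\pi)]$-module $\St$ is genuinely \emph{free} and not merely stably free, and pinning down that its rank is exactly $1$. Projectivity alone does not give freeness over an arbitrary group ring, so I would exploit the specific structure of $\Gamma_1(\pi)$ for a degree-one prime: such groups are free (up to $p$-torsion, which the $p'$-torsion-free hypothesis and the passage to stable edges remove), by Nagao's theorem / the theory of groups acting on trees (\cite[Ch.~II]{Se}), and over the integral group ring of a free group every finitely generated projective module is free. This structural fact is what converts the Euler-characteristic rank count into an actual basis. Once freeness and rank over $\bZ[\Gamma_1(\pi)]$ are secured, the index computation is routine and yields the claimed rank $[\Gamma_1(\pi):\Gamma_1^\Theta(\frn,\wp^r)]$, with independence from $\pi$ following from the intrinsic normalization.
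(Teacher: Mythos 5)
Your overall architecture — first understand $\St$ over $\bZ[\Gamma_1(\pi)]$, then restrict scalars to the finite-index subgroup $\Gamma_1^\Theta(\frn,\wp^r)$ to pick up the factor $[\Gamma_1(\pi):\Gamma_1^\Theta(\frn,\wp^r)]$ — matches the paper, and the restriction-of-scalars step is fine. But the step you yourself flag as the main obstacle is where your argument has a genuine gap. You propose to get freeness of the projective module $\St$ over $\bZ[\Gamma_1(\pi)]$ by arguing that $\Gamma_1(\pi)$ is a free group (via Nagao/Serre) and that projectives over integral group rings of free groups are free. This does not work as stated: $\Gamma_1(\pi)$ contains the unipotent matrices $\begin{pmatrix}1&b\\0&1\end{pmatrix}$, which have order $p$, so it is not free; Bass--Serre theory only exhibits it as a free product of finite $p$-groups (the vertex stabilizers) with a free group, and "every f.g.\ projective is free" is not available over such a group ring. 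Your parenthetical that the $p$-torsion is "removed" by passing to stable edges conflates the structure of the group with the structure of the module. Moreover, even your Euler-characteristic rank count $|\Gamma\backslash\cT_1^{o,\st}/\{\pm1\}|-|\Gamma\backslash\cT_0^{\st}|$ is left uncomputed: you never use the degree-one hypothesis to pin down these two numbers.

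The paper's route is much more direct and avoids any projective-to-free upgrade. Using the explicit fundamental domain for $\Gamma_1(\pi)\backslash\cT$ from \cite[\S7]{LM} (this is exactly where $\deg\pi=1$ enters), one checks that there is \emph{no} $\Gamma_1(\pi)$-stable vertex and exactly \emph{one} $\Gamma_1(\pi)$-stable unoriented edge. Hence $\bZ[\cT_0^{\st}]=0$, the boundary map in (\ref{EqnExactSt}) is the zero map, and $\St=\bZ[\bar{\cT}_1^{o,\st}]$ is literally the free right $\bZ[\Gamma_1(\pi)]$-module on the single edge-orbit representative — free of rank one by inspection, not by a structure theorem for projectives. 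From there your restriction-of-scalars step gives the claimed rank, and independence of $\pi$ follows either from your observation that the rank of $\St$ is intrinsic, or, as in the paper, from the index formula $[\Gamma_1(\pi):\Gamma_1^\Theta(\frn,\wp^r)]=[\mathit{SL}_2(A):\Gamma_1^\Theta(\frn,\wp^r)]\,[\mathit{SL}_2(\bF_q):\{\text{upper unipotents}\}]^{-1}$. To repair your write-up you need to replace the "free group" argument with this explicit computation of the stable part of the quotient graph.
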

	\begin{proof}
		Note that, from $\Gamma_1^\Theta(\frn,\wp^r)\subseteq \Gamma_1(\frn\wp)$, 
		we see that the former is a subgroup of $\Gamma_1(\pi)$.
		We can show that a fundamental domain of $\Gamma_1(\pi)\backslash \cT$ is the same as the picture of 
		\cite[\S7]{LM}, and that it has no $\Gamma_1(\pi)$-stable vertex and only one $\Gamma_1(\pi)$-stable (unoriented) edge. 
		By (\ref{EqnExactSt}), the right $\bZ[\Gamma_1(\pi)]$-module $\St$ is free of rank one. Thus the right $\bZ[\Gamma_1^\Theta(\frn,\wp^r)]$-module 
		$\St$ is free of rank $[\Gamma_1(\pi):\Gamma_1^\Theta(\frn,\wp^r)]$. Since we have
		\[
		[\Gamma_1(\pi):\Gamma_1^\Theta(\frn,\wp^r)]=[\mathit{SL}_2(A):\Gamma_1^\Theta(\frn,\wp^r)]\left[\mathit{SL}_2(\bF_q):\left\{\begin{pmatrix}
		1&*\\0&1
		\end{pmatrix}\right\}\right]^{-1},
		\]
		the rank is independent of $\pi$.
	\end{proof}

In the sequel, we assume that $\frn\wp$ has a prime factor $\pi$ of degree one. Under this assumption,
Lemma \ref{LemVFree} implies that the right $\bZ[\Gamma_1^\Theta(\frn,\wp^r)]$-module $\St$ is free 
of rank $d$, where we put
\[
d=[\Gamma_1(\pi):\Gamma_1^\Theta(\frn,\wp^r)].
\]
Hence, for any $A$-algebra $B$, the $B$-module $\cV_k(B)$ is free of rank $d(k-1)$.
We fix an ordered basis $\frB_k$ of the free $A$-module $\cV_k(A)$, as follows.
Take an ordered basis $(s_1,\ldots,s_d)$ of the right $\bZ[\Gamma_1^\Theta(\frn,\wp^r)]$-module $\St$.
The set 
\[
\frB_k=\{ v_{i,j}=s_i\otimes (X^jY^{k-2-j})^\vee\mid 1\leq i\leq d,\ 0\leq j\leq k-2\} 
\]
forms a basis of the $A$-module $\cV_{k}(A)$, and we order it as
\[
v_{1,0},v_{2,0},\ldots,v_{d,0},v_{1,1},v_{2,1},\ldots,v_{d,1},v_{1,2},\ldots.
\]
For any $A$-algebra $B$, the ordered basis of the $B$-module $\cV_k(B)$
induced by $\frB_k$ is also denoted abusively by $\frB_k$.
We denote by $U^{(k)}$ the representing matrix of $U$ acting on the $\cO_E$-module $\cV_{k}(\cO_E)$ with respect 
to 
the ordered basis $\frB_k$. Then Lemma \ref{LemPreGliss} gives
\begin{equation}\label{EqnGliss}
U(v_{i,j})\in \wp^{k-2-j}\cV_k(\cO_E).
\end{equation}

In order to study perturbation of $U^{(k)}$, we use the following lemma of \cite{Ked}. Note that the assumption $B\in \mathit{GL}_n(F)$ there is superfluous. 
\begin{lem}[\cite{Ked}, Proposition 4.4]\label{LemKedElDiv}
	Let $L$ be any positive integer and $A,B\in M_L(\cO_E)$. Let $s_1\leq s_2\leq\cdots \leq s_L$ be the elementary divisors of $A$. 
	Namely, they are the normalized $\wp$-adic 
	valuations of diagonal entries of the Smith normal form of $A$. Let $s'_1\leq s'_2\leq\cdots \leq s'_L$ be the elementary divisors of $AB$. Then we have
	\[
	s'_i\geq s_i \quad\text{for any }i.
	\]
	The same inequality also holds for the elementary divisors of $BA$.
	\end{lem}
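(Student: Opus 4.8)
The plan is to avoid working with minors directly, since the Cauchy--Binet formula together with the integrality of $B$ only yields the weak inequality $\sum_{j=1}^{i}s'_j\geq \sum_{j=1}^{i}s_j$ on partial sums, which is strictly weaker than the termwise bound $s'_i\geq s_i$ we actually want. Instead I would reinterpret the elementary divisors module-theoretically: writing a Smith normal form $A=UDV$ with $U,V\in \mathit{GL}_L(\cO_E)$ and $D=\diag(\wp^{s_1},\ldots,\wp^{s_L})$ (with the convention $s_i=+\infty$ for a vanishing diagonal entry, so that singular $A$ are covered and the $\mathit{GL}_L(F)$ hypothesis is indeed unnecessary), one obtains a canonical isomorphism $\Coker(A)\cong \bigoplus_i \cO_E/\wp^{s_i}$, and likewise for $AB$.

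First I would observe the inclusion of images $\Img(AB)=A(B\cO_E^L)\subseteq A(\cO_E^L)=\Img(A)$, which holds precisely because $B\in M_L(\cO_E)$. Passing to cokernels, this produces a surjection of finitely generated $\cO_E$-modules $\Coker(AB)\twoheadrightarrow \Coker(A)$. Thus the statement $s'_i\geq s_i$ is reduced to the following purely module-theoretic claim: if $N\twoheadrightarrow N'$ is a surjection of $\cO_E$-modules, each presented as a quotient of $\cO_E^L$ and hence carrying exactly $L$ invariant factors $t_1\leq\cdots\leq t_L$ and $t'_1\leq\cdots\leq t'_L$ (padded by zeros, with $+\infty$ allowed), then $t_i\geq t'_i$ for all $i$.

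To prove this claim I would compare graded pieces. Since $N\twoheadrightarrow N'$ is surjective, so is $\wp^{j}N/\wp^{j+1}N\twoheadrightarrow \wp^{j}N'/\wp^{j+1}N'$ for every $j\geq 0$, whence
\[
\#\{i\mid t_i>j\}=\dim_{\kappa_E}\wp^{j}N/\wp^{j+1}N\geq \dim_{\kappa_E}\wp^{j}N'/\wp^{j+1}N'=\#\{i\mid t'_i>j\},
\]
where $\kappa_E$ is the residue field of $\cO_E$. A short counting argument then shows that the family of inequalities $\#\{i\mid t_i>j\}\geq \#\{i\mid t'_i>j\}$ for all $j$ is equivalent to the termwise domination $t_i\geq t'_i$: if some $t_{i_0}<t'_{i_0}$, taking $j=t_{i_0}$ forces $\#\{i\mid t_i>j\}\leq L-i_0<\#\{i\mid t'_i>j\}$, a contradiction. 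Applying this with $N=\Coker(AB)$ and $N'=\Coker(A)$ gives $s'_i\geq s_i$.

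Finally, for the inequality with $BA$ in place of $AB$ the same image inclusion is unavailable, since $\Img(BA)$ need not lie in $\Img(A)$; here I would instead reduce to the case already handled by transposing. As the Smith normal form is stable under transpose (if $A=UDV$ then $A^{\mathrm t}=V^{\mathrm t}DU^{\mathrm t}$ with $V^{\mathrm t},U^{\mathrm t}\in \mathit{GL}_L(\cO_E)$), the elementary divisors of $BA$ coincide with those of $(BA)^{\mathrm t}=A^{\mathrm t}B^{\mathrm t}$, and those of $A$ with those of $A^{\mathrm t}$; the already-proved bound for right multiplication by the integral matrix $B^{\mathrm t}$ then yields the claim. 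The main obstacle is recognizing that elementary-divisor monotonicity is genuinely a termwise statement that the minor and determinant calculus does not see directly, and that the right leverage is the surjection of cokernels combined with the graded-dimension comparison.
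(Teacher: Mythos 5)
Your proof is correct. Note first that the paper offers no proof of this lemma at all: it is quoted directly from Kedlaya with only the remark that the hypothesis $B\in \mathit{GL}_L(F)$ there is superfluous, so there is no internal argument to compare against. Your route --- the inclusion $\Img(AB)\subseteq\Img(A)$ giving a surjection $\Coker(AB)\twoheadrightarrow\Coker(A)$, followed by the graded-piece count $\dim_{\kappa_E}\bigl(\pi^jN/\pi^{j+1}N\bigr)=\#\{i\mid t_i>j\}$ and the transposition trick for $BA$ --- is a clean, self-contained justification, and it has the added virtue of handling singular $A$ and $AB$ via the $+\infty$ convention, which substantiates exactly the paper's claim that the invertibility hypothesis can be dropped. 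You are also right that the naive Cauchy--Binet/minor calculus only controls the partial sums $\sum_{j\leq i}s'_j$, which does not imply termwise domination. One small notational caution: since $E/K_\wp$ may be ramified, the $s_i$ lie in $\frac{1}{e}\mathbb{Z}$ rather than $\mathbb{Z}$, so ``$\wp^{s_i}$'' and the filtration steps ``$\wp^jN$'' should be read in terms of a uniformizer $\pi_E$ of $\cO_E$ and the valuation $e\cdot v_\wp$; rescaling by $e$ at the end recovers the stated inequalities and changes nothing in the argument.
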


\begin{cor}\label{CorElDiv}
	Suppose that $\frn\wp$ has a prime factor $\pi$ of degree one. Put $d=[\Gamma_1(\pi):\Gamma_1^\Theta(\frn,\wp^r)]$. Let $s_1\leq s_2\leq\cdots \leq 
	s_{d(k-1)}$ 
	be the elementary divisors of $U^{(k)}$. Then we have
	\[
	s_i\geq \left\lfloor\frac{i-1}{d}\right\rfloor.
	\]
	\end{cor}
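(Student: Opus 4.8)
The plan is to exploit the column divisibility of $U^{(k)}$ recorded in (\ref{EqnGliss}) together with Kedlaya's perturbation lemma (Lemma \ref{LemKedElDiv}), thereby reducing the estimate to the trivial computation of the elementary divisors of a diagonal matrix. First I would read off from the chosen ordering of $\frB_k$ how the exponent appearing in (\ref{EqnGliss}) is distributed across the columns of $U^{(k)}$. Since the basis is ordered so that the vectors $v_{1,j},\ldots,v_{d,j}$ occupy the positions $dj+1,\ldots,dj+d$, the column at position $c$ corresponds to $v_{i,j}$ with $j=\lfloor(c-1)/d\rfloor$; by (\ref{EqnGliss}), every entry of this column is divisible by $\wp^{\,k-2-\lfloor(c-1)/d\rfloor}$.

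Next I would factor out these column divisibilities. Setting $L=d(k-1)$ and $D=\diag(\wp^{e_1},\ldots,\wp^{e_L})$ with $e_c=k-2-\lfloor(c-1)/d\rfloor$, the divisibility above lets me write $U^{(k)}=M'D$ with $M'\in M_L(\cO_E)$, obtained by dividing each column of $U^{(k)}$ by the corresponding $\wp^{e_c}$. Applying Lemma \ref{LemKedElDiv} with $A=D$ and $B=M'$, so that $BA=M'D=U^{(k)}$, gives that the elementary divisors of $U^{(k)}$ dominate those of $D$ term by term.

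Finally I would compute the elementary divisors of $D$. As $c$ runs over a block of $d$ consecutive positions the exponent $e_c$ is constant, and over all blocks it attains each value in $\{0,1,\ldots,k-2\}$ exactly $d$ times; sorting this multiset increasingly yields the sequence $t_i=\lfloor(i-1)/d\rfloor$. Since for a diagonal matrix the elementary divisors are precisely the sorted valuations of its diagonal entries, the term-by-term domination gives $s_i\geq t_i=\lfloor(i-1)/d\rfloor$, as claimed. The argument is essentially bookkeeping, and I expect no serious obstacle; the only points that demand care are applying Lemma \ref{LemKedElDiv} in the correct direction (the factorization places the diagonal factor on the right, so one must invoke the $BA$ form of the inequality rather than the $AB$ form) and verifying that the floor produced by the ordering of $\frB_k$ coincides exactly with the floor in the sorted exponent sequence.
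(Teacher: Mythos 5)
Your argument is correct and is essentially the paper's own proof: both factor $U^{(k)}$ as a matrix over $\cO_E$ times the diagonal matrix $\diag(\wp^{k-2},\ldots,\wp^{k-2},\ldots,\wp,\ldots,\wp,1,\ldots,1)$ using (\ref{EqnGliss}) and the ordering of $\frB_k$, and then invoke Lemma \ref{LemKedElDiv} to compare elementary divisors with those of the diagonal factor. Your extra care about using the $BA$ form of the lemma and about matching the floor function to the sorted exponents is exactly the bookkeeping the paper leaves implicit.
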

\begin{proof}
	By (\ref{EqnGliss}), the matrix $U^{(k)}$ can be written as 
	\[
	U^{(k)}=B\diag(\wp^{k-2},\ldots,\wp^{k-2},\ldots,\wp,\ldots,\wp,1,\ldots,1),
	\]
	where $B\in M_{d(k-1)}(\cO_E)$ and the diagonal entries of the last matrix are $\{\wp^j\mid 0\leq j \leq k-2\}$, each with multiplicity $d$. Then the 
	corollary 
	follows from Lemma \ref{LemKedElDiv}.
\end{proof}

\begin{cor}\label{CorWindow}
	Let $n\geq 0$ be any non-negative integer. Then, for some matrices $B_1,B_2,B_3,B_4$ with entries in $\cO_E$, we have
	\[
	U^{(k+p^n)}= \left(\begin{array}{c|c}
	\wp^{k-1}B_1 & B_2\\\hline
	\wp^{p^n}B_3 & U^{(k)}+\wp^{p^n}B_4
	\end{array}\right).
	\]
	\end{cor}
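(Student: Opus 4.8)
The plan is to establish the four blocks by reducing modulo $\wp^{p^n}$ and combining the weight reduction machinery of \S\ref{SubsecWtRed} with the divisibility estimate (\ref{EqnGliss}). Order the basis $\frB_{k+p^n}$ of $\cV_{k+p^n}(\cO_E)$ as in \S\ref{SubsecUop}, so that the first $dp^n$ vectors $v_{i,j}$ (those with $0\leq j<p^n$) are precisely a basis of the submodule $\cV_{k+p^n}^{<p^n}(\cO_E)=\ker(1\otimes\rho_{k,p^n})$, while the remaining $d(k-1)$ vectors (those with $p^n\leq j\leq k+p^n-2$) are carried by $1\otimes\rho_{k,p^n}$ onto $\frB_k$. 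Since $1\otimes\rho_{k,p^n}$ sends $v_{i,j}$ to $s_i\otimes(X^{j-p^n}Y^{k-2-(j-p^n)})^\vee$ for $j\geq p^n$, and the map $j\mapsto j-p^n$ preserves the ordering-by-$j$-then-$i$, this identification is moreover order-preserving. Writing $U^{(k+p^n)}$ in the corresponding block form with top block of size $dp^n$ and bottom block of size $d(k-1)$, I must show that the top-left block is divisible by $\wp^{k-1}$, the bottom-left block by $\wp^{p^n}$, and that the bottom-right block is congruent to $U^{(k)}$ modulo $\wp^{p^n}$.

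First I would treat the bottom row by base change to $\bar B=\cO_E/\wp^{p^n}\cO_E$, which is an $A_{p^n}$-algebra since $\wp^{p^n}$ maps to $0$. By Lemma \ref{LemWtRed} the submodule $\cV_{k+p^n}^{<p^n}(\bar B)$ is stable under $U=T_\wp$, so over $\bar B$ the columns indexed by the first $dp^n$ vectors have no component in the bottom block; lifting, this gives bottom-left block $\equiv 0\bmod\wp^{p^n}$, that is, equal to $\wp^{p^n}B_3$. Again by Lemma \ref{LemWtRed}, $1\otimes\rho_{k,p^n}$ identifies the quotient $\cV_{k+p^n}(\bar B)/\cV_{k+p^n}^{<p^n}(\bar B)$ with $\cV_k(\bar B)$ compatibly with $U$, and by the ordering chosen above this identification matches the residual basis with $\frB_k$. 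Because the formation of the representing matrix is compatible with the base change $\cO_E\to\bar B$ (the exactness and Hecke action of (\ref{EqnExactVLL}) being stable under base change), the induced action on the quotient is represented by $U^{(k)}\bmod\wp^{p^n}$, whence the bottom-right block equals $U^{(k)}+\wp^{p^n}B_4$.

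Next I would bound the top-left block by applying (\ref{EqnGliss}) in weight $k+p^n$: for any basis vector $v_{i,j}$ with $j<p^n$ one has $U(v_{i,j})\in\wp^{(k+p^n)-2-j}\cV_{k+p^n}(\cO_E)\subseteq\wp^{k-1}\cV_{k+p^n}(\cO_E)$, since $(k+p^n)-2-j\geq k-1$. Hence every entry of each such column, and in particular every entry of the top-left block, is divisible by $\wp^{k-1}$, giving top-left block $=\wp^{k-1}B_1$; the top-right block is simply $B_2$, for which nothing beyond having entries in $\cO_E$ is needed. Assembling the four blocks produces the asserted form.

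I expect the only delicate point to be the bookkeeping in the first paragraph: verifying that, under the fixed ordering of $\frB_{k+p^n}$, the initial $dp^n$ vectors constitute exactly a basis of $\cV_{k+p^n}^{<p^n}(\cO_E)$ and that $1\otimes\rho_{k,p^n}$ carries the remaining vectors to $\frB_k$ in the same order, so that the quotient action is literally represented by $U^{(k)}$. Once this compatibility of bases is pinned down, the three divisibility statements follow immediately from Lemma \ref{LemWtRed} and the estimate (\ref{EqnGliss}), with no further computation required.
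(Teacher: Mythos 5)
Your proof is correct and follows essentially the same route as the paper: the lower two blocks come from the Hecke-equivariance of the weight reduction map and the stability of its kernel over $A_{p^n}$-algebras (Lemma \ref{LemWtRed}), and the upper-left block comes from the divisibility estimate (\ref{EqnGliss}) applied in weight $k+p^n$ to the basis vectors with $j<p^n$. The only difference is that you spell out the bookkeeping of the ordered basis $\frB_{k+p^n}$ and the base change to $\cO_E/\wp^{p^n}\cO_E$, which the paper leaves implicit.
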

\begin{proof}
	By Lemma \ref{LemWtRed}, the lower right block is congruent to $U^{(k)}$ and the lower left block is zero modulo $\wp^{p^n}$. By (\ref{EqnGliss}), the 
	entries on the upper left block are divisible by $\wp^{k-1}$. This concludes the proof.
\end{proof}

For the $U$-operator acting on $\cV_k(\cO_E)(\chi)$, we have a similar description of its representing matrix $U^{(k)}_\chi$ as follows.

\begin{prop}\label{PropWindow0}
	Suppose that $\frn\wp$ has a prime factor $\pi$ of degree one. Put $d=[\Gamma_1(\pi):\Gamma_1^\Theta(\frn,\wp^r)]$.
	\begin{enumerate}
	\item\label{PropWindow0-ElDiv} For any integer $i\geq 0$, the $i$-th smallest elementary divisor $s_{\chi,i}$ of $U^{(k)}_\chi$ 
	satisfies 
	\[
	s_{\chi,i}\geq \left\lfloor\frac{i-1}{d}\right\rfloor.
	\]
	\item\label{PropWindow0-Window} 
	Let $n\geq 0$ be any non-negative integer. Then, with some bases of $\cV_k(\cO_E)(\chi)$ and $\cV_{k+p^n(q^d-1)}(\cO_E)(\chi)$, the representing matrices
	$U^{(k)}_\chi$ and $U^{(k+p^n(q^d-1))}_\chi$ of $U$ acting on them satisfies
	\[
	U^{(k+p^n(q^d-1))}_\chi= \left(\begin{array}{c|c}
	\wp^{k-1}B_1 & B_2\\\hline
	\wp^{p^n}B_3 & U^{(k)}_\chi+\wp^{p^n}B_4
	\end{array}\right)
	\]
	for some matrices $B_1,B_2,B_3,B_4$ with entries in $\cO_E$. 
	\end{enumerate}
\end{prop}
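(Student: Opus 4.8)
The plan is to transpose the arguments for Corollary \ref{CorElDiv} and Corollary \ref{CorWindow} into the $\chi$-isotypic summand $\cV_k(\cO_E)(\chi)$. The point that makes this possible is that $\vep_\chi$ is an idempotent with coefficients in $\kappa(\wp)\subseteq\cO_E$ which commutes with $U$, since the diamond operators commute with $T_Q$ by (\ref{EqnGammaEtaCoset}); hence $\cV_k(\cO_E)(\chi)$ is a saturated, $U$-stable direct summand of the free $\cO_E$-module $\cV_k(\cO_E)$, and $U^{(k)}_\chi$ is the corresponding block of $U^{(k)}$.

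For part (\ref{PropWindow0-ElDiv}) I would equip $\cV_k(\cO_E)$ with the degree filtration $\Fil_a=\bigoplus_{j\le k-2-a}\bigoplus_i\cO_E v_{i,j}$ attached to the ordered basis $\frB_k$; by (\ref{EqnGliss}) it satisfies $U(\Fil_a)\subseteq\wp^a\cV_k(\cO_E)$, and each graded piece $\gr_a$ is free of rank $d$. I would then put $\Fil_a(\chi)=\cV_k(\cO_E)(\chi)\cap\Fil_a$. As $\vep_\chi$ commutes with multiplication by $\wp^a$ one gets $U(\Fil_a(\chi))\subseteq\wp^a\cV_k(\cO_E)(\chi)$, and because $\Fil_a(\chi)\cap\Fil_{a+1}=\Fil_{a+1}(\chi)$ the natural map $\gr_a(\chi)\to\gr_a$ is injective, so $\gr_a(\chi)$ is free of rank $\le d$. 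Choosing an $\cO_E$-basis of $\cV_k(\cO_E)(\chi)$ adapted to $\Fil_\bullet(\chi)$ and assigning to each basis vector the index $a$ at which it appears, the divisibility above factors $U^{(k)}_\chi=B'\diag(\wp^{a_m})$ with at most $d$ of the $a_m$ equal to any given value; Lemma \ref{LemKedElDiv} then gives $s_{\chi,i}\ge\lfloor(i-1)/d\rfloor$.

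For part (\ref{PropWindow0-Window}) I would start from the weight-reduction sequence
\[
0\to\cV_{k+p^n(q^d-1)}^{<p^n(q^d-1)}(\cO_E)\to\cV_{k+p^n(q^d-1)}(\cO_E)\xrightarrow{1\otimes\rho}\cV_k(\cO_E)\to0.
\]
By Lemma \ref{LemWtRed} the shift $p^n(q^d-1)$ makes $1\otimes\rho$ commute with every $\langle\lambda\rangle_{\wp^r}$, so the whole sequence splits into $\chi$-parts; writing $1\otimes\rho$ as the $(q^d-1)$-fold iterate of the shift-$p^n$ reductions shows it is moreover $U$-equivariant modulo $\wp^{p^n}$, with kernel $U$-stable modulo $\wp^{p^n}$. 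I would then order adapted $\cO_E$-bases of $\cV_k(\cO_E)(\chi)$ and $\cV_{k+p^n(q^d-1)}(\cO_E)(\chi)$ placing the kernel block first. Exactly as in Corollary \ref{CorWindow}, the lower-left block is $\equiv0\bmod\wp^{p^n}$ because the kernel is $U$-stable mod $\wp^{p^n}$, the lower-right block is $\equiv U^{(k)}_\chi\bmod\wp^{p^n}$ because the induced $U$ on the quotient $\cong\cV_k(\cO_E)(\chi)$ is $U^{(k)}_\chi$ mod $\wp^{p^n}$, and, since the kernel equals $\Fil_{k-1}$ in weight $k+p^n(q^d-1)$, the part (\ref{PropWindow0-ElDiv}) form of (\ref{EqnGliss}) in the $\chi$-part forces the upper-left block to be divisible by $\wp^{k-1}$.

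The main obstacle is the interaction between the $\chi$-decomposition and the integral degree filtration: the operators $\langle\lambda\rangle_{\wp^r}$ do not preserve $\Fil_a$ on the nose, since the off-diagonal entries of $\eta_\lambda$ are only divisible by $\wp^r$, so one cannot simply apply $\vep_\chi$ to a filtration-adapted basis. The device of intersecting, $\Fil_a(\chi)=\cV_k(\cO_E)(\chi)\cap\Fil_a$, together with the injectivity $\gr_a(\chi)\hookrightarrow\gr_a$, sidesteps this and is exactly what keeps the per-piece rank bound $\le d$. A second, milder point is that one must invoke the two equivariances of $1\otimes\rho$ in different guises: diamond-equivariance holds directly only for the shift $p^n(q^d-1)$ and is \emph{not} inherited from the individual shift-$p^n$ maps, whereas $U$-equivariance is available only after reducing mod $\wp^{p^n}$ and iterating those shift-$p^n$ maps.
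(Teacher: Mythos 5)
Your part (\ref{PropWindow0-ElDiv}) is correct, but it is heavier than necessary and takes a genuinely different route from the paper. The paper's argument is one line: since $\cV_k(\cO_E)=\bigoplus_\chi \cV_k(\cO_E)(\chi)$ is a decomposition into $U$-stable direct summands, the elementary divisors of $U^{(k)}_\chi$ form a sub-multiset of those of $U^{(k)}$, so $s_{\chi,i}=s_{i'}$ for some $i'\geq i$ and Corollary \ref{CorElDiv} gives the bound directly. Your alternative --- intersecting the degree filtration with the $\chi$-part, noting $\Fil_a(\chi)\cap\Fil_{a+1}=\Fil_{a+1}(\chi)$ so that $\gr_a(\chi)\hookrightarrow\gr_a$ has rank $\leq d$, choosing an adapted basis and applying Lemma \ref{LemKedElDiv} --- does work, and it is a reasonable way to redo Corollary \ref{CorElDiv} inside the $\chi$-part; it just re-proves integrally what the direct-summand observation gives for free. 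Your remark that Hecke-equivariance of $1\otimes\rho_{k,p^n(q^d-1)}$ modulo $\wp^{p^n}$ is obtained by iterating the shift-$p^n$ maps is also correct and is indeed how the citation of Lemma \ref{LemWtRed} must be read.

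Part (\ref{PropWindow0-Window}) has a genuine gap, precisely at the upper-left block. First, a presentational issue: your exact sequence over $\cO_E$ is not defined, because $V^{<p^n(q^d-1)}_{k+p^n(q^d-1)}(\cO_E)$ is not a $\bZ[\Gamma_1^\Theta(\frn,\wp^r)]$-submodule and $\rho_{k,p^n(q^d-1)}$ is not $\Gamma$-equivariant over $\cO_E$ (Lemma \ref{LemWtRedV} needs $\wp^{p^n}\bar{B}=0$); both exist only after base change to $\cO_{E,p^n}$, which is why the paper works there and lifts bases afterwards. More seriously: your argument needs first-block basis vectors of $\cV_{k'}(\cO_E)(\chi)$, $k'=k+p^n(q^d-1)$, which \emph{simultaneously} (a) lie integrally in $\Fil_{k-1}(\chi)$, so that the $\chi$-part form of (\ref{EqnGliss}) gives divisibility by $\wp^{k-1}$, and (b) reduce modulo $\wp^{p^n}$ to a basis of the kernel $W=\cV^{<p^n(q^d-1)}_{k'}(\cO_{E,p^n})(\chi)$, which is what the two lower blocks require. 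You never establish that such vectors exist, and in general they need not: as you yourself observe, $\vep_\chi$ does not preserve $\Fil_{k-1}$ integrally, and the intersection $\Fil_{k-1}(\chi)=\cV_{k'}(\cO_E)(\chi)\cap\Fil_{k-1}$ can have $\cO_E$-rank strictly smaller than $\rank_{\cO_{E,p^n}}W$, so its reduction cannot surject onto $W$. (Toy model with $p$ odd: $M=\cO_E e_1\oplus\cO_E e_2$, involution $\sigma(e_1)=e_1+\wp^{p^n}e_2$, $\sigma(e_2)=-e_2$, $\Fil=\cO_E e_1$; the trivial isotypic part is $\cO_E(e_1+\tfrac{1}{2}\wp^{p^n}e_2)$ and meets $\Fil$ only in $0$, yet modulo $\wp^{p^n}$ the trivial part of $\Fil$ has rank one.) Weakening (a) to a congruence --- first-block vectors only in $\Fil_{k-1}+\wp^{p^n}\cV_{k'}(\cO_E)$ --- only yields divisibility by $\wp^{\min(k-1,p^n)}$, not the stated $\wp^{k-1}$.

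The paper resolves exactly this tension by giving up on (a): the first-block vectors are taken to be $\vep_\chi(v_{i,j})$ with $j<p^n(q^d-1)$, for a subset $\Sigma$ of indices chosen so that their images in $\cV_{k'}(\kappa_E)$ form a basis of $\cV^{<p^n(q^d-1)}_{k'}(\kappa_E)(\chi)$ (then Nakayama upgrades this to a basis of $W$ and to an $\cO_E$-basis of $\cV_{k'}(\cO_E)(\chi)$ together with lifts from weight $k$). These vectors do \emph{not} lie in the filtration; instead the divisibility comes from applying the projector \emph{after} $U$: since $U$ commutes with the diamond operators,
\[
U(\vep_\chi(v_{i,j}))=\vep_\chi(U(v_{i,j}))\in \vep_\chi\bigl(\wp^{k'-2-j}\cV_{k'}(\cO_E)\bigr)=\wp^{k'-2-j}\cV_{k'}(\cO_E)(\chi),
\]
and $k'-2-j\geq k-1$ for $j<p^n(q^d-1)$. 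This commutation trick is the missing idea; without it (or some substitute) your construction of the block decomposition does not close.
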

\begin{proof}
	We have the decomposition 
	\[
	\cV_k(\cO_E)=\bigoplus_\chi \cV_k(\cO_E)(\chi),
	\]
	where each summand is stable under Hecke operators. Thus any elementary divisor of $U^{(k)}_\chi$ is also an elementary divisor of $U^{(k)}$, and 
	$s_{\chi,i}$ equals the $i'$-th smallest elementary divisor $s_{i'}$ of $U^{(k)}$ with some $i'\geq i$. Hence the 
	assertion (\ref{PropWindow0-ElDiv}) follows from Corollary \ref{CorElDiv}.
	
	For (\ref{PropWindow0-Window}), put $m=q^d-1$, $k'=k+p^n m$ and consider the weight reduction map
	\[
	\rho=1\otimes \rho_{k,p^nm}:\cV_{k'}(\cO_{E,p^n})\to \cV_k(\cO_{E,p^n}).
	\]
	By Lemma \ref{LemWtRedV}, we can define the tensor product over $\bZ[\Gamma_1^\Theta(\frn,\wp^r)]$
	\[
	\cV_{k'}^{<p^n m}(\cO_{E,p^n})=\St\otimes_{\bZ[\Gamma_1^\Theta(\frn,\wp^r)]}V_{k'}^{<p^n m}(\cO_{E,p^n}),
	\]
	which sits in the split exact sequence of $\cO_{E,p^n}$-modules
	\[
	\xymatrix{
		0\ar[r] & \cV_{k'}^{<p^n m}(\cO_{E,p^n})\ar[r] & \cV_{k'}(\cO_{E,p^n})\ar[r]^-{\rho} &\cV_k(\cO_{E,p^n})\ar[r]& 0.
	}
	\]
	By Lemma \ref{LemWtRed}, the map $\rho$ is compatible with Hecke operators and $\langle \lambda\rangle_{\wp^r}$ for any $\lambda\in\kappa(\wp)^\times$.
	Thus the map $\rho$ also induces the split exact sequence
	\[
	\xymatrix{
		0\ar[r] & \cV_{k'}^{<p^n m}(\cO_{E,p^n})(\chi) \ar[r] & \cV_{k'}(\cO_{E,p^n})(\chi)\ar[r]^-{\rho} &\cV_k(\cO_{E,p^n})(\chi)\ar[r]& 0.
	}
	\]
	
	Let $\vep_\chi: \cV_{k'}(\cO_E)\to \cV_{k'}(\cO_E)(\chi)$ be the projector to the $\chi$-part. Let $\kappa_E$ be the residue field of $E$. 
	Consider the basis $v_{i,j}=s_i\otimes (X^jY^{k'-2-j})^\vee$ of $\cV_{k'}(\cO_E)$ as before and its image $\bar{v}_{i,j}$ 
	in $\cV_{k'}(\kappa_E)$. Note that, for any $j<p^n m$, the image of $\vep_\chi(v_{i,j})$ in $\cV_{k'}(\cO_{E,p^n})(\chi)$ lies in $\cV_{k'}^{<p^n m}
	(\cO_{E,p^n})(\chi)$.
	Since the set 
	\[
	\{\vep_\chi(\bar{v}_{i,j}) \mid 1\leq i\leq d,\ 0\leq j\leq p^n m-1\}
	\]
	spans the $\kappa_E$-vector space $\cV_{k'}^{<p^n m}(\kappa_E)(\chi)$, there exists a subset $\Sigma\subseteq [1,d]\times [0,p^n m-1]$ such that the 
	elements
	$\vep_\chi(\bar{v}_{i,j})$ for $(i,j)\in \Sigma$ form its basis. 
	
	Now take a lift $\frB_{k',\chi,k}$ of a basis of $\cV_k(\cO_{E,p^n})(\chi)$ by the composite 
	\[
	\cV_{k'}(\cO_{E})(\chi)\to \cV_{k'}(\cO_{E,p^n})(\chi)\overset{\rho}{\to}\cV_k(\cO_{E,p^n})(\chi).
	\]
	Since the image of the set
	\[
	\frB_{k',\chi}=\{\vep_\chi(v_{i,j})\mid (i,j)\in\Sigma\}\sqcup \frB_{k',\chi,k}
	\]
	in $\cV_{k'}(\kappa_{E})(\chi)$ forms its basis, we see that $\frB_{k',\chi}$ itself forms a basis of $\cV_{k'}(\cO_{E})(\chi)$. 
	Moreover, by Nakayama's lemma, the images of $\vep_\chi(v_{i,j})$ in $\cV_{k'}(\cO_{E,p^n})$ for $(i,j)\in \Sigma$ form a basis
	of $\cV_{k'}^{<p^n m}(\cO_{E,p^n})(\chi)$.

	Representing $U$ by the basis $\frB_{k',\chi}$, we see that the lower blocks of the resulting matrix are as stated in 
	(\ref{PropWindow0-Window}). 
	Moreover, since $U$ and $\langle \lambda\rangle_{\wp^r}$ commute with each other, (\ref{EqnGliss}) yields
	\[
	U(\vep_\chi(v_{i,j}))=\vep_\chi(U(v_{i,j}))\in \wp^{k'-2-j}\cV_{k'}(\cO_E)(\chi)
	\]
	for any $j<p^n m$, and thus the upper left block is divisible by $\wp^{k-1}$. This concludes the proof.
\end{proof}


\subsection{Perturbation}\label{SubsecPerturb}

Let $E/K_\wp$ be a finite extension inside $\bC_\wp$. 
Let $V$ be an $E$-vector space of finite dimension and $T:V\to V$ an $E$-linear endomorphism. For an eigenvector of $T$ with eigenvalue $\lambda\in \bC_\wp$, 
we refer to $v_\wp(\lambda)$ as its slope. For any rational number $a$, we denote by $d(T,a)$ the multiplicity of $T$-eigenvalues of slope $a$. If 
$B$ is the representing matrix of $T$ with some basis of $V$, we also denote it by $d(B,a)$.

\begin{prop}\label{PropPerturb}
	Let $d_0$, $n$ and $L$ be positive integers. Let $B\in M_{L}(\cO_E)$ be a matrix such that its $i$-th smallest elementary divisor $s_i$ satisfies $s_i\geq 
	\lfloor
	\frac{i-1}{d_0} \rfloor$ for any $i$. Put $\vep_0=d(B,0)$ and
	\[
	C_1(n,d_0,\vep_0)=p^n\left(\frac{4+d_0 p^n-d_0}{4+2d_0p^n-2\vep_0}\right)\in (0,p^n).
	\]
	Moreover, we put $q_1=r_1=0$ and for any $l\geq 2$, we write $q_l=\lfloor\frac{l-2}{d_0}\rfloor$ and $r_l=l-2-d_0q_l$. We define $C_2(n,d_0,\vep_0)$ as 
	\[
	\min\left\{\frac{2p^n+d_0q_l(q_l-1)+2q_l(r_l+1)}{2(l-\vep_0)}\ \middle|\ \vep_0< l\leq 1+d_0 p^n\right\}
	\]
	and put 
	\[
	C(n,d_0,\vep_0)=\min\{C_1(n,d_0,\vep_0),C_2(n,d_0,\vep_0)\}\in (0,p^n).
	\]
	Let $B'\in M_L(\cO_E)$ be any matrix satisfying $B'-B\in \wp^{p^n}M_L(\cO_E)$. 
	Let $a$ be any non-negative rational number satisfying 
	\[
	a<C(n,d_0,\vep_0).
	\]
	Then we have
	\[
	d(B,a)=d(B',a).
	\]
\end{prop}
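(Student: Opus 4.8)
The plan is to translate the statement into a comparison of Newton polygons of characteristic polynomials and then run a convexity (supporting-line) argument, the admissible slopes being governed exactly by the two constants $C_1$ and $C_2$. First I would set $P_B(X)=\det(XI-B)=\sum_{i=0}^{L}(-1)^i c_i(B)X^{L-i}$, so that $c_i(B)$ is the sum of the $i\times i$ principal minors of $B$; writing $v_i=v_\wp(c_i(B))$, the multiplicity $d(B,a)$ equals the horizontal length of the slope-$a$ segment of the Newton polygon $\mathrm{NP}(P_B)$ (the lower convex hull of the points $(i,v_i)$), and likewise for $B'$. Thus it suffices to show that $\mathrm{NP}(P_B)$ and $\mathrm{NP}(P_{B'})$ coincide on the region of slopes $\le a$. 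Since $d(B,0)=\vep_0$, the hull has a slope-$0$ segment of length $\vep_0$ ending at the vertex $(\vep_0,0)$, and I would anchor the whole argument at this vertex.

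Next I would record two valuation estimates. From the hypothesis $s_i\ge\lfloor (i-1)/d_0\rfloor$ and the standard fact that the minimal valuation of an $i\times i$ minor of $B$ equals $s_1+\cdots+s_i$, one gets $v_i\ge\tau_i:=\sum_{j=1}^i\lfloor (j-1)/d_0\rfloor$. This lower bound forces $\mathrm{NP}(P_B)$ to rise quadratically, so its marginal slope exceeds $p^n$ (hence exceeds $a$) once $l>1+d_0p^n$. The crucial second estimate bounds the coefficient perturbation: writing $B'=B+\wp^{p^n}E$ with $E\in M_L(\oee)$ and expanding each minor by multilinearity in its columns, the lowest-order term carries one factor $\wp^{p^n}$ together with an $(l-1)\times(l-1)$ minor of $B$, which yields $v_\wp(c_l(B')-c_l(B))\ge p^n+\tau_{l-1}$. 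At the extreme right of the relevant range one must also weigh in the terms with two or more perturbed columns, of valuation $\ge tp^n+\tau_{l-t}$; this is exactly the book-keeping that separates the $C_1$-regime from the $C_2$-regime.

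Then comes the supporting-line argument. Let $\delta_l=c_l(B')-c_l(B)$. I claim that if $v_\wp(\delta_l)>a(l-\vep_0)$ for every $l$, then every point $(l,v_\wp(c_l(B')))$ lies on or above the slope-$a$ line through $(\vep_0,0)$, and moreover the vertices of $\mathrm{NP}(P_B)$ on the slope-$\le a$ part are left unchanged (there $v_l\le a(l-\vep_0)<v_\wp(\delta_l)$, so $v_\wp(c_l(B'))=v_l$). Hence the two hulls agree for slopes $\le a$, giving $d(B,a)=d(B',a)$. Using $v_\wp(\delta_l)\ge p^n+\tau_{l-1}$, the required inequality holds as soon as $a<(p^n+\tau_{l-1})/(l-\vep_0)$ for all $l>\vep_0$. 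A direct computation shows $2\tau_{l-1}=d_0q_l(q_l-1)+2q_l(r_l+1)$, so this ratio is precisely the quantity minimized in the definition of $C_2$, while its value at the endpoint $l=2+d_0p^n$ simplifies to exactly $C_1$; the marginal-slope bound from the previous paragraph shows the ratio is increasing for $l\ge 2+d_0p^n$, so its minimum over all $l>\vep_0$ is attained within $\vep_0<l\le 2+d_0p^n$ and equals $\min\{C_1,C_2\}=C$. Thus $a<C$ is exactly what makes the argument go through.

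The main obstacle I anticipate is the second (perturbation) estimate together with the exact constant-matching: controlling $v_\wp(\delta_l)$ uniformly requires the multilinear expansion of determinants combined with the determinantal (elementary-divisor) valuation bound, and the boundary behaviour near $l=1+d_0p^n$ and $l=2+d_0p^n$ — where the one-perturbed-column term ceases to dominate — is delicate and is precisely what dictates the split into $C_1$ and $C_2$ and the floor-function expression $d_0q_l(q_l-1)+2q_l(r_l+1)$. By contrast, verifying the elementary-divisor minor bound (via Lemma \ref{LemKedElDiv} and exterior powers) and checking that $C,C_1,C_2\in(0,p^n)$ should be comparatively routine.
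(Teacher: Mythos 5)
Your overall strategy is the same as the paper's: compare the Newton polygons of $\det(I-BX)$ and $\det(I-B'X)$, bound the valuations of the coefficients from below via the elementary divisors, bound the valuation of the coefficient differences via a perturbation estimate, and check that the hypothesis $a<C(n,d_0,\vep_0)$ is exactly what makes the slope-$\le a$ parts of the two polygons coincide. Your identity $2\tau_{l-1}=d_0q_l(q_l-1)+2q_l(r_l+1)$ and your identification of $C_1$ as the value of the relevant ratio at $l=2+d_0p^n$ are both correct and match the paper's computation.

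The genuine gap is the perturbation estimate $v_\wp(\delta_l)\ge p^n+\tau_{l-1}$, which is false for large $l$ and which you then rely on for the whole range $l>\vep_0$. In the multilinear expansion, the term with $t$ perturbed columns is only bounded below by $tp^n+\tau_{l-t}$, and since
\[
(p^n+\tau_{l-1})-(tp^n+\tau_{l-t})=\sum_{j=l-t+1}^{l-1}\left\lfloor\tfrac{j-1}{d_0}\right\rfloor-(t-1)p^n,
\]
the one-column term ceases to dominate as soon as $\lfloor (l-2)/d_0\rfloor>p^n$. The correct uniform bound is $\min_{1\le t\le l}\{tp^n+\tau_{l-t}\}=p^n+\sum_{j=1}^{l-1}\min\{\lfloor (j-1)/d_0\rfloor,p^n\}$ — this is what the paper imports from Kedlaya, Theorem 4.4.2 — and for $l>1+d_0p^n$ it equals $\tfrac12 p^n(2l-d_0-d_0p^n)$, growing only \emph{linearly} in $l$, whereas your $p^n+\tau_{l-1}$ grows quadratically. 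Consequently your verification ``$a<(p^n+\tau_{l-1})/(l-\vep_0)$ for all $l$'' does not establish $v_\wp(\delta_l)>a(l-\vep_0)$ once $l$ exceeds roughly $1+d_0(p^n+1)$; you must instead compare the linear-growth bound against $a(l-\vep_0)$, and checking that this inequality holds for all $l>1+d_0p^n$ (using that the difference is increasing in $l$ because $a<p^n$, so that the binding case is $l=2+d_0p^n$) is exactly the computation that produces the constant $C_1$. You clearly see that the multi-column terms are responsible for the $C_1$/$C_2$ split, so this is a quantitative slip rather than a wrong approach, but as written the estimate you actually use is incorrect and the large-$l$ case is not covered. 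A smaller point: your claim that \emph{every} point $(l,v_\wp(c_l(B')))$ lies on or above the slope-$a$ line through $(\vep_0,0)$ is not literally what is needed; the paper instead runs a minimal-counterexample argument on the smallest slope $a$ where the multiplicities differ, looking at the right endpoint $(l,y)$ of the slope-$a$ segment, where $y\le a(l-\vep_0)<v_\wp(\delta_l)$ forces $v_\wp(b_l)=v_\wp(b'_l)$, and then applies the same reasoning symmetrically to the other polygon.
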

\begin{proof}
	We put
	\[
	P_B(X)=\det(I-BX)=\sum b_l X^l,\quad P_{B'}(X)=\det(I-B'X)=\sum b'_l X^l.
	\]
	Then $b_l$ is, up to a sign, the sum of principal $l\times l$ minors of $B$. Since $P_B\equiv P_{B'}\bmod \wp$, we have $d(B',0)=d(B,0)=\vep_0$. 
	From the assumption on elementary divisors, we see that if $i>d_0$, then any $i\times i$ minor of $B$ is divisible by $\wp$. This yields $\vep_0\leq d_0$.

	By \cite[Theorem 4.4.2]{Ked}, for any $l\geq 0$ we have
	\[
	v_\wp(b_l-b'_l)\geq p^n+\sum_{j=1}^{l-1}\min\left\{\left\lfloor
	\frac{j-1}{d_0} \right\rfloor,p^n\right\}.
	\]
	Here we mean that the second term of the right-hand side is zero for $l\leq 1$.
	Let $R$ be the right-hand side of the inequality. We claim that for any $l> \vep_0$, we have
	\[
	 a< C(n,d_0,\vep_0)\Rightarrow R> a(l-\vep_0).
	\]
	Indeed, when $l>1+d_0p^n$, we have
	\begin{align*}
	R&=p^n+\sum_{j=1}^{d_0p^n}\left\lfloor\frac{j-1}{d_0} \right\rfloor+\sum_{j=1+d_0p^n}^{l-1} p^n=p^n(l-d_0p^n)+\frac{1}{2}d_0p^n(p^n-1)\\
	&=\frac{1}{2}p^n(2l-d_0-d_0p^n).
	\end{align*}
	Then $R>a(l-\vep_0)$ if and only if
	\begin{equation}\label{EqnRLarge}
	(p^n-a)l-\frac{1}{2}p^nd_0(1+p^n)+a\vep_0>0.
	\end{equation}
	Since the condition $a<C(n,d_0,\vep_0)$ yields $p^n>a$, the left-hand side of (\ref{EqnRLarge}) is increasing with respect to $l$. 
	Thus (\ref{EqnRLarge}) holds for any $l>1+d_0p^n$ 
	if and only if it holds for 
	$l=2+d_0p^n$, which is equivalent to $a<C_1(n,d_0,\vep_0)$.
	
	On the other hand, when $l\leq 1+d_0p^n$, we have
	\begin{equation}\label{EqnRSmall}
	R=p^n+\frac{1}{2}d_0q_l(q_l-1)+q_l(r_l+1),
	\end{equation}
	from which the claim follows.

	Let $N_B$ and $N_{B'}$ be the Newton polygons of $P_B$ and $P_{B'}$, respectively. It suffices to show that the segments of $N_B$ and $N_{B'}$ with slope 
	less than $C(n,d_0,\vep_0)$ agree with each other. 
	Suppose the contrary and take the smallest slope $a<C(n,d_0,\vep_0)$ satisfying $d(B,a)\neq d(B',a)$.
	
	Let $(l,y)$ be the right endpoint of the segment of slope $a$ in either of $N_B$ or $N_{B'}$. 
	Since $d(B,0)=d(B',0)$, we have $a>0$ and $l>\vep_0$. Then the above claim yields
	\[
	y\leq a (l-\vep_0)<v_\wp(b_l-b'_l).
	\]
	Since $y\in \{v_\wp(b_l),v_\wp(b'_l)\}$, we have $v_\wp(b_l)=v_\wp(b'_l)$. Since $a$ is minimal, this implies that slope $a$ appears in both of $N_B$ and 
	$N_{B'}$. Applying the same argument to the right endpoint of the segment of slope $a$ in the other Newton polygon, we obtain $d(B,a)=d(B',a)$. This is the 
	contradiction.
\end{proof}

By a similar argument, we can show a slightly different perturbation result as follows. 

\begin{prop}\label{PropPerturbGM}
	With the notation in Proposition \ref{PropPerturb}, we suppose that the following 
	conditions hold.
	\begin{enumerate}
		\item\label{PropPerturbGM-large} If $p=2$, then $n\geq 3$ or $d_0-\vep_0\leq 1$.
		\item\label{PropPerturbGM-small} $2p^n> n(d_0n+2+d_0-2\vep_0)$.
	\end{enumerate}
	Then, for any non-negative rational number $a\leq n$, we have
	\[
	d(B,a)=d(B',a).
	\]
	\end{prop}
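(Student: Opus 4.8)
The plan is to re-run the Newton-polygon argument that proves Proposition \ref{PropPerturb} almost verbatim, replacing the threshold $C(n,d_0,\vep_0)$ by the uniform bound $a\le n$. Writing $P_B(X)=\det(I-BX)=\sum b_l X^l$ and $P_{B'}(X)=\sum b'_l X^l$, the only input coming from the matrices is the estimate of \cite[Theorem 4.4.2]{Ked},
\[
v_\wp(b_l-b'_l)\geq R(l):=p^n+\sum_{j=1}^{l-1}\min\left\{\left\lfloor\frac{j-1}{d_0}\right\rfloor,p^n\right\},
\]
together with $P_B\equiv P_{B'}\bmod\wp$, which gives $d(B,0)=d(B',0)=\vep_0$ and $\vep_0\le d_0$ as before. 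Exactly as in Proposition \ref{PropPerturb}, it suffices to prove the key inequality $R(l)>a(l-\vep_0)$ for every integer $l>\vep_0$ and every admissible slope $a$; since here $a\le n$ and the right-hand side is independent of $a$, this reduces to showing
\[
f(l):=R(l)-n(l-\vep_0)>0\quad\text{for all integers }l>\vep_0.
\]

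First I would record that $f$ is convex in $l$, its forward difference $f(l+1)-f(l)=\min\{\lfloor (l-1)/d_0\rfloor,p^n\}-n$ being nondecreasing. Since $p^n>n$ for all $p\ge 2$ and $n\ge 1$, this difference is negative exactly when $\lfloor (l-1)/d_0\rfloor<n$, i.e.\ for $l\le d_0 n$, and nonnegative for $l\ge d_0 n+1$; hence the minimum of $f$ over the integers is attained at $l^\ast=d_0 n+1$, which lies strictly below $1+d_0 p^n$, so $R(l^\ast)$ is computed by the ``small-$l$'' formula (\ref{EqnRSmall}). Summing the floor function in blocks of length $d_0$ gives $R(l^\ast)=p^n+\tfrac12 d_0 n(n-1)$, whence
\[
f(l^\ast)=p^n+\tfrac12 d_0 n(n-1)-n(d_0 n+1-\vep_0)=\tfrac12\bigl(2p^n-n(d_0 n+2+d_0-2\vep_0)\bigr),
\]
which is positive precisely under hypothesis (\ref{PropPerturbGM-small}). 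By convexity this gives $f(l)>0$ for all $l>\vep_0$, establishing the key inequality; note that the large-$l$ regime $l>1+d_0 p^n$ (the one governed by $C_1$ in Proposition \ref{PropPerturb}) needs no separate treatment, since the global minimum already sits at $l^\ast$.

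With the key inequality in hand, the conclusion follows by the minimal-counterexample argument of Proposition \ref{PropPerturb}: if some slope $a\le n$ had $d(B,a)\ne d(B',a)$, take the smallest such $a$; then $a>0$, and the right endpoint $(l,y)$ of the slope-$a$ segment of either Newton polygon satisfies $l>\vep_0$ and $y\le a(l-\vep_0)<R(l)\le v_\wp(b_l-b'_l)$, forcing $v_\wp(b_l)=v_\wp(b'_l)$ and, by minimality of $a$, a contradiction. I expect the genuine technical point to be the characteristic-two bookkeeping isolated in hypothesis (\ref{PropPerturbGM-large}): when $p=2$ and $n$ is small the slack in $p^n>n$ is minimal and the block-sum estimate above must be checked at the boundary integers near $l^\ast$ to secure the \emph{strict} inequality, and the dichotomy $n\ge 3$ or $d_0-\vep_0\le 1$ is exactly the room needed to close that degenerate range. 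Verifying this case is the main obstacle; the remainder is a direct transcription of the proof of Proposition \ref{PropPerturb}.
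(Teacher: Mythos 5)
Your argument is correct, and the heart of it---the convexity argument for $f(l)=R(l)-n(l-\vep_0)$---is a genuinely different and cleaner route than the paper's. The paper splits into the regimes $l>1+d_0p^n$ and $l\leq 1+d_0p^n$: in the first it reduces to $n<C_1(n,d_0,\vep_0)$, which is exactly where hypothesis (\ref{PropPerturbGM-large}) is consumed, and in the second it completes the square in $q_l$ to arrive at hypothesis (\ref{PropPerturbGM-small}). You instead note that the forward difference $f(l+1)-f(l)=\min\{\lfloor(l-1)/d_0\rfloor,p^n\}-n$ is nondecreasing and changes sign at $l^\ast=d_0n+1$ (using only $p^n>n$, valid for all $p\geq 2$ and $n\geq 1$), so the minimum of $f$ over integers $l>\vep_0$ is attained at $l^\ast$ (note $\vep_0\leq d_0<l^\ast$), and the identity $2f(l^\ast)=2p^n-n(d_0n+2+d_0-2\vep_0)$ shows this minimum is positive precisely under hypothesis (\ref{PropPerturbGM-small}); since $f$ is nondecreasing beyond $l^\ast$, the large-$l$ regime governed by $C_1$ in the paper needs no separate treatment. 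One thing you should state plainly rather than hedge about: your argument never invokes hypothesis (\ref{PropPerturbGM-large}), and the worry in your final paragraph that the $p=2$, $n<3$ case needs extra boundary bookkeeping is unfounded---a one-line check shows that for $p=2$ and $n\leq 2$ hypothesis (\ref{PropPerturbGM-small}) already forces $d_0=\vep_0$, so (\ref{PropPerturbGM-large}) is in fact implied by (\ref{PropPerturbGM-small}). The paper needs (\ref{PropPerturbGM-large}) only because it bounds the regime $l>1+d_0p^n$ by its own worst case $l=2+d_0p^n$ rather than by the global minimizer $l^\ast$. The Newton-polygon endgame is, as you say, a direct transcription of the proof of Proposition \ref{PropPerturb} and is fine.
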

\begin{proof}
	Let $R$ be as in the proof of Proposition \ref{PropPerturb}. We claim $R> n(l-\vep_0)$ for any $l>\vep_0$ under the assumptions (\ref{PropPerturbGM-large}) and  
	(\ref{PropPerturbGM-small}).
	
	Indeed, when $l>1+d_0p^n$, we have $R>n(l-\vep_0)$ for any such $l$ if and only if $n<C_1(n,d_0,\vep_0)$, namely
	\[
	d_0p^n\left(\frac{1}{2}p^n-n\right)+2(p^n-n)+n\vep_0>\frac{1}{2}d_0p^n.
	\]
	If $p\geq 3$ or $n\geq 3$, then we have $\frac{1}{2}p^n-n\geq \frac{1}{2}$ and the above inequality holds. If $p=2$ and $n<3$, 
it is equivalent to $d_0-\vep_0\leq 1$. Thus, under the condition (\ref{PropPerturbGM-large}), we have $R> n(l-\vep_0)$ in this case.
	
	Let us consider the case of $l\leq 1+d_0p^n$. Note that $l=1$ is allowed only if $\vep_0=0$, in which case the claim holds by $R=p^n>n$.
	For $l\geq 2$, by (\ref{EqnRSmall}) we have $R>n(l-\vep_0)$ if and only if
	\begin{align*}
	2p^n+d_0\left(q_l-n+\frac{r_l+1}{d_0}-\frac{1}{2}\right)^2-d_0\left(-n+\frac{r_l+1}{d_0}-\frac{1}{2}\right)^2>2n(r_l+2-\vep_0).
	\end{align*}
	Note $\frac{r_l+1}{d_0}-\frac{1}{2}\in [-\frac{1}{2},\frac{1}{2}]$. Since $q_l$ and $n$ are integers, we have
	\[
	d_0\left(q_l-n+\frac{r_l+1}{d_0}-\frac{1}{2}\right)^2\geq d_0\left(\frac{r_l+1}{d_0}-\frac{1}{2}\right)^2.
	\]
	Thus the above inequality holds if
	\[
	2p^n+d_0\left(\frac{r_l+1}{d_0}-\frac{1}{2}\right)^2-d_0\left(-n+\frac{r_l+1}{d_0}-\frac{1}{2}\right)^2>2n(r_l+2-\vep_0),
	\]
	which is equivalent to the condition (\ref{PropPerturbGM-small}) and the claim follows. Now the same reasoning as in the proof of Proposition 
	\ref{PropPerturb} shows $d(B,a)=d(B',a)$.
\end{proof}


\subsection{Dimension variation}\label{SubsecGVP}

For the $U$-operators acting on $\cV_k(K_\wp)$ and $\cV(K_\wp)(\chi)$, we denote $d(U,a)$ also by 
\[
d(k,a)=d(\Gamma_1^\Theta(\frn,\wp^r),k,a),\quad d(k,\chi,a)=d(\Gamma_1^\Theta(\frn,\wp^r),k,\chi,a),
\]
respectively. 
Note that they agree with the previously defined ones
for $S_k(\Gamma_1^\Theta(\frn,\wp^r))$ and $S_k(\Gamma_1^\Theta(\frn,\wp^r))(\chi)$.

Now the following theorems give generalizations of \cite[Theorem 1.1]{Ha_GVt}.

\begin{thm}\label{ThmDMFP}
	Suppose that $\frn\wp$ has a prime factor $\pi$ of degree one. Let $n\geq 1$ and $k\geq 2$ be any integers. 
	Put $d=[\Gamma_1(\pi):\Gamma_1^\Theta(\frn,\wp^r)]$ and $\vep=d(k,0)$. 
	Let $a$ be any non-negative rational number satisfying 
	\[
	a<\min\{C(n,d,\vep),k-1\}.
	\]
	Then, for any integer $k'\geq k$, we have
	\[
	k'\equiv k \bmod p^n \Rightarrow d(k',a)=d(k,a).
	\]
\end{thm}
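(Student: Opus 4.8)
The plan is to reduce to the one-step case $k'=k+p^n$ and then iterate. Writing $k'=k+mp^n$ with $m\geq 0$ (possible since $k'\geq k$ and $k'\equiv k\bmod p^n$), I would prove $d(k+p^n,a)=d(k,a)$ under the hypotheses and then apply this successively to the weights $k,k+p^n,\dots,k+(m-1)p^n$. The key preliminary fact that makes the iteration uniform is Proposition \ref{PropOrdAgree} (1): the slope-zero dimension is independent of the weight, so $d(\tilde k,0)=\vep$ for all $\tilde k\geq k$. Consequently $C(n,d,\vep)$ never changes along the induction, and since $a<k-1\leq \tilde k-1$, the hypothesis $a<\min\{C(n,d,\vep),\tilde k-1\}$ is preserved at each stage.

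For the one-step comparison I would begin with Corollary \ref{CorWindow}, which writes the representing matrix of $U$ in weight $k+p^n$ as
\[
U^{(k+p^n)}=\left(\begin{array}{c|c}\wp^{k-1}B_1 & B_2\\\hline \wp^{p^n}B_3 & U^{(k)}+\wp^{p^n}B_4\end{array}\right),
\]
with all $B_i$ having entries in $\cO_E$. Setting
\[
M_0=\left(\begin{array}{c|c}\wp^{k-1}B_1 & B_2\\\hline 0 & U^{(k)}\end{array}\right),
\]
we have $U^{(k+p^n)}-M_0\in\wp^{p^n}M_L(\cO_E)$. Being block upper triangular, $M_0$ satisfies $\det(I-M_0X)=\det(I-\wp^{k-1}B_1X)\det(I-U^{(k)}X)$. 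Every eigenvalue of $\wp^{k-1}B_1$ has slope $\geq k-1>a$, so $\wp^{k-1}B_1$ contributes no eigenvalue of slope $a$, whence $d(M_0,a)=d(U^{(k)},a)=d(k,a)$ for every $a<k-1$.

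The remaining input is the perturbation estimate. By Corollary \ref{CorElDiv} the matrix $U^{(k+p^n)}$ satisfies $s_i\geq\lfloor(i-1)/d\rfloor$, and by the weight-independence above its slope-zero multiplicity equals $\vep$. Applying Proposition \ref{PropPerturb} with $d_0=d$, $\vep_0=\vep$, $B=U^{(k+p^n)}$ and $B'=M_0$ (legitimate since $B'-B\in\wp^{p^n}M_L(\cO_E)$) yields $d(U^{(k+p^n)},a)=d(M_0,a)$ for all $a<C(n,d,\vep)$. Combining with the previous paragraph gives $d(k+p^n,a)=d(k,a)$ whenever $a<\min\{C(n,d,\vep),k-1\}$, and iterating over the weights listed above completes the proof.

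The genuinely substantial ingredient is the perturbation comparison, but that labor is already done inside Proposition \ref{PropPerturb} (resting on the Kedlaya estimate \cite[Theorem 4.4.2]{Ked}). At the level of this theorem the only points requiring care are the block-triangular splitting that isolates the large slopes $\geq k-1$ contributed by the new weight directions, and the identification of the slope-zero multiplicity $\vep$ across weights via Proposition \ref{PropOrdAgree}; both are routine once the windowing of Corollary \ref{CorWindow} and the elementary-divisor bound of Corollary \ref{CorElDiv} are in hand.
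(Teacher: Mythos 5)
Your proposal is correct and follows essentially the same route as the paper: reduce to $k'=k+p^n$ via the weight-independence of $d(\cdot,0)$ from Proposition \ref{PropOrdAgree}, compare $U^{(k+p^n)}$ with the block upper-triangular matrix from Corollary \ref{CorWindow} (your $M_0$ is the paper's $V$) via Proposition \ref{PropPerturb} and Corollary \ref{CorElDiv}, and discard the upper-left block since its eigenvalues have slope at least $k-1>a$. The only cosmetic difference is that the paper cites \cite[Lemma 2.3 (2)]{Ha_GVt} for this last point where you argue it directly from the block-triangular factorization of $\det(I-M_0X)$.
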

\begin{proof}
	By Proposition \ref{PropOrdAgree} (\ref{PropOrdAgree-1}), we may assume $k'=k+p^n$.
	By Corollary \ref{CorWindow}, we can write $U^{(k+p^n)}+\wp^{p^n}W=V$ with $W\in M_{d(k+p^n-1)}(\cO_{K_\wp})$ and
	\[
	V=\left(\begin{array}{c|c}
	\wp^{k-1}B_1 & B_2\\\hline
	O & U^{(k)}
	\end{array}\right),\quad B_1\in M_{dp^n}(\cO_{K_\wp}),\ B_2\in M_{dp^n,d(k-1)}(\cO_{K_\wp}).
	\]
	Corollary \ref{CorElDiv} and Proposition \ref{PropOrdAgree} (\ref{PropOrdAgree-1}) show that $U^{(k+p^n)}$ satisfies the assumptions of Proposition 
	\ref{PropPerturb}. Hence we obtain 
	$d(k+p^n,a)=d(V,a)$.
	By \cite[Lemma 2.3 (2)]{Ha_GVt}, the matrix $\wp^{k-1}B_1$ has no eigenvalue of slope less than $k-1$. 
	Since $a<k-1$, we also have $d(V,a)=d(k,a)$. This concludes the proof.
\end{proof}

\begin{thm}\label{ThmDMFP0}
	Suppose that $\frn\wp$ has a prime factor $\pi$ of degree one. Let $n\geq 1$ and $k\geq 2$ be any integers. 
	Let $\chi:\kappa(\wp)^\times\to \kappa(\wp)^\times$ be any character.
	Put $d=[\Gamma_1(\pi):\Gamma_1^\Theta(\frn,\wp^r)]$ and $\vep_\chi=d(k,\chi,0)$. 
	Let $a$ be any non-negative rational number satisfying 
	\[
	a<\min\{C(n,d,\vep_\chi),k-1\}.
	\]
	Then, for any integer $k'\geq k$, we have
	\[
	k'\equiv k \bmod p^n(q^d-1) \Rightarrow d(k',\chi,a)=d(k,\chi,a).
	\]
\end{thm}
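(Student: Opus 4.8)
The plan is to transcribe the proof of Theorem~\ref{ThmDMFP} into the $\chi$-eigenspace setting, replacing each ingredient by its $\chi$-part counterpart established in \S\ref{SubsecUop}. First I would reduce to the single step $k'=k+p^n(q^d-1)$. An arbitrary admissible weight can be written as $k'=k+Np^n(q^d-1)$ with $N\geq 0$, and since every intermediate weight $k+jp^n(q^d-1)$ is congruent to $k$ modulo $q^d-1$, Proposition~\ref{PropOrdAgree}~(\ref{PropOrdAgree-chi}) gives $d(\Gamma_1^\Theta(\frn,\wp^r),k+jp^n(q^d-1),\chi,0)=\vep_\chi$ throughout. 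Hence the perturbation radius $C(n,d,\vep_\chi)$ is the same at every stage, so it suffices to treat one step and chain the resulting equalities.

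For the single step I would invoke Proposition~\ref{PropWindow0}~(\ref{PropWindow0-Window}) to obtain bases of $\cV_k(\cO_E)(\chi)$ and $\cV_{k+p^n(q^d-1)}(\cO_E)(\chi)$ with respect to which
\[
U^{(k+p^n(q^d-1))}_\chi=\left(\begin{array}{c|c}\wp^{k-1}B_1 & B_2\\\hline \wp^{p^n}B_3 & U^{(k)}_\chi+\wp^{p^n}B_4\end{array}\right),
\]
and I would set
\[
V_\chi=\left(\begin{array}{c|c}\wp^{k-1}B_1 & B_2\\\hline O & U^{(k)}_\chi\end{array}\right).
\]
Then $U^{(k+p^n(q^d-1))}_\chi-V_\chi\in\wp^{p^n}M_L(\cO_E)$, with $L$ the common matrix size, so the two matrices are a $\wp^{p^n}$-perturbation of one another.

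Next I would verify that $U^{(k+p^n(q^d-1))}_\chi$ meets the hypotheses of Proposition~\ref{PropPerturb} with $d_0=d$ and $\vep_0=\vep_\chi$. The elementary-divisor bound $s_{\chi,i}\geq\lfloor\frac{i-1}{d}\rfloor$ is Proposition~\ref{PropWindow0}~(\ref{PropWindow0-ElDiv}) in weight $k+p^n(q^d-1)$, and $d(U^{(k+p^n(q^d-1))}_\chi,0)=\vep_\chi$ by the reduction above. Since $a<C(n,d,\vep_\chi)$, Proposition~\ref{PropPerturb} yields $d(k+p^n(q^d-1),\chi,a)=d(V_\chi,a)$. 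Finally, $V_\chi$ is block upper-triangular, so its characteristic polynomial is the product of those of the two diagonal blocks; by \cite[Lemma~2.3~(2)]{Ha_GVt} the block $\wp^{k-1}B_1$ contributes only eigenvalues of slope $\geq k-1$, and since $a<k-1$ these do not enter the slope-$a$ count. Hence $d(V_\chi,a)=d(U^{(k)}_\chi,a)=d(k,\chi,a)$, which settles the single step and therefore the theorem.

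The part that requires genuine care, rather than a real obstacle, is the bookkeeping of the first paragraph: one must ensure that the perturbation radius does not drift as the weight is incremented. This is exactly why the weaker congruence $k'\equiv k\bmod p^n(q^d-1)$ together with Proposition~\ref{PropOrdAgree}~(\ref{PropOrdAgree-chi}) is used in place of the full weight-independence available in the non-$\chi$ case; the factor $q^d-1$ is forced both by the constancy of $d(\cdot,\chi,0)$ and by the window estimate of Proposition~\ref{PropWindow0}~(\ref{PropWindow0-Window}). Everything else is a faithful copy of the argument for Theorem~\ref{ThmDMFP}.
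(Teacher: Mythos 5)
Your proposal is correct and follows exactly the route the paper takes: the paper's own proof of this theorem is the one-line remark that it "follows in the same way as Theorem \ref{ThmDMFP}, using Proposition \ref{PropWindow0} and Proposition \ref{PropOrdAgree} (\ref{PropOrdAgree-chi})," and your write-up is a faithful expansion of that, with the reduction to a single step via the constancy of $d(\cdot,\chi,0)$ modulo $q^d-1$, the block decomposition from Proposition \ref{PropWindow0} (\ref{PropWindow0-Window}), the elementary-divisor bound from Proposition \ref{PropWindow0} (\ref{PropWindow0-ElDiv}), and the final appeal to Proposition \ref{PropPerturb} and \cite[Lemma 2.3 (2)]{Ha_GVt} all placed correctly.
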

\begin{proof}
	This follows in the same way as Theorem \ref{ThmDMFP}, using Proposition \ref{PropWindow0} and Proposition \ref{PropOrdAgree} (\ref{PropOrdAgree-chi}).
\end{proof}

\begin{thm}\label{ThmGM}
	Suppose that $\frn\wp$ has a prime factor $\pi$ of degree one. Let $n\geq 1$ and $k\geq 2$ be any integers and $a\leq n$ any non-negative rational number. 
	Put $d=[\Gamma_1(\pi):\Gamma_1^\Theta(\frn,\wp^r)]$ and $\vep=d(k,0)$. Suppose that the following conditions hold. 
	\begin{enumerate}
		\item\label{ThmGM-large} If $p=2$, then $n\geq 3$ or $d-\vep\leq 1$.
		\item\label{ThmGM-small} $2p^n> n(dn+2+d-2\vep)$.
	\end{enumerate}
	Then, for any integer $k'\geq k$, we have
	\[
	a<k-1,\ k'\equiv k \bmod p^n\Rightarrow d(k',a)=d(k,a).
	\]
	\end{thm}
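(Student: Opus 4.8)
The plan is to follow the proof of Theorem~\ref{ThmDMFP} verbatim, replacing its perturbation input Proposition~\ref{PropPerturb} by Proposition~\ref{PropPerturbGM}. The crucial observation is that the numerical hypotheses (\ref{ThmGM-large}) and (\ref{ThmGM-small}) here are exactly conditions (\ref{PropPerturbGM-large}) and (\ref{PropPerturbGM-small}) of Proposition~\ref{PropPerturbGM} with $d_0=d$ and $\vep_0=\vep$; hence that proposition will apply with these parameters for the whole range $a\leq n$.

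First I would reduce to the case $k'=k+p^n$. By Proposition~\ref{PropOrdAgree}(\ref{PropOrdAgree-1}) the integer $\vep=d(k,0)$ is independent of the weight, and the conditions (\ref{ThmGM-large}), (\ref{ThmGM-small}) as well as the inequality $a<k-1$ persist when $k$ is replaced by $k+jp^n$ for $j\geq 0$; so any $k'\geq k$ with $k'\equiv k\bmod p^n$ is reached by iterating the one-step comparison $d(k,a)=d(k+p^n,a)$, and it suffices to treat $k'=k+p^n$.

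Next, by Corollary~\ref{CorWindow} I would write $U^{(k+p^n)}+\wp^{p^n}W=V$, where
\[
V=\left(\begin{array}{c|c}\wp^{k-1}B_1 & B_2\\\hline O & U^{(k)}\end{array}\right)
\]
is obtained by absorbing the lower-left block into $\wp^{p^n}W$. Put $B=U^{(k+p^n)}$. By Corollary~\ref{CorElDiv} applied in weight $k+p^n$, the elementary divisors of $B$ satisfy $s_i\geq\lfloor\frac{i-1}{d}\rfloor$, and by Proposition~\ref{PropOrdAgree}(\ref{PropOrdAgree-1}) we have $d(B,0)=d(k+p^n,0)=\vep$. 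Thus $B$ meets the hypotheses of Proposition~\ref{PropPerturbGM} with $d_0=d$, $\vep_0=\vep$, and $B'=V$ satisfies $B'-B\in\wp^{p^n}M_{d(k+p^n-1)}(\cO_{K_\wp})$. Since $a\leq n$, Proposition~\ref{PropPerturbGM} then yields $d(k+p^n,a)=d(V,a)$.

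Finally, as $V$ is block upper triangular, its eigenvalues are those of $\wp^{k-1}B_1$ together with those of $U^{(k)}$. By \cite[Lemma 2.3 (2)]{Ha_GVt} the block $\wp^{k-1}B_1$ has no eigenvalue of slope below $k-1$, so with $a<k-1$ only $U^{(k)}$ can contribute eigenvalues of slope $a$; hence $d(V,a)=d(k,a)$ and therefore $d(k+p^n,a)=d(k,a)$. There is no serious obstacle here: essentially all the work has been done in Proposition~\ref{PropPerturbGM} and the structural results Corollary~\ref{CorWindow} and Corollary~\ref{CorElDiv}. The only point demanding care is verifying that the hypotheses of the theorem transcribe exactly into those of Proposition~\ref{PropPerturbGM}, and that the latter remains valid at the endpoint $a=n$ of the slope range, so that no loss occurs there.
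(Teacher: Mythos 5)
Your proposal is correct and follows exactly the paper's route: the paper proves this theorem by repeating the argument of Theorem~\ref{ThmDMFP} with Proposition~\ref{PropPerturbGM} substituted for Proposition~\ref{PropPerturb}, which is precisely what you do. Your verification that conditions (\ref{ThmGM-large}) and (\ref{ThmGM-small}) transcribe into the hypotheses of Proposition~\ref{PropPerturbGM} with $d_0=d$, $\vep_0=\vep$, and that the endpoint $a=n$ is covered, matches the intended argument.
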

\begin{proof}
	This follows in the same way as Theorem \ref{ThmDMFP}, using Proposition \ref{PropPerturbGM} instead of Proposition \ref{PropPerturb}.
	\end{proof}


It will be necessary to use an increasing function no more than $C(n,d,\vep)$ instead of itself. Here we give an example.

\begin{lem}\label{LemD}
	Let $n,d\geq 1$ and $\vep\geq 0$ be any integers satisfying $\vep\leq d$. Put 
	\begin{align*}
	D_2(n,d,\vep)&=\frac{1}{d}\left\{\sqrt{2dp^n+(d-\vep+1)(2d-\vep-1)}-\frac{3}{2}d+\vep\right\},\\
	D(n,d,\vep)&=\min\{C_1(n,d,\vep), D_2(n,d,\vep)\}.
	\end{align*}
	Then $D(n,d,\vep)$ is an increasing function of $n$ satisfying $D(n,d,\vep)\leq C(n,d,\vep)$.
	\end{lem}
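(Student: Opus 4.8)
The plan is to treat the two assertions separately. Monotonicity in $n$ is the easy part. Viewing $C_1(n,d,\vep)$ as a function of $x=p^n$, a direct differentiation shows it is strictly increasing for $x\ge 2$: the numerator of its derivative works out to $2d^2x^2+4dx(2-\vep)+(4-d)(4-2\vep)$, which is decreasing in $\vep$ and hence minimized at $\vep=d$, where it equals $2d^2(x-1)^2+4d(2x-3)+16>0$. Meanwhile $D_2(n,d,\vep)$ is manifestly increasing in $n$, since its radicand $2dp^n+(d-\vep+1)(2d-\vep-1)$ is linear increasing in $p^n$. As $p^n$ increases with $n$, both $C_1$ and $D_2$ increase with $n$, and a minimum of increasing functions is increasing, so $D(n,d,\vep)$ increases with $n$.

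For the inequality $D\le C$ it suffices to prove $D_2\le C_2$, for this already forces $\min\{C_1,D_2\}\le\min\{C_1,C_2\}$. Writing $R=p^n+\tfrac12 dq_l(q_l-1)+q_l(r_l+1)$, the defining formula for $C_2$ reads $C_2(n,d,\vep)=\min_l \tfrac{R}{l-\vep}$, the minimum ranging over integers $l$ with $\vep<l\le 1+dp^n$, and I would establish $\tfrac{R}{l-\vep}\ge D_2$ for each such $l$.

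The key step is to decouple the residue $r_l$ from $q_l$ by a one-sided estimate in the numerator and in the denominator \emph{separately}. Since $r_l\ge 0$ one may replace $r_l+1$ by $1$ in $R$, and since $r_l\le d-1$ one may replace $r_l$ by $d-1$ in $l-\vep=dq_l+r_l+2-\vep$; as the resulting numerator stays positive, both replacements decrease the quotient, giving
\[
\frac{R}{l-\vep}\ \ge\ \tilde f(q_l),\qquad \tilde f(q)=\frac{p^n+\tfrac12 dq(q-1)+q}{dq+d+1-\vep}.
\]
The point of the construction is that $D_2$ is exactly the minimum of $\tilde f$. Minimizing this quadratic-over-linear function over the half-line on which its denominator is positive — a set containing every integer $q\ge 0$ — the standard computation produces the critical value $\tfrac1d\bigl(\sqrt{2dp^n+(d-\vep+1)(2d-\vep-1)}-\tfrac32 d+\vep\bigr)=D_2$, with the radicand $2dp^n+(d-\vep+1)(2d-\vep-1)$ arising precisely as the discriminant-type term. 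Since the numerator of $\tilde f$ remains positive at the pole $q_0=-(d+1-\vep)/d<0$ (its value there is $p^n+\tfrac{1}{2d}(d+1-\vep)(2d-1-\vep)>0$), this critical point is the global minimum on the relevant branch, so $\tilde f(q)\ge D_2$ for all integers $q\ge0$ and hence $\tfrac{R}{l-\vep}\ge D_2$ for every valid $l$. This covers the cases $D_2\ge 0$ and $D_2<0$ uniformly, with no case distinction.

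I expect the main obstacle to be locating the intermediate bound $\tilde f$, not verifying it. The naive route of relaxing $l$ to a real variable and minimizing $\tfrac{R}{l-\vep}$ directly is too lossy: the continuous infimum can fall strictly below $D_2$ (this already occurs when $d$ is large relative to $p^n$), because it ignores both the integrality of $q_l$ and the constraint $q_l\ge 0$. The asymmetric treatment of $r_l$ is what repackages the genuinely two-parameter discrete minimization as a one-parameter continuous one whose exact optimum is $D_2$; once this is in hand, what remains is the routine minimization of a rational function together with the elementary positivity check at the pole.
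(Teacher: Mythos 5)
Your proof is correct and follows essentially the same route as the paper: reduce to $D_2\leq C_2$, bound the quotient below by the substitutions $r_l+1\mapsto 1$ in the numerator and $r_l\mapsto d-1$ in the denominator, and then minimize the resulting one-variable rational function of $q_l$ (your pole/critical-point computation is the same as the paper's change of variable $x=dq_l+d-\vep+1$ followed by AM--GM). The only difference is cosmetic: you verify the monotonicity of $C_1$ explicitly, which the paper merely asserts.
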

\begin{proof}
	Since $C_1(n,d,\vep)$ is increasing for $n\geq 1$, it suffices to show $D_2(n,d,\vep)\leq C_2(n,d,\vep)$.
	Put $m=d-\vep+1$ and $x=dq_l+m\geq 1$. Since $r_l\in [0,d-1]$, for any $l>\vep$ we have
	\[
	\frac{2p^n+dq_l(q_l-1)+2q_l(r_l+1)}{2(l-\vep)}\geq\frac{2p^n+dq_l(q_l-1)+2q_l}{2x}.
	\]
	The right-hand side equals
	\begin{align*}
	&\frac{1}{2x}\left\{2p^n+d\left(\frac{x-m}{d}\right)\left(\frac{x-m}{d}-1\right)+2\left(\frac{x-m}{d}\right)\right\}\\
	&=\frac{x}{2d}+\frac{1}{2dx}\left(2dp^n+m(m+d-2)\right)-\frac{m}{d}-\frac{1}{2}+\frac{1}{d}.
	\end{align*}
	By the inequality of arithmetic and geometric means, it is no less than $D_2(n,d,\vep)$ and the lemma follows.
	\end{proof}

When $\frn=1$, $\wp=t$ and $r=1$, we have $\Gamma_1^\Theta(\frn,\wp^r)=\Gamma_1(t)$, $d=1$ and $\vep=1$ by \cite[Lemma 2.4]{Ha_GVt}, which yields
\[
C_1(n,1,1)=p^n\left(\frac{p^n+3}{2p^n+2}\right)\geq D_2(n,1,1)=\sqrt{2p^n}-\frac{1}{2}.
\]
Thus we obtain
\begin{equation}\label{EqnD11}
D(n,1,1)=\sqrt{2p^n}-\frac{1}{2}>0
\end{equation}
and Theorem \ref{ThmDMFP} gives the following improvement of \cite[Theorem 1.1]{Ha_GVt}.

\begin{cor}\label{CorGVt}
	Suppose $\frn=1$, $\wp=t$ and $r=1$. Let $k\geq 2$ be any integer and $a$ any non-negative rational number. 
	Let $n\geq 1$ be any integer satisfying 
	\[
	\frac{1}{2}\left(a+\frac{1}{2}\right)^2<p^n.
	\]
	Then, for any integer $k'\geq k$, we have
	\[
	a<k-1,\ k'\equiv k \bmod p^n\Rightarrow d(\Gamma_1(t),k',a)=d(\Gamma_1(t),k,a).
	\]
	\end{cor}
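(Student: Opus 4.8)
The plan is to obtain this as a direct specialization of Theorem \ref{ThmDMFP}, the only work being to translate the hypothesis on $n$ into the inequality $a<C(n,d,\vep)$ demanded there.

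First I would record the parameters in this case. When $\frn=1$, $\wp=t$ and $r=1$ the level is $\Gamma_1^\Theta(\frn,\wp^r)=\Gamma_1(t)$, and $\frn\wp=t$ has the degree-one prime factor $\pi=t$, so the standing hypothesis of Theorem \ref{ThmDMFP} holds with $d=[\Gamma_1(t):\Gamma_1(t)]=1$. By \cite[Lemma 2.4]{Ha_GVt} the slope-zero dimension is $\vep=d(k,0)=1$, which is moreover independent of $k$ by Proposition \ref{PropOrdAgree} (\ref{PropOrdAgree-1}).

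Next I would pin down the bound. With $d=\vep=1$, the computation displayed just before the corollary gives $D(n,1,1)=\sqrt{2p^n}-\tfrac12$, see (\ref{EqnD11}), while Lemma \ref{LemD} supplies $D(n,1,1)\leq C(n,1,1)$. It then suffices to check that the assumption $\tfrac12(a+\tfrac12)^2<p^n$ forces $a<D(n,1,1)$. Indeed, this assumption is equivalent to $(a+\tfrac12)^2<2p^n$; since $a\geq 0$, both sides are non-negative, so taking positive square roots gives $a+\tfrac12<\sqrt{2p^n}$, that is $a<\sqrt{2p^n}-\tfrac12=D(n,1,1)\leq C(n,1,1)$.

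Finally, combining $a<C(n,1,1)$ with the hypothesis $a<k-1$ yields $a<\min\{C(n,1,1),k-1\}$, which is precisely the condition required in Theorem \ref{ThmDMFP}. Applying that theorem with $d=\vep=1$ gives $d(\Gamma_1(t),k',a)=d(\Gamma_1(t),k,a)$ for every integer $k'\geq k$ with $k'\equiv k\bmod p^n$, which is the assertion. I do not expect any genuine obstacle here; the one point requiring care is that Theorem \ref{ThmDMFP} is phrased in terms of the implicit bound $C(n,d,\vep)$, so Lemma \ref{LemD} is needed in order to replace it by the explicit increasing function $D(n,1,1)=\sqrt{2p^n}-\tfrac12$ that makes the quadratic reformulation of the hypothesis transparent.
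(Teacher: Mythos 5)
Your proposal is correct and follows exactly the route the paper intends: specialize Theorem \ref{ThmDMFP} with $d=\vep=1$, use the displayed computation (\ref{EqnD11}) giving $D(n,1,1)=\sqrt{2p^n}-\tfrac12$ together with Lemma \ref{LemD}, and observe that the hypothesis $\tfrac12(a+\tfrac12)^2<p^n$ is equivalent to $a<\sqrt{2p^n}-\tfrac12\leq C(n,1,1)$. No issues.
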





\section{$\wp$-adic continuous family}\label{SecBF}

We say $F\in \cV_k(\bC_\wp)$ is a Hecke eigenform if it is a non-zero eigenvector of $T_Q$ for any $Q\in A$. We denote by $\lambda_Q(F)$ the $T_Q$-eigenvalue of 
$F$.
Since Hecke operators commute with each other, if $d(k,a)=1$ (\textit{resp.} $d(k,\chi,a)=1$) then any non-zero $U$-eigenform in $\cV_k(\bC_\wp)$ 
(\textit{resp.} $\cV_k(\bC_\wp)(\chi)$) of slope $a$ 
is a Hecke eigenform.


\subsection{Construction of the family}\label{SubsecBF} Now we prove the following main theorem of this paper.

\begin{thm}\label{ThmBF}
	Suppose that $\frn\wp$ has a prime factor $\pi$ of degree one. Let $n\geq 1$ and $k_1\geq 2$ be any integers. 
	Put $d=[\Gamma_1(\pi):\Gamma_1^\Theta(\frn,\wp^r)]$ and $\vep=d(k_1,0)$. 
	Let $a$ be any non-negative rational number satisfying 
	\[
	a<\min\{C(n,d,\vep),k_1-1\}.
	\]
	Let $n'\geq 1$ be any integer satisfying 
	\[
	p^n-p^{n'}-a\geq 0,\quad a<C(n',d,\vep).
	\]
	
	Suppose $d(k_1,a)=1$. Let $F_1\in \cV_{k_1}(\bC_\wp)$ be a Hecke eigenform of slope $a$. Then, for any integer $k_2\geq 
	k_1$ satisfying
	\[
	k_2\equiv k_1\bmod p^n,
	\] 
	we have $d(k_2,a)=1$ and thus there exists a Hecke eigenform $F_2\in \cV_{k_2}(\bC_\wp)$ of 
	slope $a$ which is unique up to a scalar multiple. Moreover, for any $Q$ we have
	\begin{equation}\label{EqnThmBF}
	v_\wp(\lambda_Q(F_1)-\lambda_Q(F_2))> p^n-p^{n'}-a.
	\end{equation}
	\end{thm}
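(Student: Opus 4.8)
The plan is to first establish the dimension equality $d(k_2,a)=1$ and then bound the Hecke eigenvalue discrepancy by a careful comparison between $F_1$, the eigenform $F_2$ in weight $k_2$, and a weight-reduced cuspform built from $F_2$. The dimension equality $d(k_2,a)=1$ is immediate from Theorem \ref{ThmDMFP} together with the hypothesis $a<\min\{C(n,d,\vep),k_1-1\}$ and $k_2\equiv k_1\bmod p^n$; since $d(k_2,a)=1$ and Hecke operators commute, the slope-$a$ eigenform $F_2\in\cV_{k_2}(\bC_\wp)$ is a genuine Hecke eigenform, unique up to scalar.

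The core of the argument is to produce, from $F_2$, an auxiliary cuspform $G$ of weight $k_1$ whose mod-$\wp^{p^{n'}}$ reduction is a Hecke eigenform sharing the eigenvalues of $F_2\bmod\wp^{p^{n'}}$. Writing $k_2=k_1+p^{n'}\cdot(k_2-k_1)/p^{n'}$ and applying the weight reduction map $1\otimes\rho_{k_1,p^{n'}m'}$ from \S\ref{SubsecWtRed}, which by Lemma \ref{LemWtRed} commutes with the Hecke operators modulo $\wp^{p^{n'}}$, I would set $G$ to be (a lift of) the image of $F_2$. The upshot is that $G\bmod\wp^{p^{n'}}$ satisfies $T_Q(G)\equiv\lambda_Q(F_2)G\pmod{\wp^{p^{n'}}}$ for every $Q$, so its reduction is an eigenform with the same eigenvalue system as $F_2$ modulo $\wp^{p^{n'}}$.

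The decisive step is then the dichotomy that drives Buzzard's method. I would consider the two lines $\bC_\wp F_1$ and $\bC_\wp G$ inside $\cV_{k_1}(\bC_\wp)$ and measure their $\wp$-adic distance. If they are sufficiently close, then because $F_1$ is a $T_Q$-eigenform with eigenvalue $\lambda_Q(F_1)$ and $G$ is an eigenform modulo $\wp^{p^{n'}}$ with eigenvalue $\lambda_Q(F_2)$, comparing $T_Q$ acting on the two nearly-equal lines forces
\[
v_\wp(\lambda_Q(F_1)-\lambda_Q(F_2))>p^n-p^{n'}-a,
\]
which is exactly (\ref{EqnThmBF}). If instead the two lines are far apart, then $F_1$ and $G$ span a two-dimensional space on which $U$ acts with slope $a$ (using that $G$ is a slope-$a$ eigenform modulo a high power of $\wp$ together with the elementary divisor bound of Corollary \ref{CorElDiv}); invoking Theorem \ref{ThmDMFP} again, or rather the perturbation estimate of Proposition \ref{PropPerturb} applied with parameter $n'$ and the hypothesis $a<C(n',d,\vep)$, this would yield $d(k_1,a)\geq 2$, contradicting $d(k_1,a)=1$.

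The hard part will be making the phrase \emph{"sufficiently close"} precise and quantitative: one must track, through the weight reduction and the comparison of eigenvalue actions, exactly how the $\wp$-adic distance between the two lines controls $v_\wp(\lambda_Q(F_1)-\lambda_Q(F_2))$, and simultaneously show that the complementary \emph{"far apart"} regime genuinely forces a two-dimensional slope-$a$ generalized eigenspace in weight $k_1$. The two thresholds must be calibrated so that the close case delivers the bound $p^n-p^{n'}-a$ while the far case triggers the multiplicity contradiction; this is where the constraints $p^n-p^{n'}-a\geq 0$ and $a<C(n',d,\vep)$ enter, and balancing them against the window structure of $U^{(k_2)}$ from Corollary \ref{CorWindow} is the crux of the proof.
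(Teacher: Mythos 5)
Your overall architecture is the right one and matches the paper's: establish $d(k_2,a)=1$ via Theorem \ref{ThmDMFP}, weight-reduce $F_2$ to a form $G$ in weight $k_1$ that is a Hecke eigenform modulo a power of $\wp$, and then run Buzzard's dichotomy (lines close $\Rightarrow$ eigenvalue congruence; lines far $\Rightarrow$ multiplicity $\geq 2$ of slope $a$ via Proposition \ref{PropPerturb}, contradicting $d(k_1,a)=1$). However, there is a genuine error in how you set up the weight reduction: you apply $1\otimes\rho_{k_1,p^{n'}m'}$ and obtain only $T_Q(G)\equiv \lambda_Q(F_2)G \bmod \wp^{p^{n'}}$. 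That precision is too coarse. In the ``close'' branch the congruence $\alpha(\lambda_Q(F_1)-\lambda_Q(F_2))\equiv 0$ can then only be read modulo $(\beta,\wp^{p^{n'}})$, so after dividing by $\alpha$ (whose valuation you can only bound by $a$) the best you can extract is $v_\wp(\lambda_Q(F_1)-\lambda_Q(F_2))\geq p^{n'}-a$, which is strictly weaker than the required $p^n-p^{n'}-a$ whenever $p^n>2p^{n'}$. The paper instead exploits the full hypothesis $k_2\equiv k_1\bmod p^n$ and reduces via $1\otimes\rho_{k_1,p^n}$ over $\cO_{E,p^n}$, getting $T_Q(G_1)\equiv\lambda_Q(F_2)G_1\bmod\wp^{p^n}$. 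The integer $n'$ plays no role in the weight reduction at all: it only calibrates the dichotomy threshold (writing $G_1=\alpha F_1+\beta H_1$, the split is $v_\wp(\beta)>p^n-p^{n'}$ versus $v_\wp(\beta)\leq p^n-p^{n'}$) and feeds the hypothesis $a<C(n',d,\vep)$ into Proposition \ref{PropPerturb} in the far branch, where the perturbation $\beta^{-1}\wp^{p^n}$ has valuation at least $p^{n'}$. You have conflated these two roles.

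Two further points you leave unresolved but which are essential to close the close-case bound: you must show $v_\wp(G_1)\leq a$ (the paper does this by splitting $F_2$ into blocks $(x,y)$ adapted to Corollary \ref{CorWindow} and using $v_\wp(\lambda_\wp(F_2))=a<k_1-1$ to force $v_\wp(y)\leq a$), since otherwise you cannot bound $v_\wp(\alpha)$ and the loss in the eigenvalue estimate is uncontrolled; and in the far branch the two-dimensionality is not of an actual eigenspace but of $d(V,a)$ for an auxiliary matrix $V$ built by truncating the second column of the representing matrix of $U$ in the basis $(F_1,H_1,\dots)$, after dividing the approximate eigenvector relation by $\beta$. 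These are exactly the quantitative steps you flag as ``the hard part,'' so the proposal is best described as having the correct skeleton with one substantive misstep (the reduction modulus) and the decisive estimates still to be supplied.
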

\begin{proof}
	By Proposition \ref{PropOrdAgree} (\ref{PropOrdAgree-1}), we may assume $(k_1,k_2)=(k,k+p^n)$ for some integer $k\geq 2$. Theorem \ref{ThmDMFP} yields $d(k
	+p^n,a)=1$ and any non-zero $U
	$-eigenform 
	$F_2\in \cV_{k_2}(\bC_\wp)$ of slope $a$ is a Hecke eigenform. 
	Take a finite extension $E/K_\wp$ inside $\bC_\wp$ 
containing $\lambda_Q(F_i)$ and $\lambda_\wp(F_i)$ for 
	$i=1,2$. We may assume $F_i\in \cV_{k_i}(\cO_E)$. We identify $\cV_{k_i}(\cO_E)$ with $\cO_E^{d(k_i-1)}$ via the ordered basis $
	\frB_{k_i}$. Then we can write
	\[
	F_2=\begin{pmatrix}
	x\\y
	\end{pmatrix}, \quad x\in\cO_E^{dp^n},\ y\in \cO_E^{d(k-1)},
	\]
	where each entry of $x$ is the coefficient of $v_{s,t}\in\frB_{k_2}$ in $F_2$ with $t<p^n$. 
	For any integer $N$ and $z={}^t\!(z_1,\ldots,z_N)\in \cO_E^N$, we put 
	\[
	v_\wp(z)=\min\{v_\wp(z_i)\mid i=1,\ldots,N\}. 
	\]
	Replacing $F_i$ by its scalar multiple, we may assume $v_\wp(F_i)=0$.
	
	For any $H\in \cV_{k_i}(\cO_E)$, we denote by $\bar{H}$ its image by the natural map $\cV_{k_i}(\cO_E)\to \cV_{k_i}(\cO_{E,p^n})$. 
	Consider the weight reduction map
	\[
	1\otimes \rho_{k,p^n}:\cV_{k+p^n}(\cO_{E,p^n})\to \cV_{k}(\cO_{E,p^n})
	\] 
	as in \S\ref{SubsecWtRed}, which we denote by $\rho$. Then $\rho(\bar{F}_2)=y \bmod \wp^{p^n}$. 
	
	We claim $v_\wp(y)\leq a$. Indeed, if $v_\wp(x)\geq v_\wp(y)$, then the assumption $v_\wp(F_2)=0$ yields $v_\wp(y)=0$. 
	If $v_\wp(x)< v_\wp(y)$, then $v_\wp(x)=0$ and Corollary \ref{CorWindow} gives
	\[
	\lambda_\wp(F_2) x=\wp^{k-1}B_1x+B_2y.
	\]
	Since $v_\wp(\lambda_\wp(F_2))=a<k-1$, this forces $v_\wp(y)\leq a$ and the claim follows.
	
	Take $G_1\in \cV_k(\cO_E)$ satisfying $\bar{G}_1=\rho(\bar{F}_2)$. By Lemma \ref{LemWtRed}, we have
	\begin{equation}\label{EqnRhoTQ}
	T_Q(G_1)\equiv \lambda_Q(F_2)G_1,\quad U(G_1)\equiv \lambda_\wp(F_2)G_1\bmod \wp^{p^n}\cV_k(\cO_E).
	\end{equation}
	Since we have $a<C(n,d,\vep)<p^n$, the above claim yields $v_\wp(G_1)\leq a$. If $G_1\in \cO_E F_1$, then $G_1$ is a 
	Hecke eigenform with the same eigenvalues as those of $F_1$. Thus we have
	\[
	\lambda_Q(F_1)\bar{G}_1=T_Q(\bar{G}_1)=\lambda_Q(F_2)\bar{G}_1,
	\]
	which gives 
	\begin{equation}\label{EqnCaseOne}
	v_\wp(\lambda_Q(F_1)-\lambda_Q(F_2)) \geq p^n-a.
	\end{equation}
	
	Suppose $G_1\notin \cO_E F_1$, and take $H_1\in \cV_k(\cO_E)$ such that $F_1$ and $H_1$ form a basis of a direct summand of $\cV_k(\cO_E)$ containing $G_1$. 
	Write 
	\begin{equation}\label{EqnGFHdef}
	G_1=\alpha F_1 +\beta H_1,\quad \alpha,\beta\in \cO_E. 
	\end{equation}
	Then $\beta\neq 0$. By (\ref{EqnRhoTQ}), for any $R\in \{\wp,Q\}$ we have
	\[
	\lambda_R(F_2) G_1\equiv T_R(G_1)=\alpha\lambda_R(F_1)F_1+\beta T_R(H_1) \bmod \wp^{p^n}\cV_k(\cO_E).
	\]
	Combined with (\ref{EqnGFHdef}), this implies
	\begin{equation}\label{EqnUH}
	\beta T_R(H_1)\equiv \alpha(\lambda_R(F_2)-\lambda_R(F_1)) F_1+\beta\lambda_R(F_2) H_1\bmod \wp^{p^n}\cV_k(\cO_E)
	\end{equation}
and thus we obtain
	\begin{equation}\label{EqnAQ}
	\alpha(\lambda_R(F_1)-\lambda_R(F_2))\equiv 0 \bmod (\beta, \wp^{p^n}).
	\end{equation}
	
	Put $b=v_\wp(\beta)$. Suppose $b> p^n-p^{n'}$. Since $v_\wp(F_1)=0$ and 
	\[
	v_\wp(G_1)\leq a\leq p^n-p^{n'}<b, 
	\]
	(\ref{EqnGFHdef}) gives $v_\wp(\alpha)\leq a$ and (\ref{EqnAQ}) yields
	\begin{equation}\label{EqnCaseTwo}
	v_\wp(\lambda_Q(F_1)-\lambda_Q(F_2))>  p^n-p^{n'}-a.
	\end{equation}
	
	Suppose $b\leq p^n-p^{n'}$. In this case we have $\beta^{-1}\wp^{p^n}\in\cO_E$ and by (\ref{EqnAQ}) we can write 
	\[
	\alpha(\lambda_\wp(F_2)-\lambda_\wp(F_1))=\beta\nu 
	\]
	with some 
	$\nu\in \cO_E$.
	Then (\ref{EqnUH}) shows
	\begin{equation}\label{EqnUFH}
	U(H_1)\equiv \nu F_1+\lambda_\wp(F_2) H_1\bmod \beta^{-1}\wp^{p^n}\cV_k(\cO_E).
	\end{equation}
	
	Take an ordered basis $(F_1,H_1,\tilde{v}_3,\ldots,\tilde{v}_{d(k-1)})$ of the $\cO_E$-module $\cV_k(\cO_E)$, and we denote by $\tilde{U}^{(k)}$
	the representing matrix of $U$ with respect to it. By (\ref{EqnUFH}), we can write
	\[
	\tilde{U}^{(k)}=\left(\begin{array}{cc|ccc}
	\lambda_\wp(F_1) & \nu  +\beta^{-1}\wp^{p^n} c_1& * &\cdots &*\\
	0 & \lambda_\wp(F_2) +\beta^{-1}\wp^{p^n} c_2 & * &\cdots &*\\
	0 & \beta^{-1}\wp^{p^n} c_3 & *&\cdots &*\\
	\vdots& \vdots & \vdots &\cdots &\vdots\\
	0 & \beta^{-1}\wp^{p^n} c_{d(k-1)} & *&\cdots &*
	\end{array}\right),\quad c_1,\ldots,c_{d(k-1)}\in \cO_E.
	\]
	Note that the elementary divisors of $\tilde{U}^{(k)}$ and 
	$U^{(k)}$ agree with each other. Let $V$ be the element of $M_{d(k-1)}(\cO_E)$ with the same columns as those of 
	$\tilde{U}^{(k)}$ except the second column which we require to be
	\[
	\begin{pmatrix}\nu \\ \lambda_\wp(F_2) \\ 0 \\\vdots \\0\end{pmatrix}.
	\]
	Then we have $d(V,a)\geq 2$. 
	On the other hand, since $p^n-b\geq p^{n'}$, 
	the assumption $a<C(n',d,\vep)$ and Proposition 
	\ref{PropPerturb}
	yield $d(V,a)=d(k,a)=1$, which is the contradiction. Thus the case $b\leq p^n-p^{n'}$ never occurs. Now the theorem follows from 
	(\ref{EqnCaseOne}) and (\ref{EqnCaseTwo}).
	\end{proof}

\begin{rmk}\label{RmkMain0}
	Putting $\vep=d(k_1,\chi,0)$ and assuming $d(k_1,\chi,a)=1$, the same proof using Proposition \ref{PropWindow0} and Theorem \ref{ThmDMFP0} shows that we can 
	construct, from a Hecke eigenform 
	$F_1\in \cV_{k_1}(\bC_\wp)
	(\chi)$ of slope $a$, a Hecke eigenform $F_2\in \cV_{k_2}(\bC_\wp)(\chi)$ of slope $a$ satisfying (\ref{EqnThmBF}) for any integer $k_2\geq 
	k_1$ with
	\[
	k_2\equiv k_1\bmod p^n(q^d-1).
	\] 
	\end{rmk}

\begin{proof}[Proof of Theorem \ref{ThmMainIntro}]
Suppose that $n$, $k$ and $a$ satisfy the assumptions of Theorem \ref{ThmMainIntro}. 
Take any $k'\geq k$ satisfying 
\[
m=v_p(k'-k)\geq \log_p(p^n+a).
\] 
Since $n\leq m$ and $D(n,d,\vep)$ is an increasing function of $n$ satisfying $D(n,d,\vep)\leq C(n,d,\vep)$, we have
\[
a<\min\{C(m,d,\vep),k-1\},\quad p^m-p^n-a\geq 0,\quad a<C(n,d,\vep).
\]

Note that, if $d(k,a)=1$, then any $U$-eigenform of slope $a$ in $\cV_k(\bC_\wp)$
is identified with a scalar multiple of that in $\cV_k(\Kbar)\subseteq S_k(\Gamma_1^\Theta(\frn,\wp^r))$ via the fixed embedding $\iota_\wp$.
Thus Theorem \ref{ThmBF} produces a Hecke eigenform $F_{k'}\in S_{k'}(\Gamma_1^\Theta(\frn,\wp^r))$
such that for any $Q$ we have
\[
v_\wp(\iota_\wp(\lambda_Q(F_{k'})-\lambda_Q(F_k)))> p^m-p^n-a.
\]
This concludes the proof of Theorem \ref{ThmMainIntro}.
\end{proof}



\subsection{Examples}\label{SubsecEx}

We assume $\frn=1$, $\wp=t$, $r=1$ and $\Gamma_1^\Theta(\frn,\wp^r)=\Gamma_1(t)$. In this case we have $d=1$ and $d(k,0)=1$ 
for any $k\geq 2$. 
In the following, we give examples of congruences between Hecke eigenvalues obtained by Theorem \ref{ThmMainIntro} for this case, using results of 
\cite{BV0,LM,Pet}. Note 
that the Hecke 
operator at $Q$ considered in \cite{BV0,Pet} is $QT_Q$ with our normalization.

\subsubsection{Slope zero forms}

By $d(k,0)=1$, any $U$-eigenform of slope zero in $S_k(\Gamma_1(t))$ is a member of a $t$-adic continuous family obtained by Theorem \ref{ThmMainIntro}.
Some of such eigenforms can be given by the theory of $A$-expansions \cite{Pet}. 

For any integer $k\geq 3$ satisfying $k \equiv 2\bmod q-1$, Petrov 
constructed an element $f_{k,1}\in S_k(\mathit{SL}_2(A))$ with $A$-expansion \cite[Theorem 1.3]{Pet}.
We know that $f_{k,1}$ is a Hecke eigenform whose Hecke eigenvalue at $Q$ is one for any $Q$;
this follows from a formula for the Hecke action \cite[p.~2252]{Pet} and $c_a=a^{k-n}$.

For such $k$, let $f^{(t)}_{k,1}\in S_k(\Gamma_1(t))$ be the $t$-stabilization of $f_{k,1}$ of finite slope, namely
\[
f^{(t)}_{k,1}(z)=f_{k,1}(z)-t^{k-1}f_{k,1}(tz).
\]
It is non-zero by \cite[Theorem 2.2]{Pet}. Moreover, we can show that $f^{(t)}_{k,1}$ is a Hecke eigenform which also satisfies $\lambda_Q(f^{(t)}_{k,1})=1$ for 
any $Q$.

\begin{prop}\label{PropOrdt}
	Let $k\geq 2$ be any integer and $F_k$ any non-zero element of $S_k(\Gamma_1(t))$ of slope zero. Then we have $\lambda_Q(F_k)=1$ for any $Q$.
	\end{prop}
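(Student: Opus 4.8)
The plan is to leverage the hypotheses specific to this case: since $\frn=1$, $\wp=t$ and $r=1$ give $d=1$ and $d(k,0)=1$ for every $k\geq 2$, the slope zero subspace is one-dimensional, so $F_k$ is automatically a Hecke eigenform and is the unique slope zero eigenform up to a scalar. I would connect $F_k$, via the $t$-adic continuous family of Theorem \ref{ThmMainIntro}, to eigenforms in much larger weights $k'$ for which an explicit description through $A$-expansions is available, and then pass to the $t$-adic limit. The key point is that all the finite-slope $t$-stabilizations $f^{(t)}_{k',1}$ of Petrov's forms share the single system of Hecke eigenvalues $\lambda_Q=1$, so matching $F_k$ against them will force $\lambda_Q(F_k)=1$.

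Concretely, fix $n=1$ and take $a=0$. By \eqref{EqnD11} we have $D(1,1,1)=\sqrt{2p}-\frac{1}{2}>0$, and since $k-1\geq 1$ the hypotheses of Theorem \ref{ThmMainIntro} are satisfied with $\vep=d(k,0)=1$. For each large integer $m\geq 1$ I would choose a weight $k'$ with
\[
k'\geq\max\{k,3\},\qquad p^{m}\mid k'-k,\qquad k'\equiv 2\bmod q-1;
\]
this is possible by the Chinese remainder theorem because $q-1$ is prime to $p$, hence to $p^{m}$. For such $k'$ the form $f_{k',1}\in S_{k'}(\mathit{SL}_2(A))$ exists, and its finite-slope $t$-stabilization $f^{(t)}_{k',1}\in S_{k'}(\Gamma_1(t))$ is a nonzero Hecke eigenform with $\lambda_Q(f^{(t)}_{k',1})=1$ for all $Q$, as recalled above. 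In particular its $U$-eigenvalue is $1$, so it has slope zero; since $d(k',0)=1$, the slope zero eigenform $F_{k'}$ produced by Theorem \ref{ThmMainIntro} is a scalar multiple of $f^{(t)}_{k',1}$, and therefore $\lambda_Q(F_{k'})=1$ for every $Q$.

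Finally I would feed these into the congruence estimate. Since $p^{m}\mid k'-k$ gives $v_p(k'-k)\geq m$, Theorem \ref{ThmMainIntro} with $n=1$ and $a=0$ yields, for every $Q$,
\[
v_\wp\bigl(\iota_\wp(\lambda_Q(F_{k'})-\lambda_Q(F_k))\bigr)>p^{v_p(k'-k)}-p\geq p^{m}-p.
\]
As $\lambda_Q(F_{k'})=1$, this reads $v_\wp(\iota_\wp(\lambda_Q(F_k)-1))>p^{m}-p$, and letting $m\to\infty$ forces $\iota_\wp(\lambda_Q(F_k))=1$. Since $\iota_\wp\colon\Kbar\to\bC_\wp$ is an embedding of fields and hence injective, this gives $\lambda_Q(F_k)=1$, as desired.

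I expect the main obstacle to be the identification step rather than the limit argument: one must confirm that $f^{(t)}_{k',1}$ is a genuinely nonzero slope zero eigenform, so that one-dimensionality $d(k',0)=1$ pins it down (up to scalar) as the family member $F_{k'}$ carrying the eigenvalue system $\lambda_Q=1$. This rests on the non-vanishing of the $t$-stabilization and on reading off its Hecke eigenvalues from Petrov's formula for the Hecke action \cite{Pet}; the rest is the routine combination of the perturbation-based congruence bound with a passage to the $t$-adic limit.
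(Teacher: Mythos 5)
Your proposal is correct and follows essentially the same route as the paper: apply Theorem \ref{ThmMainIntro} with $n=1$, $a=0$ to connect $F_k$ to slope zero eigenforms $F_{k'}$ in weights $k'\equiv 2\bmod q-1$, identify $F_{k'}$ with Petrov's stabilized form $f^{(t)}_{k',1}$ using $d(k',0)=1$, and let the congruence depth tend to infinity. The only cosmetic difference is that the paper picks the explicit weights $k'=k+(q+1-r)q^{s}$ instead of invoking the Chinese remainder theorem.
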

\begin{proof}
	Let $r\in \{0,1,\ldots,q-2\}$ be an integer satisfying $k\equiv r\bmod q-1$. For $a=0$, we see from (\ref{EqnD11}) that the assumptions of Theorem 
	\ref{ThmMainIntro} are satisfied by
	$n=1$. Then, for any integer $s\geq 1$, we obtain a Hecke eigenform of slope zero
	\[
	F_{k'}\in S_{k'}(\Gamma_1(t)),\quad k'=k+(q+1-r)q^{s}
	\]
	such that, with the fixed embedding $\iota_t:\bar{K}\to \bC_t$, we have
	\[
	\iota_t(\lambda_Q(F_{k'}))\equiv \iota_t(\lambda_Q(F_{k})) \bmod t^{q^s-p}\quad \text{ for any }Q.
	\]
	Since $k'\geq 3$, $k'\equiv 2 \bmod q-1$ and $d(k',0)=1$, we see that $F_{k'}$ is a scalar multiple of $f^{(t)}_{k',1}$ and thus $\lambda_Q(F_{k'})=1$. 
	Since 
	$s$ 
	is 
	arbitrary, this implies $\lambda_Q(F_k)=1$.
	\end{proof}

\begin{cor}\label{CorHidaTriv}
	Let $k\geq 2$ and $r\geq 1$ be any integers. Then there exists a unique character $\chi:\kappa(\wp)^\times\to \kappa(\wp)^\times$ satisfying 
	$d(\Gamma_0^p(t^r),k,\chi,0)\neq 0$. For such $\chi$, we have $d(\Gamma_0^p(t^r),k,\chi,0)=1$ and any Hecke eigenform $F$ of slope zero in 
	$S_k(\Gamma_0^p(t^r))(\chi)$ 
	satisfies 
	$
	\lambda_Q(F)=1$ for any $Q$.
	\end{cor}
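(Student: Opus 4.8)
The plan is to reduce everything to the case $r=1$, where $\Gamma_0^p(t)=\Gamma_1(t)$ and the slope-zero theory is already understood. First I would record that, because $\wp=t$ has degree one, $A_1=\bF_q$ and $1+tA_1=\{1\}$, so $\Gamma_0^p(t)=\Gamma_1(t)$; hence $d(\Gamma_0^p(t),k,0)=d(\Gamma_1(t),k,0)=1$ as recalled at the start of \S\ref{SubsecEx}. By Proposition \ref{PropHida} the quantities $d(\Gamma_0^p(t^r),k,0)$ and $d(\Gamma_0^p(t^r),k,\chi,0)$ are all independent of $r$, so $d(\Gamma_0^p(t^r),k,0)=1$ for every $r$, and since $\sum_\chi d(\Gamma_0^p(t^r),k,\chi,0)=1$, at most one character contributes.

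Next I would produce the contributing character explicitly. Let $F\in S_k(\Gamma_1(t))$ be the slope-zero eigenform, unique up to scalar. As the diamond operators commute with $U$ and preserve the one-dimensional slope-zero space, $F$ lies in a single $\chi$-part $S_k(\Gamma_1(t))(\chi)$, and by Proposition \ref{PropOrdt} we have $\lambda_Q(F)=1$ for all $Q$. This pins down $\chi$ as the unique character with $d(\Gamma_1(t),k,\chi,0)=1$, and Proposition \ref{PropHida} transports the equalities $d(\Gamma_0^p(t^r),k,\chi,0)=1$ and $d(\Gamma_0^p(t^r),k,\chi',0)=0$ (for $\chi'\neq\chi$) to every $r$, giving existence and uniqueness of $\chi$ together with the dimension-one claim. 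Here $\chi$ is a character of the $r$-independent group $\kappa(\wp)^\times=\bF_q^\times$, and since $\langle\lambda\rangle_{t^r}$ may be computed with the constant matrix $\diag([\lambda]^{-1},[\lambda])\in \mathit{SL}_2(\bF_q)$ for every $r$, the nebentypus of $F$ is unchanged under the inclusions $S_k(\Gamma_0^p(t^s))\subseteq S_k(\Gamma_0^p(t^{s+1}))$.

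For the eigenvalue assertion at level $t^r$ I would show that the image of $F$ under the natural inclusion $\iota$ of Proposition \ref{PropHida} is a Hecke eigenform with all eigenvalues equal to one. For $Q$ prime to $t$ this follows from the explicit formula of \S\ref{SubsecDCG}: the sum $\tfrac1Q\sum_{\beta\in R_Q}F((z+\beta)/Q)$ is literally the same at every level, and $\langle Q\rangle_{t^r}$ coincides with $\langle Q\bmod t\rangle_{t^r}$, acting on the $\chi$-part by the scalar $\chi(Q\bmod t)$, which is independent of $r$; hence $T_QF$ computed at level $t^r$ coincides with $T_QF$ computed at level $t$, which is $F$ by Proposition \ref{PropOrdt}. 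For $Q=t$, i.e. for $U$, the inclusion $\iota$ commutes with $U$ by the commutative diagram in the proof of Proposition \ref{PropHida}, so $\iota(F)$ is again a $U$-eigenform with eigenvalue $\lambda_t(F)=1$. Thus $\iota(F)$ is a nonzero slope-zero Hecke eigenform in $S_k(\Gamma_0^p(t^r))(\chi)$ with $\lambda_Q=1$ for all $Q$; by the dimension-one statement already proved, every slope-zero Hecke eigenform in this space is a scalar multiple of it, and the corollary follows.

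The main obstacle is the comparison of the $U$-operator across levels: because $t$ divides the level, simply viewing $F$ at a higher level does not obviously commute with $U=T_t$, and this is exactly the content of the up-and-down ($\iota$, $s$) argument of Proposition \ref{PropHida}. Everything else reduces to the prime-to-$t$ Hecke formula and the bookkeeping of nebentypus characters, which are routine.
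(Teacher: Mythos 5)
Your proposal is correct and follows essentially the same route as the paper: Proposition \ref{PropHida} gives $d(\Gamma_0^p(t^r),k,0)=d(\Gamma_1(t),k,0)=1$, the decomposition $\sum_\chi d(\Gamma_0^p(t^r),k,\chi,0)=1$ yields existence, uniqueness and the dimension-one claim, and the eigenvalue assertion follows from Proposition \ref{PropOrdt} together with the Hecke-compatibility of the inclusion $S_k(\Gamma_1(t))\to S_k(\Gamma_0^p(t^r))$. You merely spell out in more detail (prime-to-$t$ coset formula, and the $\iota$--$U$ diagram for $Q=t$) what the paper compresses into the phrase "compatible with Hecke operators."
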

\begin{proof}
	Since $\Gamma_0^p(t)=\Gamma_1(t)$, Proposition \ref{PropHida} implies $d(\Gamma_0^p(t^r),k,0)=1$. Since we have
	\[
	d(\Gamma_0^p(t^r),k,0)=\sum_\chi d(\Gamma_0^p(t^r),k,\chi,0),
	\]
	the uniqueness of $\chi$ and the assertion on the dimension follow. Let $F_k$ be any Hecke eigenform of slope zero in $S_k(\Gamma_1(t))$. Since the natural 
	inclusion $S_k(\Gamma_1(t))\to S_k(\Gamma_0^p(t^r))$ is compatible with Hecke operators, $F$ is a scalar multiple of the image of $F_k$. Hence the last 
	assertion follows from Proposition \ref{PropOrdt}.
	\end{proof}


\begin{rmk}\label{RmkHida}
	Note that, since the only $p$-power root of unity in $\bC_\wp$ is one, there exists no non-trivial finite order character 
	$1+\wp\cO_{K_\wp}\to \bC_{\wp}^\times$. Thus it seems to the author that, if we try to generalize Hida theory including \cite[\S7.3, Theorem 3]{Hida_E}
	to Drinfeld cuspforms of level $\Gamma_1(t^r)$, then it would be natural to restrict ourselves to those of level $\Gamma_0^p(t^r)$. However, 
	Corollary \ref{CorHidaTriv} shows that such a generalization is trivial.
	\end{rmk}


\subsubsection{Slope one forms}

Let us consider the case $p=q=3$ and $a=1$. Since $D(1,1,1)=\sqrt{6}-\frac{1}{2}=1.949\ldots$, the assumptions of Theorem \ref{ThmMainIntro} are 
satisfied by $k\geq 3$ and $n=1$. Then a computation using \cite[(17)]{BV0} shows $d(10,1)=1$. 
Let $G_{10}$ and $G_{19}$ be any non-zero Drinfeld cuspforms of level $\Gamma_1(t)$ and slope one in weights $10$ and $19$, respectively.
Then Theorem \ref{ThmMainIntro} gives
\begin{equation}\label{EqnEx1}
v_t(\iota_t(\lambda_Q(G_{10})-\lambda_Q(G_{19})))>5
\end{equation}
for any $Q$.

For $Q=t$, using \cite[(17)]{BV0} we can show that $\lambda_t(G_{10})=-t-t^3$, and $\lambda_t(G_{19})$ is a root of the polynomial
\begin{align*}
X^4&+(t+t^3)X^3+(-t^{8}+t^{10}+t^{12}+t^{14}+t^{16})X^2\\
&+(-t^{9}-t^{11}+t^{13}+t^{15}+t^{17}+t^{19})X+(-t^{18}-t^{20}+t^{24}+t^{26}+t^{28})
\end{align*}
(see also \cite{Val_Table}).
Put $\iota_t(\lambda_t(G_{19}))=ty$ with $v_t(y)=0$. Then we obtain $y^3(y+1+t^2)\equiv 0\bmod t^6$ and $\iota_t(\lambda_t(G_{10}))\equiv 
\iota_t(\lambda_t(G_{19}))\bmod t^7$, 
which satisfies (\ref{EqnEx1}).
In fact, plugging in $X=-t-t^3+Z$ to the polynomial above yields $v_t(\iota_t(\lambda_t(G_{10})-\lambda_t(G_{19})))=9$. 

We identify $S_k(\Gamma_1(t))$ with $\bC_\infty^{k-1}$ via the ordered basis
\[
\{\mathbf{c}_j(\gamma_0)=\mathbf{c}_j(\bar{e})\mid 0\leq j\leq k-2\}
\]
defined in \cite{LM,BV0}. Then $G_{10}$ is identified with the vector
\[
{}^t\!\left(0,1+t^2,0,-(1+t^2),0,-t^2,0,1,0\right).
\]
Thus $\lambda_{1+t}(G_{10})$ agrees with the evaluation $T_{1+t}(G_{10})(\gamma_0)(X^7Y)$ after identifying $G_{10}$ with a harmonic cocycle.
By \cite[(7.1)]{LM}, we have $\lambda_{1+t}(G_{10})=1-t-t^3$. On the other hand, by computing the characteristic polynomial of $T_{1+t}$ 
acting on $S_{19}(\Gamma_1(t))$ using \cite[(7.1)]{LM} and plugging in $X=1-t-t^3+Z$ into it, (\ref{EqnEx1}) implies $v_t(\iota_t(\lambda_{1+t}(G_{10})-
\lambda_{1+t}(G_{19})))=9$. 

Note that, since these eigenvalues 
are not powers of $t$ or $1+t$, the Hecke eigenforms $G_{10}$ and $G_{19}$ are not the $t$-stabilizations of Hecke eigenforms with $A$-expansion. 



\end{document}